\title{a topological origin of quantum symmetric pairs}
\author{T.A.N. Weelinck} \address{School of Mathematics\\University of Edinburgh\\ Edinburgh, UK}\email{T.A.N.Weelinck@ed.ac.uk}
\let\oldtocsection=\tocsection
\let\oldtocsubsection=\tocsubsection
\renewcommand{\tocsection}[2]{\hspace{0em}\oldtocsection{#1}{#2}}
\renewcommand{\tocsubsection}[2]{\hspace{2em}\oldtocsubsection{#1}{#2}}
\theoremstyle{plain}
\newtheorem{thm}{Theorem}[section]
\numberwithin{equation}{thm}
\newtheorem{prop}[thm]{Proposition}
\newtheorem{lem}[thm]{Lemma}
\newtheorem{cor}[thm]{Corollary}
\newtheorem{defprop}[thm]{Definition-Proposition}
\theoremstyle{definition}
\newtheorem{defn}[thm]{Definition}
\newtheorem{ex}[thm]{Example}
\newtheorem{rmk}[thm]{Remark}
\theoremstyle{remark}
\newtheorem*{conv}{Convention}
\newtheorem*{nota}{Notation}
\newcommand{\R}{\mathbb{R}}
\newcommand{\Z}{\mathbb{Z}}
\newcommand{\kk}{\mathbbm{k}}
\newcommand{\vect}{\mathbf{Vect}}
\newcommand{\fdvect}{\vect_{\mathrm{f.d.}}}
\newcommand{\catC}{\mathcal{C}}
\newcommand{\catA}{\mathcal{A}}
\newcommand{\catB}{\mathcal{B}}
\newcommand{\catD}{\mathcal{D}}
\newcommand{\DD}{\mathbb{D}}
\newcommand{\catM}{\mathcal{M}}
\newcommand{\catO}{\mathcal{O}}
\newcommand{\Rex}{\mathbf{Rex}}
\newcommand{\act}{\mathrm{act}}
\newcommand{\ztd}{\Z_2\mathcal{D}}
\DeclareSymbolFont{bbold}{U}{bbold}{m}{n}
\DeclareSymbolFontAlphabet{\mathbbold}{bbold}
\newcommand{\1}{\mathbbold{1}} 
\newcommand{\bcyl}{B^{\mathrm{cyl}}}
\newcommand{\g}{\mathfrak{g}}
\newcommand{\gl}{\mathfrak{g}\mathfrak{l}}
\newcommand{\U}{\mathcal{U}}
\newcommand{\B}{\mathcal{B}}
\newcommand{\rhob}{\overline{R}_2}
\newcommand{\lambdab}{\overline{L}_2}
\newcommand{\bL}{\overline{L}}
\newcommand{\bR}{\overline{R}}
\newcommand{\bep}{\overline{\ep}}
\newcommand{\etab}{\overline{\eta}}
\newcommand{\uqg}{\U_q(\g)}
\newcommand{\uqgt}{\U_{(q,\textbf{c})} (\g^{\theta})}
\newcommand{\oqg}{\mathcal{O}_q(G)}
\newcommand{\tensor}{\otimes}
\newcommand{\tensop}{\otimes\text{-op}}
\newcommand{\boxt}{\boxtimes}
\newcommand{\hcomp}{\; * \;}
\newcommand{\ep}{\varepsilon}
\newcommand{\Hom}{\mathrm{Hom}}
\newcommand{\Map}{\mathrm{Map}}
\newcommand{\Fun}{\mathrm{Fun}}
\newcommand{\id}{\mathrm{id}}
\newcommand{\fdmod}{\text{-}\mathrm{mod}_{\mathrm{f.d.}}}
\newcommand{\Mod}{\text{-}\mathrm{mod}}
\newcommand{\tr}{\mathrm{tr}}
\newcommand{\nerve}{\mathrm{N}}
\newcommand{\Fin}{\mathrm{Fin}}
\newcommand{\pt}{\textrm{pt}}
\begin{document}

\begin{abstract}
It is well known that braided monoidal categories are the categorical algebras of the little two-dimensional disks operad.
We introduce involutive little disks operads, which are $\Z_2$-orbifold versions of the little disks operads. 
We classify their categorical algebras and describe these explicitly in terms of a finite list of functors, natural isomorphisms and coherence equations.
In dimension two, the categorical algebras are braided monoidal categories with an anti-involution together with a pointed module category carrying a universal solution to the (twisted) reflection equation. 
Main examples are obtained from the categories of representations of a ribbon Hopf algebra with an involution and a quasi-triangular coideal subalgebra, such as a quantum group and a quantum symmetric pair coideal subalgebra.
\end{abstract}

\maketitle
\tableofcontents

\section{Introduction}

In this paper we introduce and study the \emph{operad of involutive little disks}. We show that its operations naturally encode the symmetries of quantum groups and quantum symmetric pairs. 
Quantum groups are of course well-studied and known to have myriad  connections to other parts of mathematics. 
Only in recent years, however, have quantum symmetric pairs come to the forefront of research on quantum groups. 

A quantum symmetric pair consists of a quantum group together with a subalgebra that quantizes the subgroup of fixed-points of some given involution.
In the 90s, examples of quantum symmetric pairs were constructed on a case by case basis by M. Noumi, T. Sugitani and M. Dijkhuizen using solutions to the \emph{reflection equation} \cite{noumi,nousu95,nds}. 
This equation, sometimes called the `boundary Yang-Baxter equation', was introduced by I. Cherednik in the context of particle scattering on a half line \cite{cherednik84} and E. Sklyanin's study of quantum integrable systems with non-periodic boundary conditions \cite{sklyanin88,sklyanin92}.
The motivation of the work of Noumi-Sugitani-Dijkhuizen was to study $q$-orthogonal polynomials (e.g. Askey-Wilson polynomials \cite{askeywilson}, Macdonald polynomials \cite{macdonald01} and Koornwinder polynomials \cite{koornwinderpoly}) as zonal spherical functions of quantum symmetric spaces; this field is sometimes called `quantum harmonic analysis' \cite{koornwinder89,koornwinder91,dijkhuizen96}.
A complete classification of quantum symmetric pairs was later achieved by G. Letzter \cite{letzter99,letzter02,letzter03}. 
S. Kolb then extended the theory of quantum symmetric pairs to include Kac-Moody algebras, with examples such as twisted $q$-Yangians and $q$-Onsager algebras \cite{kolb14}.

In recent years the reflection equations appeared in a more methodical way in the theory of quantum symmetric pairs.
S. Kolb and J. Stokman showed that an invertible solution to a type $A$ reflection equation is exactly a character of the reflection equation algebra $\mathcal{O}_q (\mathrm{SL}_n)$. From such a character they construct a quantum symmetric pair, streamlining the approach of Noumi-Sugitani-Dijkhuizen \cite{kolbstokman}. 
Furthermore, M. Balagovic and S. Kolb showed that any pair in Letzter's classification carries a canonical solution to a reflection equation, called a \emph{universal $K$-matrix} \cite{balakolb16,kolb17}. 
Such solutions can be used for applications to low-dimensional topology in the spirit of the celebrated Reshetikhin-Turaev knot invariants \cite{td98,tdho98}. 

A categorical approach to the reflection equations, as well as Letzter's classification of quantum symmetric pairs, allowed for applications to geometric representation theory.
D. Jordan and X. Ma used quantum symmetric pairs to construct representations of the double affine braid group and of the double affine Hecke algebra of type $C^{\vee}C_n$ \cite{jordanma}.
M. Ehrig and C. Stroppel showed that translation functors in parabolic category $\mathcal{O}$ in type $D$ categorify the action of a quantum symmetric pair \cite{es13}. 
Work by H. Bao and W. Wang on type $AIII/AIV$ quantum symmetric pairs, see Table \ref{table:araki}, shows such quantum symmetric pairs admit a canonical basis \cite{bw16}. Furthermore, they have a geometric interpretation through the geometry of partial flag varieties of type B/C \cite{bsww16}, and are Schur-Weyl dual to Hecke algebras of type B \cite{bww16}.

One of the fundamental properties of the Drinfeld-Jimbo quantum group is that it is a quasi-triangular Hopf algebra: it has a universal $R$-matrix providing solutions to the Yang-Baxter equation.
We can rephrase this by saying that the category of finite dimensional modules of a quantum group is a braided monoidal category.
In this paper we introduce a categorical framework, called a $\Z_2$-braided pair, which exactly encodes the structure of the universal $K$-matrix of a quantum symmetric pair; analogous to how a braided monoidal category encodes the $R$-matrix.

It is well known that braided monoidal categories are exactly the categorical algebras over the so-called \emph{$E_2$-operad} \cite{fiedorowicz,dunn97,wahl01}.\footnote{See \cite[Example 5.1.2.4]{lha} for a modern version of this result phrased in the terminology of higher categories and operads.}
Recall that the $E_2$-operad, or operad of little two-dimensional disks, is a topological operad whose operations are parametrized by the different embeddings of a disjoint union of disks into a larger disk.
In the identification of categorical $E_2$-algebras with braided monoidal categories the braiding (i.e. the universal $R$-matrix) is interpreted as an operation rotating two disks in the plane.
In this paper we propose an interpretation for the universal $K$-matrix of Balagovic and Kolb in terms of $\Z_2$-orbifold disks rotating around each other, see Figure \ref{fig:tre}.
This naturally leads us to consider an operadic perspective on quantum symmetric pairs in which the $E_2$-operad is replaced by an $\Z_2$-orbifold analogue.
We therefore introduce the $\ztd_2$-operad of involutive little two-dimensional disks, and study its categorical algebras.
Our main result is that $\Z_2$-braided pairs are exactly the categorical $\ztd_2$-algebras.

Recall that an $E_2$-algebra $A$ can be used to construct invariants of surfaces by computing the so-called \emph{factorization homology of the surface with coefficients in $A$} \cite{lha,af15}.
In \cite{bzbj,bzbj2}, the authors compute the surface invariants of factorization homology with coefficients in the braided monoidal category of quantum group representations. 
These invariants produce quantizations of the \emph{character variety} of the surfaces  (the moduli space of local systems on that surface). 
In \S \ref{intro:outlook} we explain forthcoming work in which we compute the factorization homology of $\Z_2$-orbifold surfaces with coefficients in the $\Z_2$-braided pair of representations of a quantum symmetric pair.
Thereby, we will construct invariants of surfaces $\Sigma$ with an involution and actions of the associated orbifold braid groups $B_n[\Sigma/\Z_2]$.
We expect that these invariants give quantizations of the character varieties of orbifold surfaces with isolated $\Z_2$-singularities.

\subsection{Summary of results}
Central to our results is the notion of a $\Z_2$-braided pair. A $\Z_2$-braided pairs consists of a braided monoidal category, a module category and some extra structure which we now recount.
See \S\ref{sec:ztd2} for the definitions of braided monoidal category and module category.
\begin{defn} (Definition \ref{z2pair}) \label{intro:pdef}
	\begin{enumerate}
		\item A \emph{$\Z_2$-monoidal pair} consists of a monoidal category $\catA$ endowed with an anti-monoidal involution $\Phi: \catA \rightarrow \catA^{\tensop}$, t: $\Phi^2 \simeq \id_\catA$, together with a module category $\catM$ over $\catA$ and a distinguished object $\1_\catM \in \catM$. 
		\item A \emph{$\Z_2$-braided pair} consists of a $\Z_2$-monoidal pair  $(\catA,\catM)$ together with a braiding $\sigma: \tensor \simeq \tensor^{op}$ on $\catA$ such that $\Phi$ is braided, as well as a family of natural isomorphisms $\kappa_{M,X}: M\tensor X \rightarrow M \tensor \Phi(X)$ for all $M \in \catM, X \in \catA$ satisfying the coherence diagrams of Figures \ref{fig:bp1} and \ref{fig:bp2}.
		\item A \emph{$\Z_2$-symmetric pair} is a $\Z_2$-braided pair such that $\sigma^2 = \id$ and $\kappa^2 = \id$.
	\end{enumerate}
\end{defn}
We will call the natural isomorphism $\kappa$ the \emph{$\Z_2$-cylinder braiding} as its topological interpretation is the braid around the singular point in the orbifold cylinder $[D^2/\Z_2]$ (see Figure \ref{fig:z2monodromy}).  
Crucially, Definition \ref{intro:pdef} implies that the $\Z_2$-cylinder braiding $\kappa$ and the braiding $\sigma$ satisfy the $\Phi$-twisted reflection equation \eqref{eq:tre}. 
\begin{rmk}
	The notion of a $\Z_2$-braided pair is very close to the notion of a $\Phi$-braided module category of \cite[Remark 3.15]{kolb17} and braided module category of \cite{brochier13}. 
	However, in the definition of $\Z_2$-braided pair we had to impose additional requirements to match the topology of the $\ztd_2$-operad. For a precise comparison see Remarks \ref{rmk:comparison1} and \ref{rmk:comparison2}. 
\end{rmk}
Our main result interprets the axioms in Definition \ref{intro:pdef} via topological operads $\ztd_n$.
These operads have two colours $\DD$ and $\DD_*$, that represent the free quotient $(D^n \amalg D^n) /\Z_2$ where $\Z_2$ swaps the disks, and the orbifold quotient $D^n/\Z_2$ where $\Z_2$ acts by rotations (see  Figure \ref{fig:danddstar}).
The operations in $\ztd_n$ are parametrized by equivariant embeddings of disjoint unions of the orbifold disks $\DD$ and $\DD_*$ into each other.
Consider a categorical $\ztd_n$-algebra $(\catA,\catM)$, where $\catA$ corresponds to $\DD$ and $\catM$ corresponds to $\DD_*$. 
Disjoint unions of the disk $\DD$ embed into $\DD$, which determines a multiplication structure on $\catA$. Moreover, $\DD$ embeds into $\DD_*$, but not conversely. This determines an $\catA$-module structure on $\catM$. Our main result is the following classification, relating the categorical $\ztd_n$-algebras to Definition \ref{intro:pdef}.
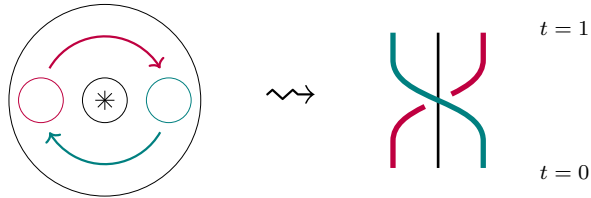
\begin{figure}[h]
	$\vcenter{\hbox{\adjustbox{scale=.85}{\begin{tikzpicture}
				\draw[black] (-.15,0) -- (0.15,0);
				\draw[black] (0,-.15) -- (0,.15);
				\draw[black] (-.11,-.11) -- (0.11,0.11);
				\draw[black] (0.11,-.11) -- (-0.11,0.11);
				\draw[teal] (1,0) circle [radius=.35];
				\draw[purple] (-1,0) circle [radius=.35];
				\draw[black] (0,0) circle [radius = 1.5];
				\draw[black] (0,0) circle [radius =.35];
				\draw [->,line width=1pt,teal] ([shift=(-30:1)]0,0) arc (-30:-150:1);
				\draw [->,line width=1pt,purple] ([shift=(150:1)]0,0) arc (150:30:1);
				\end{tikzpicture}}}}\quad  \quad  \mathlarger{\mathlarger{\mathlarger{\mathlarger{\leadsto}}}} \quad \quad
	\vcenter{\hbox{
			\begin{tikzpicture}[scale=1.2]
			\draw[black,line width=1pt] (1.5,-1.5) -- (1.5,0);
			\braid[line width = 2pt, 
			style strands={1}{teal}, 
			style strands={2}{purple}]
			s_1;
			\node at (2.9,-1.55) {\tiny $t=0$};
			\node at (2.9,0.05) {\tiny $t=1$};
			\end{tikzpicture}}}$
	\caption{(Left) The isotopy of operations in $\ztd_2$ corresponding to the $\Z_2$-cylinder braiding $\kappa$. The operations are embeddings of the form $\DD \sqcup \DD_* \rightarrow \DD_*$. 
		(Right) A coloured braid encoding the isotopy up to homotopy.}
	\label{fig:z2monodromy}
\end{figure}
\begin{thm} (Theorem \ref{thm:equivalences}) \label{intro:maintheorem} Let $\mathbf{Cat}$ denote the 2-category of categories. 
	\begin{enumerate} 
		\item A $\ztd_1$-algebra in $\mathbf{Cat}$ is exactly a $\Z_2$-monoidal pair.
		\item A $\ztd_2$-algebra in $\mathbf{Cat}$ is exactly a $\Z_2$-braided pair. 
		\item Let $n\geq3$. A $\ztd_n$-algebra in $\mathbf{Cat}$ is exactly a $\Z_2$-symmetric pair. 
	\end{enumerate}
\end{thm}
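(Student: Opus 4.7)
The plan is to establish an equivalence of 2-categories, for each $n$, between $\ztd_n$-algebras in $\mathbf{Cat}$ and the $\Z_2$-pair structures of Definition \ref{intro:pdef}, by extracting a generators-and-relations presentation of $\ztd_n$ from the topology of its spaces of $\Z_2$-equivariant orbifold embeddings.

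The first step is to analyze the homotopy types of the operation spaces. Because $\DD_*$ has a single orbifold fixed point while $\DD$ has none, an equivariant embedding forces every source $\DD_*$ to land at the orbifold point of a target $\DD_*$ and prevents $\DD_*$ from appearing in the source whenever the output is $\DD$; in particular at most one $\DD_*$ may appear in any source list. Beyond this, the unfolding of equivariance identifies $\ztd_n(k\cdot\DD;\DD)$ with configurations of $k$ labelled $\Z_2$-pairs of disks in $D^n$ (equivalently $k$ ordinary disks together with a $\Z_2$ sheet-choice per source), and identifies $\ztd_n(k\cdot\DD\sqcup\DD_*;\DD_*)$ with configurations of $k$ such pairs in the punctured orbifold disk $(D^n\setminus\{0\})/\Z_2$. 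For $n=2$ the resulting spaces are $K(\pi,1)$'s for (extensions of) the ordinary braid group and the type $B$ annular braid group respectively; for $n\geq 3$ the only $\pi_0$ information is the symmetric-group action together with the sheet-choices.

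The next step is to read off the generating operations and match them with the structure of Definition \ref{intro:pdef}. The binary operation $\DD\sqcup\DD\to\DD$ yields the monoidal product on $\catA$; the nontrivial component of $\ztd_n(\DD;\DD)$, corresponding to a swap of sheets, provides the involution $\Phi$, and the comparison of a multiplication composed with a swap against the reverse order gives its anti-monoidal property. The operation $\DD\sqcup\DD_*\to\DD_*$ yields the $\catA$-action on $\catM$, and the base object $\1_\catM$ is recovered from the nullary operation into $\DD_*$. For $n\geq 2$, the loop in $\ztd_n(2\cdot\DD;\DD)$ rotating two disks produces the braiding $\sigma$, and its interaction with a swap forces $\Phi$ to be a braided functor; the loop of Figure \ref{fig:z2monodromy}, in which a $\DD$-disk orbits the orbifold point of $\DD_*$, produces the cylinder braiding $\kappa$. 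The coherence diagrams of Figures \ref{fig:bp1} and \ref{fig:bp2}, and the $\Phi$-twisted reflection equation in particular, are then identified with the images under the algebra of explicit homotopies between composite generators in small operation spaces. For $n\geq 3$, each of the generating loops contracts, forcing $\sigma^2 = \id$ and $\kappa^2 = \id$.

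The hard part is establishing \emph{completeness} of this presentation: showing that the listed generators and relations present the fundamental groupoid of each operation space on the nose, not just up to an unwanted quotient. For $n=2$ this amounts to pinning down a presentation of the type $B$ annular braid groups and carefully tracking the sheet-choice $\Z_2$-factors under operadic composition and symmetric-group reindexing of sources, a delicate but essentially combinatorial exercise. For $n\geq 3$ the analogous statement becomes a connectivity argument. Once the presentation is in place, the inverse functor from $\Z_2$-pairs to $\ztd_n$-algebras is defined by sending each generating operation to the corresponding categorical datum and checking that each relation is satisfied; the equivalence of 2-categories then follows by a formal coherence argument.
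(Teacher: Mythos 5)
Your outline correctly identifies the generators and relations coming out of the topology of the operation spaces, and the matching of the binary embedding to $\tensor$, the sheet swap to $\Phi$, the punctured-disk loop to $\kappa$, the nullary operation into $\DD_*$ to $\1_\catM$, and the twisted reflection equation to a $2$-isotopy is the right dictionary. It also correctly observes the stabilisation $\sigma^2 = \id$, $\kappa^2=\id$ for $n\geq 3$. Two points in the proposal, however, misjudge where the real work lies.

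First, you flag ``completeness of the presentation'' of the fundamental groupoids as the hard part and propose to verify it as a combinatorial exercise. In the paper this is essentially off the shelf: the operation spaces are shown to be homotopy equivalent to orbifold configuration spaces (Proposition \ref{htype}), which reduce to the classical $K(B_k,1)$ and $K(\bcyl_k,1)$ (Lemma \ref{kpi1}), and the Brieskorn presentation of $\bcyl_k$ (Proposition \ref{prop:bcyl}) is cited, not reproved. So that step is not the bottleneck.

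Second, and this is the genuine gap, you dismiss the passage from generators-and-relations on the operadic side to the categorical structure as ``a formal coherence argument.'' It is not formal. To build the assignment $(\catA,\catM)\mapsto F_{(\catA,\catM)}$ you must assign to each isotopy of operations a structural natural isomorphism, and the assignment is only well-defined if there is \emph{at most one} structural isomorphism between two given functors with a prescribed underlying (cylinder) braid. That uniqueness statement is exactly Theorem \ref{braidcoh}, and proving it is the bulk of Section \ref{sec:coherence}: one strictifies the tensor product via $\Fun_\catA(\catA_\catA,\catA_\catA)$, strictifies the anti-involution via the ``unfolded'' category $\catA^{\Z_2}$ (a construction adapted from \cite{calindo16} and not implied by ordinary Mac Lane/Joyal--Street coherence, since $\Phi$ is anti-monoidal and involutive), strictifies the module category via $\Fun_\catA(\catA,\catM)$, transports $\kappa$ along all of these, and only then invokes the strict coherence result Proposition \ref{strz2coh}, which itself leans on axioms BP1/BP2 and Lemma \ref{relem}. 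Without this your construction of the inverse functor, and hence of the equivalence, does not get off the ground. A further, smaller, omission is that to even state what a $\ztd_n$-algebra in a $2$-category \emph{is} one needs the $\infty$-operadic framework (Section \ref{sec:genalg}); the proposal implicitly assumes this has been unpacked (which the paper does in Definition-Proposition \ref{rexalgdef}, using $3$-coskeletality of the nerve of a $2$-category), but it is part of the theorem, not background.

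Apart from misplacing the difficulty, the overall route — topology determines generators, generators match Definition \ref{intro:pdef}, build functors both ways — is the same as the paper's.
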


\begin{rmk}
	We stated the main theorem in terms of $\mathbf{Cat}$ for simplicity.
	We prove the main theorem for $\kk$-linear categorical algebras since $\kk$-linear categories provide a more natural setting for representation theory, see e.g. \cite{egno15}.
	However, our proofs are readily adaptable to \textbf{Cat}, where they yield the above result.
\end{rmk}

\begin{conv}
	We refer to plain categories with a tensor product as \emph{monoidal categories}, whereas we refer to $\kk$-linear categories with a tensor product as \emph{tensor categories}.\footnote{To be precise we moreover assume a tensor category is finitely cocomplete, and that the tensor product functor is $\kk$-bilinear and preserves finite colimits in each variable, see Section \ref{subsec:tensorcat}.}
\end{conv}

\begin{rmk}
	We use $\infty$-categories to define $\ztd_n$-algebras in general. 
	Since \textbf{Cat} is a only 2-category we will delay discussing the $\infty$-categorical foundations to \S \ref{sec:genalg}. 
	The reader only interested in applications to representation theory can safely avoid reading that section. 
\end{rmk}

\begin{rmk} Recall that for $n\geq 3$ a categorical $E_n$-algebra is the same thing as a categorical $E_\infty$-algebra (i.e. a symmetric monoidal category). 
	This is an instance of the Baez-Dolan stabilisation hypothesis \cite{bd95}.
	As one would expect from the stabilisation hypothesis, categorical $\ztd_n$-algebras also stabilise when $n\geq 3$. 
\end{rmk}

All our examples of $\Z_2$-braided pairs are obtained from the theory of Hopf algebras. 
Namely, the categories of representations of  quasi-triangular Hopf algebras provide important examples of braided monoidal categories. 
A standard source of module categories are then the coideal subalgebras of the Hopf algebra. Recall that a \emph{coideal subalgebra} $B$ of a Hopf algebra $H$ is a subalgebra $B \subset H$ for which $\Delta(B) \subset B \tensor H$. 
Following \cite{kolb17}, we call a coideal subalgebra \emph{quasi-triangular} if it has a universal $K$-matrix, see Definition \ref{qtqsp}.

\begin{prop} \label{intro:examples} (Proposition \ref{prop:ztd2ex} see also \cite[Remark 4.13]{balakolb16} and \cite[\S 2.2]{kolb17}.)
	Let $H$ be a ribbon Hopf algebra with involution $\phi: H \rightarrow H$ and let $B$ be a quasi-triangular coideal subalgebra of $H$. 
	Then $(H\fdmod,B\fdmod)$ naturally defines a $\Z_2$-braided pair. 
\end{prop}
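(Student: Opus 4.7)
The plan is to produce each ingredient of Definition \ref{intro:pdef} by unwinding standard Hopf-algebraic data, then verify the coherence diagrams by translating the defining axioms of the universal $K$-matrix into categorical equalities.

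First I would build the $\Z_2$-monoidal pair $(\catA,\catM)$ underlying the claim. Set $\catA = H\fdmod$ with its usual tensor structure coming from $\Delta : H \to H \tensor H$; the anti-monoidal involution $\Phi$ is the pullback functor $\phi^*$, whose anti-monoidality $\Phi(X \tensor Y) \cong \Phi(Y) \tensor \Phi(X)$ is a direct reformulation of the fact that $\phi$ satisfies $\Delta \circ \phi = (\phi \tensor \phi) \circ \Delta^{op}$, and whose square is naturally isomorphic to $\id$ because $\phi^2 = \id$. For $\catM = B\fdmod$, the coideal condition $\Delta(B) \subset B \tensor H$ provides a canonical algebra map $B \to B \tensor H$, which is exactly the data required to define the $\catA$-module structure $M \tensor X$ on $B\fdmod$. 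The distinguished object $\1_\catM \in \catM$ is the trivial $B$-module $\kk$ defined by the restriction $\ep|_B : B \to \kk$ of the counit, which is a well-defined algebra map since $B \subset H$ is a subalgebra.

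Next I would install the braided structure. The braiding $\sigma$ on $\catA$ is the one coming from the universal $R$-matrix of $H$; the ribbon element enters only insofar as it pins down a canonical twist (and hence the extra rigidity we expect from Figures \ref{fig:bp1}--\ref{fig:bp2} which must be compatible with the anti-monoidal structure). That $\Phi$ is a braided functor is the statement $(\phi \tensor \phi)(R) = R_{21}$, which is the standard hypothesis in the $K$-matrix literature (see e.g. \cite{kolb17}) and is already encoded in the notion of ribbon Hopf algebra \emph{with involution}. The $\Z_2$-cylinder braiding is then defined using the universal $K$-matrix $\ka \in H$ as
\[
\kappa_{M,X}(m \tensor x) \;=\; m \cdot \ka_{(1)} \tensor \ka_{(2)} \cdot \phi(x),
\]
where $\ka_{(1)} \tensor \ka_{(2)}$ is Sweedler-like notation for the tensor factors of $\ka$ viewed in an appropriate completion. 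Naturality in $M$ and $X$ and the fact that this map lands in $M \tensor \Phi(X)$ are both direct consequences of the coideal and universal $K$-matrix axioms.

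Finally I would verify the coherence diagrams of Figures \ref{fig:bp1} and \ref{fig:bp2}. The key point is that each of these diagrams, once evaluated on pure tensors $m \tensor x \tensor y$ (or $m \tensor x$), reduces to one of the defining identities for a universal $K$-matrix in the sense of Balagovic--Kolb and Kolb \cite{balakolb16,kolb17}: the compatibility of $\ka$ with the coproduct (which yields the diagram expressing $\kappa_{M,X \tensor Y}$ in terms of $\kappa_{M,X}$, $\kappa_{M,Y}$ and the braiding), the compatibility $(\ep \tensor \id)(\ka) = 1$ (which yields the unit axiom for $\kappa$), and the twisted reflection equation \eqref{eq:tre} itself (which yields the hexagon-like diagram for $\kappa$ and $\sigma$). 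The main obstacle is purely bookkeeping: one must keep careful track of how the anti-monoidal structure of $\Phi$ interacts with the braiding, as the signs and side-conventions in the reflection equation differ between sources; once conventions are fixed so that $\Phi$ is braided in the sense of the paper, every coherence relation follows from an axiom of $(H,\phi,B,\ka)$ after a routine but delicate rewriting.
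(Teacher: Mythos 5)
Your proposal misses the central construction of the paper's proof, and in the place where the paper does something nontrivial it substitutes an incorrect claim. In the paper, $\phi$ is assumed to be a \emph{Hopf algebra} involution, so it preserves the coproduct: $\Delta\circ\phi=(\phi\otimes\phi)\circ\Delta$. Consequently the pullback $\Phi=\phi^*$ is a (plain) monoidal functor $\catA\to\catA$ with trivial coherence data, not an anti-monoidal one. You instead assert that $\phi$ is an anti-coalgebra map ($\Delta\circ\phi=(\phi\otimes\phi)\circ\Delta^{op}$) and derive anti-monoidality from that --- but this is not the hypothesis, and it would be incompatible with $(\phi\otimes\phi)(R)=R$ being the stated condition (you also wrote $R_{21}$, which contradicts Definition \ref{qtqsp}). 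The paper's actual move is to \emph{manufacture} the anti-monoidal structure by setting $\Phi_2:=\sigma_{\Phi X,\Phi Y}$, so the coherence constraint for $\Phi:\catA\to\catA^{\otimes\text{-op}}$ is furnished by the braiding rather than by any property of $\phi$.

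This error propagates: you then say $\Phi^2\cong\id$ ``because $\phi^2=\id$,'' but with $\Phi_2=\sigma$ one computes $\Phi^2=(\id_\catA,\sigma^2,\id_\1)$, and the required $t:\Phi^2\Rightarrow\id$ must be a \emph{monoidal} isomorphism $(\id_\catA,\sigma^2)\cong(\id_\catA,\id)$. That does not come for free from $\phi^2=\id$; it is exactly what the balancing $\theta$ coming from the ribbon element supplies (Remark \ref{ribbonrmk}), and the compatibility $\Phi(t_X)=t_{\Phi(X)}$ further uses $\phi(\nu)=\nu$. Your remark that ``the ribbon element enters only insofar as it pins down a canonical twist'' therefore misreads where the ribbon hypothesis does its work: it is not background rigidity but the ingredient that makes $\Phi$ an anti-\emph{involution} at all. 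The remainder of your outline (module structure from the coideal condition, pointing from $\varepsilon|_B$, $\kappa$ from the $K$-matrix, and BP1/BP2 from the $K$-matrix axioms and twisted reflection equation) does match the paper's route, but the gap above is in the part that carries the actual content of Proposition \ref{prop:ztd2ex}.
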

The main results \cite[Corollary 9.6]{balakolb16} and \cite[Corollary 3.14]{kolb17} state that the quantum symmetric pair coideal subalgebra is quasi-triangular. As a corollary we then have:
\begin{cor}
	Any quantum symmetric pair together with the standard ribbon element of the quantum group satisfies the hypothesis of Proposition \ref{intro:examples}.
\end{cor}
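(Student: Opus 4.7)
The plan is simply to check the four ingredients required by Proposition~\ref{intro:examples} for the pair $(\uqg, \uqgt)$ one at a time, since the real content of the statement is already packaged in the cited theorems of Balagovic--Kolb and Kolb.

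First I would record that $H := \uqg$ is a ribbon Hopf algebra under its standard ribbon element: this is Drinfeld's construction from the quasi-triangular structure together with a choice of group-like square root of the distinguished group-like element, and is the standard ribbon element referenced in the statement. Second, I would produce the involution $\phi: \uqg \to \uqg$ from the Lie algebra involution $\theta$ that defines the symmetric pair; this quantum lift is built into Letzter's (and Kolb's Kac--Moody) construction and is a Hopf algebra morphism with $\phi^2 = \id$, which is exactly what the hypothesis of Proposition~\ref{intro:examples} requires. Third, by the very definition of a quantum symmetric pair coideal subalgebra, $B := \uqgt$ is a coideal subalgebra of $\uqg$, i.e.\ $\Delta(\uqgt) \subset \uqgt \tensor \uqg$.

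The only non-formal ingredient is the quasi-triangularity of $B$, and this is precisely what \cite[Corollary~9.6]{balakolb16} and \cite[Corollary~3.14]{kolb17} supply: they construct a universal $K$-matrix on $\uqgt$, which by Definition~\ref{qtqsp} is exactly the datum of a quasi-triangular structure on the coideal subalgebra. With all four ingredients in hand, Proposition~\ref{intro:examples} applies verbatim to $(\uqg\fdmod, \uqgt\fdmod)$ and produces the desired $\Z_2$-braided pair.

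The main obstacle is essentially non-existent at this stage of the paper, since all the hard work has been outsourced to the cited references; the only thing to be slightly careful about is bookkeeping, namely that the involution $\phi$ entering Proposition~\ref{intro:examples} is compatible with the universal $K$-matrix in the sense demanded by Definition~\ref{qtqsp}, and that the ribbon element one picks is indeed the standard one (so that the twists on both sides of the module category action match). Both compatibilities are already implicit in the Balagovic--Kolb setup, so the proof reduces to citing the above results and invoking Proposition~\ref{intro:examples}.
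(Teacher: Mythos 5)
Your proposal takes essentially the same approach as the paper, which offers no explicit proof of the corollary beyond the sentence preceding it: quasi-triangularity of the coideal is outsourced to \cite[Corollary 9.6]{balakolb16} and \cite[Corollary 3.14]{kolb17}, and the remaining hypotheses are standard facts about $\uqg$. Two small points of precision worth noting when comparing with the paper's actual requirements: first, the version of Proposition~\ref{intro:examples} used in the body of the paper, namely Proposition~\ref{prop:ztd2ex}, asks that $\phi$ preserve not only the $R$-matrix (which is already built into Definition~\ref{qtqsp}) but also the ribbon element $\nu$; your ``bookkeeping'' remark gestures at this but does not isolate $\phi(\nu)=\nu$ as the genuinely extra condition beyond what the $K$-matrix package already contains. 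Second, the twist $\phi$ is not quite a ``quantum lift of $\theta$'' in the naive sense: in the Balagovic--Kolb framework $\phi$ is the Hopf-algebra-automorphism component of a decomposition of $\theta$ determined by the Satake/Dynkin data (compare the decomposition $\theta = J\phi(-)J^{-1}$ in the paper's Lie-algebra example and Table~\ref{table:araki}), and its existence and properties are part of what the cited results supply. Neither point is a gap --- both are covered by the cited references --- but being explicit about them would make the verification tighter.
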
 

In order to prove Theorem \ref{intro:maintheorem} we establish various coherence theorems which may be of independent interest. 
A priori, to specify a categorical $\ztd_n$-algebra one needs to provide an infinite amount of data, corresponding to the infinitely many operations in the operad $\ztd_n$, the isotopies between these operations, the isotopies between isotopies, and so on. 
Theorem \ref{intro:maintheorem} states all this data can be reduced to the finite list of functors, isomorphisms and equations in Definition \ref{intro:pdef}.
Concretely, one needs to show that all the functors and natural isomorphisms one can construct using these generators make the correct diagrams commute, as specified by the operad $\ztd_n$.
This is exactly the content of a coherence theorem. We can state our coherence results informally as follows:
\begin{thm} \label{intro:coherence} 
	Let $(\catA,\catM)$ be a $\Z_2$-monoidal/braided/symmetric pair. By a coherence diagram we mean a diagram in $\catA$ or $\catM$ constructed using the functors and natural isomorphisms that are part of Definition \ref{intro:pdef}.
	\begin{enumerate}
		\item Any coherence diagram in a $\Z_2$-monoidal pair commutes. (Theorem \ref{moncoh})
		\item A coherence diagram in a $\Z_2$-braided pair commutes if the underlying braids agree.\footnote{The underlying braid of a natural isomorphism is obtained by interpreting instances of the braiding $\sigma$ and $\kappa$ as generators of the cylinder braid group $B_n^{\mathrm{cyl}}$ (Definition \ref{prop:bcyl}).} (Theorem \ref{braidcoh})
		\item Any coherence diagram in a $\Z_2$-symmetric pair commutes. (Theorem \ref{symcoh})
	\end{enumerate} 
\end{thm}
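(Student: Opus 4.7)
My plan is to prove the three parts of Theorem~\ref{intro:coherence} in parallel with the classical coherence theorems of MacLane, Joyal--Street, and the symmetric analogue, augmented to handle the involution $\Phi$, the module $\catM$, and the $\Z_2$-cylinder braiding $\kappa$. The essential input is Theorem~\ref{intro:maintheorem}, which identifies the structures of Definition~\ref{intro:pdef} with categorical algebras over the operads $\ztd_n$; once this identification is available, coherence reduces to a statement about the homotopy type of the spaces of operations $\ztd_n(k)$ and of the groupoids they generate.

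For part (1), I would follow MacLane's strictification strategy. Build the free $\Z_2$-monoidal pair $F$ on a set of object-generators for $\catA$ together with a distinguished generator of $\catM$: its objects are formal tensor words in these generators, their $\Phi$-twists and the module generator, and its morphisms are formal composites of associators, unitors and involution coherences modulo the defining relations of Definition~\ref{intro:pdef}(1). A coherence diagram in an arbitrary $\Z_2$-monoidal pair lifts to $F$ along the canonical functor, so it suffices to show $F$ has at most one morphism between any pair of parallel objects. This is done by constructing a strict model $S$ (with identities for associators and $\Phi^2$) and exhibiting an equivalence $F\simeq S$. Equivalently, using Theorem~\ref{intro:maintheorem}(1) one reads off coherence from the observation that each path component of $\ztd_1(k)$ is contractible.

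For part (2), by Theorem~\ref{intro:maintheorem}(2) a $\Z_2$-braided pair is the same datum as a categorical $\ztd_2$-algebra. A coherence diagram in $(\catA,\catM)$ can then be represented as a pair of parallel paths in an operation space $\ztd_2(k)$ for suitable $k$. The key topological input is therefore the identification of the fundamental groupoid of $\ztd_2(k)$: I expect that $\pi_0(\ztd_2(k))$ recovers the cylinder braid group $\bcyl_k$ (Definition~\ref{prop:bcyl}), and that each path component is simply connected, by a $\Z_2$-equivariant variant of the classical computation of the fundamental group of a disk-configuration space. Granting this, two parallel morphisms in a coherence diagram agree precisely when the underlying paths represent the same element of $\bcyl_k$, which is the statement that their underlying braids agree.

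For part (3), the strategy is to combine part (2) with a stabilisation argument in the spirit of Baez--Dolan. The extra relations $\sigma^2=\id$ and $\kappa^2=\id$ force the underlying cylinder braid to factor through the hyperoctahedral quotient $\Z_2\wr S_k$, so any coherence diagram in a $\Z_2$-symmetric pair commutes iff its two sides induce the same signed permutation on the underlying objects; but this is automatic, since the signed permutation is determined by the source and target. Operadically this corresponds to the fact that, for $n\geq 3$, $\pi_0(\ztd_n(k))=\Z_2\wr S_k$ and each component of $\ztd_n(k)$ is contractible, yielding an independent derivation via Theorem~\ref{intro:maintheorem}(3). The main obstacle throughout is the topological input — verifying the claimed homotopy types of the $\ztd_n(k)$, in particular identifying $\pi_0(\ztd_2(k))$ with $\bcyl_k$ and establishing the vanishing of higher homotopy — which requires a careful analysis of $\Z_2$-equivariant orbifold embeddings and their isotopies.
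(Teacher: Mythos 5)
Your plan for parts~(2) and~(3) is circular in the context of this paper. You propose to deduce coherence from Theorem~\ref{intro:maintheorem} (the identification of $\Z_2$-braided pairs with categorical $\ztd_2$-algebras), but in the paper that identification is proved \emph{using} the coherence theorems: Section~\ref{sec:coherence} establishes Theorems~\ref{moncoh}, \ref{braidcoh}, \ref{symcoh} first, and they are then invoked repeatedly in Section~\ref{sec:rexalg} (e.g.\ in the proofs of Theorem~\ref{thm:1rs} and Theorem~\ref{thm:2rs}, where uniqueness of the structural isomorphism assigned to an isotopy is exactly the coherence statement). The paper even flags (``A note to the reader'') that there is no Dunn additivity available for $\ztd_2$, so the classification cannot be imported from abstract machinery; it has to be built by hand on top of coherence. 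Consequently you cannot take Theorem~\ref{intro:maintheorem} as granted input to Theorem~\ref{intro:coherence}.

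Independently of the circularity, the topological statements you lean on are mis-stated. You write that $\pi_0(\ztd_2(k))$ ``recovers the cylinder braid group'' and that ``each path component is simply connected.'' In fact $\ztd_2(\DD^{\amalg k},\DD_*)$ is homotopy equivalent to $F_k[\DD_*/\Z_2]$ (Proposition~\ref{htype}), whose path components are indexed (not by $\bcyl_k$ but essentially by $\{0,1\}^{\times k}\times S_k$) and whose \emph{fundamental group} after passing to unordered configurations is $\bcyl_k$ (Lemma~\ref{kpi1}); the ordered space $F_k[\DD_*/\Z_2]$ is a $K(\pi,1)$ for the pure cylinder braid group, which is certainly not trivial. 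What makes the dictionary between isotopies and natural isomorphisms well defined is the vanishing of $\pi_{\geq 2}$, i.e.\ the $K(\pi,1)$ property, not simple connectivity.

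What the paper actually does, and what you should aim for, is the ``strictification implies coherence'' route of Joyal--Street, which is self-contained and makes no reference to the operad. One first proves the coherence theorem for \emph{strict} $\Z_2$-braided pairs by a direct braid-group calculation (Proposition~\ref{strz2coh}, using the Brieskorn presentation of $\bcyl_k$ and the relations established in Lemma~\ref{relem}). One then shows every $\Z_2$-braided pair $(\catA,\catM)$ is equivalent, as a $\Z_2$-braided pair, to a strict one: strictify the monoidal structure via $\catA_{st}=\Fun_\catA(\catA_\catA,\catA_\catA)$, strictify the anti-involution via the category $\catA_{st}^{\Z_2}$ of ``involutive objects'' (Proposition~\ref{strmoninv}), strictify the module via $\catM_{st}=\Fun_\catA(\catA,\catM)$, and transport $\sigma$ and $\kappa$ across these equivalences (Lemma~\ref{braidtransport}, Proposition~\ref{ktransport}). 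Coherence for the original pair then follows by pushing a diagram through the equivalence to the strict model, invoking the strict coherence result, and pulling back (Theorem~\ref{braidcoh}). Your sketch for part~(1) gestures in this direction; the same idea, carried out carefully for $\Phi$, $\catM$ and $\kappa$, is what parts~(2) and~(3) actually require.
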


\subsection{Quantum symmetric pairs and the reflection equations} \label{intro:qsp}
We now provide some further background  on quantum symmetric pairs and the reflection equations.

An infinitesimal symmetric pair $(\g,\g^{\theta})$ consists of a complex semisimple Lie algebra $\g$ and a sub Lie algebra $\g^{\theta} \subset \g$ fixed by some involutive automorphism $\theta: \g \rightarrow \g$. Irreducible symmetric pairs\footnote{A symmetric pair is called irreducible if it cannot be obtained as the symmetric pair associated to a non-trivial product of two symmetric pairs.} were classified by S. Araki \cite{araki62}. The classification of type $A$ symmetric pairs is recorded in Table \ref{table:araki}. 
\begin{table}
	\caption{The classification of irreducible symmetric pairs in type A. Here $w_0$ denotes the involution induced by the longest word in the Weyl group.}
	\begin{tabular}{| c | c | c |}
		\hline
		Type & $(\g,\g^{\theta})$ & Twist $\phi$ \\ \hline
		\textbf{AI} & $(\mathfrak{sl}_n,\mathfrak{so}_n)$ & $w_0$\\ \hline
		\textbf{AII} & $(\mathfrak{sl}_{2n},\mathfrak{sp}_{2n})$& $w_0$\\ \hline 
		\textbf{AIII/AIV} & $(\mathfrak{sl}_{n+m}, \mathfrak{sl}_{n+m} \cap (\mathfrak{gl}_n \oplus \mathfrak{gl}_{m}) )$ & $\id$ \\ \hline
	\end{tabular}
	\label{table:araki}
\end{table}

A quantum symmetric pair $(\uqg,\uqgt)$ should then be a quantization of the pair $(\U(\g), \U(\g^{\theta}))$.
\begin{nota}
	Notation for quantum symmetric pairs varies in the literature.
	Some authors denote the coideal subalgebra by $\B_{\mathbf{c}}$. 
	The $c$ denotes a multi-parameter as there can be multi-parameter families of quantizations of $U(\g)$. 
\end{nota} 
It turns out that requiring $\uqgt$ to be a sub-Hopf algebra of $\uqg$ is too strong a requirement, rather this quantization will be a (left) coideal subalgebra i.e. a subalgebra satisfying $\Delta(\uqgt) \subset \uqgt \tensor \uqg$.

\subsubsection{The reflection equation and cylinder braids}
The first constructions of quantum symmetric pairs in \cite{nousu95,nds,dijkhuizen96} depended crucially on solutions to various versions of the reflection equation:
\begin{align} \label{ure}K_{1} \;R_{21} \; K_{2} \; R_{12} \quad = \quad R_{21} \; K_{2} \; R_{12} \; K_{1},
\end{align}
where $R$ denotes a given solution of the Yang-Baxter equation.
Like the Yang-Baxter equation, the reflection equation has an interpretation in low-dimensional topology, see Figure \ref{fig:ure}. Similar to the Reshetikhin-Turaev invariants, one can use solutions of the reflection equation to construct knot invariants \cite{td98,tdho98}. 

\begin{figure}[h!]
	\begin{center}
		$\vcenter{\hbox{\adjustbox{scale=.5}{
					\begin{tikzpicture}
					\braid[style strands={1}{white}, style strands={2}{black},style strands={3}{black}, line width = 1.3pt]
					s_1 s_1 s_2 s_1 s_1 s_2;
					\draw (1.5,0) -- (1.5,-6.5);
					\draw [draw=black,dotted] (0.5,0) rectangle (2.5,-2);
					\node at (-.5,-1) {\huge $K$};
					\draw [draw=black,dotted] (1.75,-2.25) rectangle (3.25,-3.25);
					\node at (3.75,-2.75) {\huge $R$};
					\draw [draw=black,dotted] (0.5,-3.5) rectangle (2.5,-5);
					\node at (-.5,-4.25) {\huge $K$};
					\draw [draw=black,dotted] (1.75,-5.25) rectangle (3.25,-6.25);
					\node at (3.75,-5.75) {\huge $R$};
					\end{tikzpicture}}}}
		\quad \quad \simeq \quad \quad
		\vcenter{\hbox{\adjustbox{scale=.5}{ 
					\begin{tikzpicture}
					\braid[style strands={1}{white}, style strands={2}{black},style strands={3}{black}, line width = 1.3pt]
					s_2 s_1 s_1 s_2 s_1 s_1;
					\draw (1.5,0) -- (1.5,-6.5);
					\draw [draw=black,dotted] (0.5,-1.5) rectangle (2.5,-3);
					\node at (-.5,-2.25) {\huge $K$};
					\draw [draw=black,dotted] (1.75,-0.25) rectangle (3.25,-1.25);
					\node at (3.75,-.75) {\huge $R$};
					\draw [draw=black,dotted] (0.5,-4.5) rectangle (2.5,-6);
					\node at (-.5,-5.25) {\huge $K$};
					\draw [draw=black,dotted] (1.75,-3.25) rectangle (3.25,-4.25);
					\node at (3.75,-3.75) {\huge $R$};
					\end{tikzpicture}}}}$
		\vspace{5pt}
		\caption{The reflection equation interpreted via cylinder braids.}
		\label{fig:ure}
	\end{center}
\end{figure}
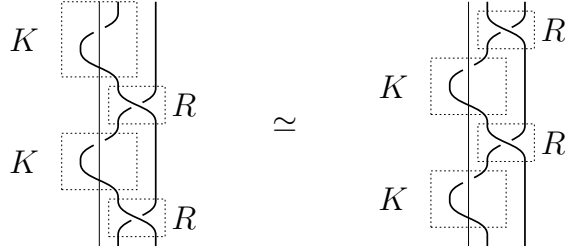
Solutions to the $\gl_n$-reflection equation were studied by J. Donin, A. Mudrov and P.P. Kulish \cite{dkm03} and completely classified by A. Mudrov \cite{mudrov02}. 
It turns out such solutions can be viewed as characters of the \emph{reflection equation algebra} $\oqg$.
In \cite{kolbstokman}, characters of the reflection equation algebra are used to reconstruct the type AIII/AIV quantum symmetric pairs of Noumi-Sugitani-Dijkhuizen. 
\begin{rmk} \label{oqg} The reflection equation algebra $\oqg$ is an (equivariant) quantization of the Semenov-Tian-Shansky Poisson bracket on $\catO (G)$ \cite{mudrov06}. This algebra goes by many names: Majid's \emph{braided dual of} $\uqg$ \cite{majid}, the \emph{quantum-loop algebra} \cite{alekseev96} and is isomorphic to the locally finite part of $\uqg$ via the Rosso form \cite[Proposition 2.8]{kolbstokman}. 
\end{rmk} 

\subsubsection{The reflection equations revisited}

Recall that various versions of the reflection equation appeared in the works \cite{noumi,nousu95,nds,dijkhuizen96}.
It was only later realised by M. Balagovic and S. Kolb, through their algebraic construction of universal $K$-matrices, that there is a general framework of \emph{twisted reflection equations} which unifies all different reflection equations \cite[Remark 9.7]{balakolb16}. 
For a quasi-triangular Hopf algebra $H$ with coideal subalgebra $B$ one fixes an additional Hopf algebra involution $\phi$ of $H$, such that $(\phi\tensor \phi)(R) = R$, called \emph{the twist}. A ($\phi$-twisted) universal $K$-matrix is a universal solution to the $\phi$-twisted reflection equation:
\begin{align} \label{eq:tre}
	K_{1} \; R_{21}^{\phi} \; K_{2} \; R_{12} \quad = \quad R_{21}^{\phi,\phi} \; K_{2} \; R_{12}^{\phi} \; K_{1},
\end{align}
where $R^\phi = (\phi \tensor 1)(R)$ and $R^{\phi,\phi} = (\phi\tensor \phi)(R) = R$.
\begin{rmk} The twist $\phi$ for a given quantum symmetric pair is determined by the Dynkin data that characterises it in Araki's classification \cite{araki62}. See Table \ref{table:araki} for twists in type A. 
\end{rmk} 
\begin{rmk} Quantum symmetric pairs for which the twist is trivial, $\phi = \id$, have universal $K$-matrices that provide solutions to the untwisted reflection Equation \eqref{ure}. 
	This explains why in \cite{kolbstokman} characters of $\oqg$ could only be used to construct quantum symmetric pairs of type $AIII/AIV$: such characters solve the untwisted reflection equation.
\end{rmk}
The twist is naturally built into the categorical framework of $\Z_2$-braided pairs via the involution $\Phi$. Recall from Figure \ref{fig:z2monodromy} that we can interpret the $\Z_2$-monodromy in the operad $\ztd_2$ as a coloured braid.
The twisted reflection equation is then naturally interpreted in the operad $\ztd_2$ as an isotopy of coloured braids, see Figure \ref{fig:tre}.
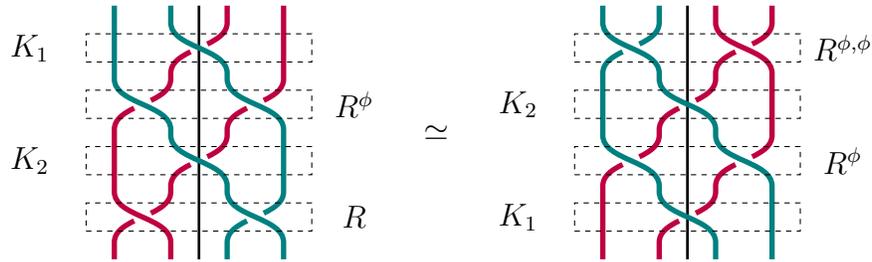
\begin{figure}[h] 
	\[ \vcenter{\hbox{\begin{tikzpicture}[scale=.75]
			\draw[black,line width=1pt] (2.5,-4.5) -- (2.5,0);
			\braid[line width = 2pt, 
			style strands={1}{teal}, 
			style strands={2}{teal},
			style strands={3}{purple},
			style strands={4}{purple}]
			s_2 s_1-s_3 s_2 s_1-s_3;
			\draw [draw=black,dotted] (0.5,-.5) rectangle (4.5,-1);
			\node at (-.5,-.75) {$K_1$};
			\draw [draw=black,dotted] (0.5,-1.5) rectangle (4.5,-2);
			\node at (5.25,-1.75) { $R^\phi$};
			\draw [draw=black,dotted] (0.5,-2.5) rectangle (4.5,-3);
			\node at (-.5,-2.75) {$K_2$};
			\draw [draw=black,dotted] (0.5,-3.5) rectangle (4.5,-4);
			\node at (5.25,-3.75) { $R$};
			\end{tikzpicture}}} \quad \simeq \quad \vcenter{\hbox{\begin{tikzpicture}[scale=.75]
			\draw[black, line width = 1pt] (2.5,-4.5) -- (2.5,0);
			\braid[line width = 2pt, 
			style strands={1}{teal}, 
			style strands={2}{teal},
			style strands={3}{purple},
			style strands={4}{purple}]
			s_1-s_3 s_2 s_1-s_3 s_2;
			\draw [draw=black,dotted] (0.5,-1.5) rectangle (4.5,-2);
			\node at (-.5,-1.75) {$K_2$};
			\draw [draw=black,dotted] (0.5,-.5) rectangle (4.5,-1);
			\node at (5.25,-.75) { $R^{\phi,\phi}$};
			\draw [draw=black,dotted] (0.5,-3.5) rectangle (4.5,-4);
			\node at (-.5,-3.75) {$K_1$};
			\draw [draw=black,dotted] (0.5,-2.5) rectangle (4.5,-3);
			\node at (5.25,-2.75) { $R^{\phi}$};
			
			\end{tikzpicture}}}  
	\]
	\caption{The $\phi$-twisted reflection equation interpreted via coloured braids.}
	\label{fig:tre}
\end{figure}
Therefore, we can now recognise that the twists $\phi$, which had an algebraic origin but lacked intrinsic topological meaning, are an integral part of the topological interpretation of quantum symmetric pairs. 

\subsection{Outlook: factorization homology and quantum symmetric pairs} \label{intro:outlook} The constructions in this paper are the first step in a program where quantum symmetric pairs are used for applications to low-dimensional topology. 
The next steps will involve (developing and) applying a general framework called \emph{factorization homology}.

Factorization homology was first introduced by J. Lurie \cite{lurietft,lha} as a topological variant of the chiral homology of Beilinson-Drinfeld \cite{bd04}, and was further developed by J. Francis, D. Ayala and H. Tanaka \cite{af15, aft17}.
Nowadays there is a large body of literature on factorization homology and related ideas, see e.g. \cite{andrade10,gwilliam12, gtz12,mw12,cg17,horel17}. See \cite{ginot15} for a survey.
The formalism of factorization homology, in its simplest form, takes as input an $E_n$-algebra $\catA$ in a symmetric monoidal higher category $\catC$ and associates to every framed $n$-manifold $M$ an invariant denoted $\int_M \catA$. 
We will informally explain the assignment. 
Following \cite{af15}, we view the manifold as glued from all the framed disks embedding into $M$, and view the $E_n$-algebra $\catA$ as a functor $\catA: \catD isk^{\mathrm{fr}}_n \rightarrow \catC$ from some higher category of framed disks into $\catC$.
The invariant is then defined as follows: 
\begin{align} \label{intro:fhcolim}
	\int_M \catA :=  \underset{D^n \subset M}{\mathrm{colim}} \; \catA(D^n) 
\end{align}
We record some key properties of factorization homology:
\begin{enumerate}
	\item Factorization homology is uniquely characterised by an excision property. After decomposing a manifold along a collared boundary, one can compute the global invariant of the manifold  in terms of the invariants of the pieces \cite[\S 3.5]{af15}.\footnote{The colimit in Equation \eqref{intro:fhcolim} is difficult to compute and typically computed via excision in practice.} 
	\item The invariant is functorial with respect to embeddings. In particular $\mathrm{Diff^{fr}}(M)$ acts on $\int_M \catA$. Moreover, for a categorical $E_n$-algebra $\catA$ and $X_1, \dots, X_n \in \catA$ the braid group $B_n(M)$ acts on an associated object $\int_M (X_1 \tensor \dots \tensor X_n) \in \int_M \catA$. 
	\item There are many versions of factorization homology e.g. for oriented manifolds and manifolds with singularities  \cite{af15,aft17}.
	\item For a given $E_n$-algebra $\catA$, the factorization homology of $n$-manifolds with coefficients in $\catA$ defines a fully extended $n$-dimensional TQFT \cite{scheimbauer}. 
\end{enumerate}
Most relevant to our work are the applications of factorization homology to representation theory via the factorization homology of braided tensor categories as developed in \cite{bzbj,bzbj2}. 
In \cite{bzbj,bzbj2}, the authors compute invariants of genus $g$ framed and oriented surfaces using factorization homology with coefficients in the braided tensor category of quantum group representations. 
For example, the invariant assigned to the annulus is the category of representations of the reflection equation algebra $\oqg$ and the invariant assigned to the torus is the category of strongly equivariant quantum $\mathcal{D}$-modules on $G$.
Via Key Property (2) the authors obtain braid group actions of oriented surfaces.
Such surface braid group actions were previously constructed by `generators and relations' methods in \cite{jordan09} and used to construct representations of the type A \emph{double affine Hecke algebra} (abbreviated DAHA). 
The type A DAHA, due to  I. Cherednik \cite{cherednik05}, is a quotient of the group algebra of the torus braid group $B_n(T)$ by additional Hecke relations.
The braid group actions constructed in \cite{bzbj,bzbj2} recover the surface braid group actions of \cite{jordan09} and provide an intrinsic topological explanation for their existence.

To connect these developments to our work on quantum symmetric pairs and the $\ztd_2$-operad we make the following observation. The colimit formula \eqref{intro:fhcolim} defining factorization homology is motivated by the fact that a $n$-manifold $M$ is covered by disks $D^n$. 
Correspondingly, a $\Z_2$-orbifold surface $[\Sigma/\Z_2]$ with isolated singularities is covered by the orbifold disks $\DD$ and $\DD_*$, which appear in the definition of the operad $\ztd_2$.
Hence it is natural to associate invariants to such orbifold surfaces constructed from a $\ztd_2$-algebra via a formula like Equation \eqref{intro:fhcolim}.
In the follow-up paper \cite{EFH} we make this idea precise by introducing $\Gamma$-equivariant factorization homology, where $\Gamma$ can be an arbitrary finite group. 

Recall that in \cite{jordanma} representations of the  type $C^\vee C_n$ DAHA were constructed using quantum symmetric pairs of type AIII/AIV. These constructions were made using a generators and relations approach alike \cite{jordan09}, and similarly lacked a topological interpretation. 
The DAHA of type $C^\vee C_n$ arises as a quotient of the orbifold surface braid group $B_n[T^2/\Z_2]$, where the $\Z_2$ action on the torus $T^2$ is induced by rotating the fundamental domain. 
In future work we hope to recover the DAHA representations in \cite{jordanma} from the braid group actions arising in the equivariant factorization homology of the orbifold torus $[T^2/\Z_2]$ with coefficients in a quantum symmetric pair. 	
Moreover, the general construction of equivariant factorization homology will allow us to immediately generalise the constructions in \cite{jordanma} to other quantum symmetric pairs, not necessarily of type AIII/AIV. 
This provides important motivation for our work on quantum symmetric pairs and factorization homology.
\subsection{Organisation}
The contents of this article are laid out as follows. 

In Section \ref{sec:operad} we introduce and study the operads $\ztd_n$.
We first recall the definition of orbifold configuration spaces and then define the operads $\ztd_n$.  We compute the homotopy type of the spaces of operations of the operads $\ztd_n$ in terms of orbifold configuration spaces.

In Section \ref{sec:ztd2} we introduce $\Z_2$-monoidal pairs, $\Z_2$-braided pairs and $\Z_2$-symmetric pairs. Moreover, we prove Proposition \ref{intro:examples}.

Section \ref{sec:coherence} is a technical section where we prove Theorem \ref{intro:coherence}. The reader uninterested in methods for proving a coherence result can safely skip the section. To prove the theorem we follow the `strictification implies coherence' approach of Joyal and Street \cite{js93}. 

Section \ref{sec:rexalg} is devoted to proving Theorem \ref{intro:maintheorem}, though two further results from Section \ref{sec:genalg} are needed to complete the proof. We begin by defining the 2-category $\Rex$ and the Deligne-Kelly tensor product.
We then use the coherence results of Section \ref{sec:coherence} to assign categorical $\ztd_n$-algebras for $n=1$, $n=2$, $n\geq 3$ to respectively $\Z_2$-monoidal, $\Z_2$-braided and $\Z_2$-symmetric pairs. We also construct assignments in the opposite direction.

Finally, in Section \ref{sec:genalg} we recall the definitions in \cite{lha} of algebras over $\infty$-operads. 
We show that in the special case of $\ztd_n$-algebras in $\Rex$ we recover the definition of categorical $\ztd_n$-algebras we gave in \S \ref{sec:rexalg}.
We conclude by finishing the proof of Theorem \ref{intro:maintheorem} by showing that the assignments constructed in \S \ref{sec:rexalg} are inverse equivalences.
\addtocontents{toc}{\protect\setcounter{tocdepth}{1}}
\subsection*{A note to the reader}
It is well known that categorical $E_2$-algebras are braided monoidal categories. For a proof in the strict setting see \cite{wahl01}, and for a proof using $\infty$-categories and Dunn Additivity see \cite{lha}. As there is no Dunn Additivity for $\ztd_2$-algebras we had to directly prove that categorical $\ztd_2$-algebras are $\Z_2$-braided pairs. A subset of our arguments can be used to give a direct proof that categorical $E_2$-algebras are braided monoidal categories. As we are unaware of such a proof in the literature, this may be of independent interest. 
\subsection*{Acknowledgements}
First and foremost, I wish to thank David Jordan for his invaluable guidance. 
I would also like to thank Stefan Kolb for patiently answering many questions about quantum symmetric pairs.
This work was supported by the Engineering and Physical Sciences Research Council [grant number 1633460].
\addtocontents{toc}{\protect\setcounter{tocdepth}{2}}

\section{The involutive little disks operad} \label{sec:operad}
\subsection{Orbifold configuration spaces} \label{colbraid}
Orbifold configuration spaces will be fundamental to our understanding of the $\ztd_n$-operads.
We now recall the definitions of configuration spaces for manifolds and global quotient orbifolds and their associated braid groups.
\begin{defn} Let $\Sigma$ be some surface. 
	\begin{enumerate}
		\item The \emph{configuration space of $k$ ordered points} in $\Sigma$ is denoted $F_n(\Sigma)$ and defined as
		\[F_k (\Sigma) := \big\{ (z_1,\dots,z_k) \in \Sigma^{\times k} \mid z_i \neq z_j \text{ if } i \neq j \big\}.\]
		The symmetric group $S_k$ acts freely on $F_k(\Sigma)$ by swapping points. 
		\item The \emph{configuration space of $k$ unordered points} in $\Sigma$ is denoted $C_k(\Sigma)$ and defined to be the quotient space $C_k(\Sigma) := F_k(\Sigma)/S_k$.
		\item The \emph{braid group on $n$ strands} of $\Sigma$ is denoted $B_k(\Sigma)$ and defined to be the fundamental group $ \pi_1 C_k(\Sigma)$.
	\end{enumerate}
\end{defn}
Recall that for a given group $\pi$ an \emph{Eilenberg-MacLane space of type $K(\pi,1)$} is a topological space $S$ that has trivial homotopy groups except $\pi_1(S) \cong \pi$.
\begin{ex} \label{confex} We recall the following classical examples of Eilenberg-MacLane spaces:
	\begin{enumerate}
		\item The space $C_k(\R^2)$ is a $K(B_k,1)$ where $B_k$ is Artin's braid group. 
		\item The space $C_k(\R^2\setminus \pt)$ is a $K(\bcyl_k,1)$ where $\bcyl_k$ is the cylinder braid group.
	\end{enumerate}
\end{ex}

\begin{prop} \label{prop:bcyl}\cite{brieskorn} The group $\bcyl_k$ of cylinder braids admits a presentation with generators $\sigma_1, \dots, \sigma_k$, $\kappa$ and relations
	\begin{align}
		&\sigma_i \sigma_{i+1} \sigma_i = \sigma_{i+1} \sigma_{i} \sigma_{i+1}, & &\sigma_i \sigma_j = \sigma_j \sigma_i \text{ if } |i-j|>1,\label{eq:bcyl1}\\
		&\sigma_1 \kappa \sigma_1 \kappa =  \kappa \sigma_1 \kappa \sigma_1, & &\sigma_i \kappa = \kappa \sigma_i \text{ if } i > 1. \label{eq:bcyl2}
	\end{align}
\end{prop}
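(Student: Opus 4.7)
The plan is to identify $\bcyl_k$ with the Artin group of type $B_k$, whereupon the stated presentation is exactly the one read off from the Coxeter--Dynkin diagram. I proceed in three steps.

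First, I would realise the generators geometrically. Fix $k$ basepoints on the positive real axis of $\R^2\setminus\{\pt\}$. Let $\sigma_i$ be represented by the standard half-twist exchanging the $i$-th and $(i{+}1)$-th basepoints, and let $\kappa$ be represented by the loop in which the first strand winds once around the puncture and returns, all other strands held fixed. The first line of relations \eqref{eq:bcyl1} is then inherited from the copy of the usual braid group $B_k$ sitting inside $\bcyl_k$ via strands that avoid the puncture. The commutation $\sigma_i\kappa=\kappa\sigma_i$ for $i>1$ in \eqref{eq:bcyl2} is clear by choosing representatives with disjoint support. The four-braid relation $\sigma_1\kappa\sigma_1\kappa=\kappa\sigma_1\kappa\sigma_1$ is a direct planar isotopy: both sides describe the full double winding of the first two strands jointly around the puncture.

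Second, I would verify completeness by passing to the type-$B_k$ hyperplane arrangement. By Example \ref{confex}(2), $C_k(\R^2\setminus\pt)$ is a $K(\bcyl_k,1)$, so it suffices to present its fundamental group. Squaring coordinates, $(z_1,\dots,z_k)\mapsto(z_1^2,\dots,z_k^2)$, identifies $C_k(\C^*)$ with the quotient by $S_k$ of the complement in $\C^k$ of the reflection arrangement $\{z_i=0\}\cup\{z_i=\pm z_j\}$. By the Brieskorn--Deligne theorem, this complement is a $K(\pi,1)$ whose fundamental group, together with the $S_k$-action accounted for, is the Artin group $A(B_k)$. The Coxeter diagram of type $B_k$ is a chain with a single edge of label $4$ at one end, and reading off its Artin presentation yields exactly the generators and relations of the proposition, with $\kappa$ corresponding to the order-$4$ node and $\sigma_1,\dots,\sigma_{k-1}$ to the remaining nodes.

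Third, to align the two presentations, I would compute the monodromy of the geometric generators around the hyperplanes $z_1=0$ and $z_i=z_{i+1}$ in the squared coordinates and match them with the Artin generators node-by-node. The main obstacle is the completeness step: the relation check is elementary, but showing that no further relations are needed either requires invoking the $K(\pi,1)$ theorem for the type-$B$ arrangement or, alternatively, an induction on $k$ via the Fadell--Neuwirth fibration $C_k(\R^2\setminus\pt)\to C_{k-1}(\R^2\setminus\pt)$ with fiber a $(k{+}1)$-punctured plane, where one must carefully track the conjugation action of the base on loops in the fiber. Since the result is classical and attributed to Brieskorn \cite{brieskorn}, I would present it as a sketch with appropriate citations rather than a full rederivation.
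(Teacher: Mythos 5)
The paper gives no proof for this proposition; it simply cites Brieskorn, and your sketch is the natural unpacking of that citation: pass to the type-$B_k$ reflection arrangement complement via the squaring map, invoke the Brieskorn--Deligne $K(\pi,1)$ theorem to identify $\pi_1 C_k(\C^*)$ with the Artin group $A(B_k)$, and verify the Artin relations geometrically. One small imprecision in your second step: the squaring map $(z_1,\dots,z_k)\mapsto(z_1^2,\dots,z_k^2)$ is itself the quotient of the arrangement complement by $(\Z_2)^{\times k}$, so $C_k(\C^*)$ is the quotient by the full Weyl group $(\Z_2)^{\times k}\rtimes S_k$ rather than ``by $S_k$'' as your sentence literally says; your subsequent ``with the $S_k$-action accounted for'' shows you intend exactly this, but the phrasing is worth tightening. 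Separately, your generator list $\sigma_1,\dots,\sigma_{k-1},\kappa$ (one per node of the $B_k$ Coxeter diagram) is the correct one; the proposition as printed lists $\sigma_1,\dots,\sigma_k,\kappa$, which is an off-by-one slip in the paper's indexing, not in yours.
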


We now give a definition of orbifold configuration spaces. The definition is a slight adaptation of the `orbit configuration spaces' of M. Xicot\'encatl \cite{xicot97}.\footnote{The difference between our definition and \cite{xicot97} is that we restrict to smooth points. }
\begin{defn} 
	Let $\Gamma$ be a finite group acting smoothly and faithfully on a surface $\Sigma$. Denote $\Sigma_{\mathrm{free}} \subset \Sigma$ the subset  of \emph{smooth points} i.e. the subset of points where $\Gamma$ acts freely. 
	\begin{enumerate}
		\item \emph{The configuration space of $k$ ordered smooth points} in the orbifold $[\Sigma/G]$ is denoted $F_k[\Sigma/G]$ and defined as
		\[F_k [\Sigma/\Gamma] := \big\{ (z_1,\dots,z_n)\in (\Sigma_{\mathrm{free}})^{\times k} \mid \Gamma \cdot z_i \cap \Gamma \cdot z_j = \emptyset \text{ if } i \neq j  \big\}.\]
		There is a natural free $\Gamma^{\times k} \rtimes S_k$ action on $F_k[\Sigma/\Gamma]$.
		\item \emph{The configuration space of $k$ unordered smooth points} in $[\Sigma/\Gamma]$ is denoted $C_k[\Sigma/\Gamma]$ and defined to be the quotient space
		$F_k [\Sigma/G] /\Gamma^{\times k} \rtimes S_k$.
		\item The \emph{braid group on $k$ strands} in the orbifold $[\Sigma/\Gamma]$ is denoted $B_k[\Sigma/\Gamma]$ and defined to be the fundamental group$\pi_1 C_n[\Sigma/\Gamma]$.
	\end{enumerate}
\end{defn}

The following two $\Z_2$-global quotients will be our central examples throughout:

\begin{ex} \label{danddstardef} Fix a dimension $n\geq 1$. Let $D^n$ denote the open unit disk in $\R^n$. Let $\Z_2$ act on $\R^n$ via the sign representation. We have an induced action on $D^n$.
	\begin{enumerate}
		\item Let $\DD_*$ denote the topological space $D^n$ equipped with the $\Z_2$ action given by the sign action.
		\item Let $\DD$ denote the topological space $D^n \amalg D^n$ equipped with the $\Z_2$ action given by combining the sign action with swapping the two disks.
	\end{enumerate}
	The spaces for $n=2$ are illustrated in Figure \ref{fig:danddstar}.
\end{ex}

\begin{figure}
	\begin{tikzpicture}
	\draw[teal] (0,0) circle [radius=1];
	\draw[purple] (2.5,0) circle [radius=1];
	\node at (0,0) {$D^2_b$};
	\node at (2.5,0) {$D^2_r$};
	\node at (1.25,-1.5) {$\DD$};
	\draw[black] (7,0) circle [radius=1];
	\node at (7,0) {$\ast$};
	\node at (7,-1.5) {$\DD_*$};
	\end{tikzpicture}
	\caption{The two-dimensional $\Z_2$-global quotients $\DD$ and $\DD_*$.}
	\label{fig:danddstar}
\end{figure}
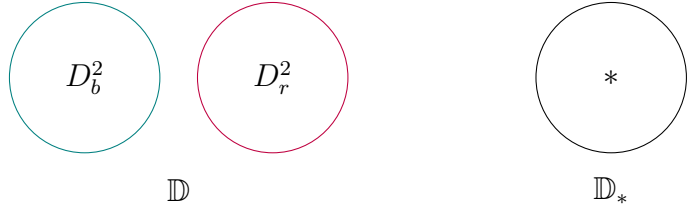

\begin{lem} \label{kpi1} Let $\DD$ and $\DD_*$ be the orbifolds defined above for dimension $n=2$.
	\begin{enumerate}
		\item The space $C_k[\DD^2/\Z_2]$ is a $K(B_k,1)$.
		\item  The space $C_k[\DD^2_*/\Z_2]$  is a $K(\bcyl_k,1)$.
	\end{enumerate}
\end{lem}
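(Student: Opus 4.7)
The plan is to reduce each statement to the corresponding classical fact in Example \ref{confex} by identifying the relevant orbifold configuration space with an ordinary configuration space on an honest manifold, using the fact that both $\Z_2$-actions are free on the smooth loci.

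For part (1), I would first observe that the $\Z_2$-action on $\DD = D^2 \amalg D^2$ is free (it has no fixed points because it swaps the two disks), so $\DD_{\mathrm{free}} = \DD$. The quotient map $\DD \to \DD/\Z_2$ is a $\Z_2$-covering, and projecting onto (say) the first copy of $D^2$ identifies $\DD/\Z_2$ homeomorphically with $D^2$. Because the action is free, a tuple $(z_1,\dots,z_k) \in \DD^k$ has pairwise disjoint orbits if and only if its image in $(D^2)^k$ consists of pairwise distinct points. Hence the map $F_k[\DD/\Z_2] \to F_k(D^2)$ induced by the quotient is a (trivial) $\Z_2^k$-covering, and passing further to the $S_k$-quotient yields a homeomorphism $C_k[\DD/\Z_2] \cong C_k(D^2) \simeq C_k(\R^2)$. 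The latter is a $K(B_k,1)$ by Example \ref{confex}.

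For part (2), the sign action on $D^2$ fixes only the origin, so $(\DD_*)_{\mathrm{free}} = D^2 \setminus \{0\}$ and the $\Z_2$-action on this subspace is free. The quotient map $D^2 \setminus \{0\} \to (D^2\setminus\{0\})/\Z_2$ is a double cover, and the map $z \mapsto z^2$ identifies $(D^2\setminus\{0\})/\Z_2$ homeomorphically with $D^2 \setminus \{0\}$. Repeating the argument of (1) on the smooth locus, freeness of the action ensures disjointness of orbits is equivalent to distinctness of images in the quotient, so $F_k[\DD_*/\Z_2]/\Z_2^k \cong F_k\bigl((D^2\setminus\{0\})/\Z_2\bigr)$, and passing to $S_k$-orbits yields $C_k[\DD_*/\Z_2] \cong C_k(D^2\setminus\{0\}) \simeq C_k(\R^2\setminus\{\mathrm{pt}\})$. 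Example \ref{confex} then identifies this as a $K(\bcyl_k,1)$.

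There is no serious obstacle here: the lemma is essentially the observation that an orbifold configuration space of an orbifold with free action coincides with the ordinary configuration space of the underlying coarse moduli space. The only mildly delicate step is checking that the condition ``pairwise disjoint $\Z_2$-orbits'' on the smooth locus passes cleanly to ``pairwise distinct points in the quotient,'' which is immediate from freeness of the action. Once this is in place, everything follows from the classical $K(\pi,1)$ statements recalled in Example \ref{confex}.
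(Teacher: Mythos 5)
Your proof is correct and follows essentially the same route as the paper: identify $C_k[\Sigma/\Z_2]$ with $C_k(\Sigma_{\mathrm{free}}/\Z_2)$ by using freeness of the action on the smooth locus, then invoke the classical $K(\pi,1)$ facts of Example \ref{confex}. The paper treats both cases uniformly in one sentence, while you spell out the two quotient identifications ($\DD/\Z_2\cong D^2$ and $z\mapsto z^2$ for $\DD_*$) explicitly, but the argument is the same.
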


\begin{proof} 
	As the proofs are identical we will do both at once. Let $\Sigma$ be either $\DD$ or $\DD_*$. 
	The projection $F_k[\Sigma/\Gamma] \rightarrow F_n(\Sigma_{free}/\Gamma)$ descends to a $S_k$-equivariant homeomorphism $F_k[\Sigma/\Gamma]/\Gamma^{\times k} \cong F_k(\Sigma_{free}/\Gamma)$. Consequently $C_k[\Sigma/\Gamma] \cong C_k(\Sigma_{free}/\Gamma)$ are naturally homeomorphic. Thus we reduce to the classical cases of Example \ref{confex}.
\end{proof}

\subsection{Defining the operad} Let $n \geq 1$ denote some dimension.
\begin{nota}We will use the following notation throughout without further comment:
	\begin{enumerate} 
		\item To differentiate between the two copies of $D^n$ in $\DD$ we will write $\DD = D^n_b \amalg D^n_r$. For a map $f: \DD \rightarrow X$ we will write $f_b = f|_{D^n_b} : D^n_b \rightarrow X$ and $f_r = f|_{D^n_r} : D^n_r \rightarrow X$ for the restrictions. We will write $\textbf{0}_b$ for the center of $D^n_b \subset \DD$. 
		\item We will view maps whose source is a disjoint union of spaces as collections of \emph{component maps}, e.g. viewing $f: \DD^{\amalg k} \rightarrow \DD$  as a collection $(f^i: \DD \rightarrow \DD)_{i=1}^k$.
	\end{enumerate}
\end{nota}
An embedding $f: D^n \rightarrow D^n$ is called \emph{rectilinear} if it is of the form $f(x,y)  = (\lambda x + t_x, \lambda y + t_y)$ for some $\lambda, t_x, t_y \in \R$. We will call an embedding $f: \DD \rightarrow \DD$, and $f: \DD \rightarrow \DD_*$ rectilinear if the restrictions $f_r$ and $f_b$ are rectilinear. We will say an embedding $f:\DD^{\amalg k} \amalg \DD_*^{\amalg m} \rightarrow \DD_*$, or $f: \DD^{\amalg k} \sqcup \DD_*^{\amalg m} \rightarrow \DD$, is \emph{equivariant rectilinear} if the component maps $f_i$ are rectilinear and $\Z_2$-equivariant. 

We will now define a topological coloured operad $\ztd_n$ with two colours $\DD$ and $\DD_*$ and which has spaces of operations
\begin{align}
	&\ztd_n( \DD^{\amalg k} \amalg \DD_*^{\amalg m}, \DD) = \{\text{equivariant rectilinear embeddings } \DD^{\amalg k} \sqcup \DD_*^{\amalg m} \rightarrow \DD\}, \label{operations1} \\
	&\ztd_n( \DD^{\amalg k} \sqcup \DD_*^{\amalg m}, \DD_*) = \{\text{equivariant rectilinear embeddings } \DD^{\amalg k} \sqcup \DD_*^{\amalg m} \rightarrow \DD_* \}. \label{operations2}
\end{align}
Operadic composition is defined in the obvious way: compose the embeddings. Concretely, one inserts a configurations of disks into disks; an example in dimension two is illustrated in Figure \ref{fig:opcomp}.
\begin{rmk}
	Equivariance requires an embedding to map a $\Z_2$-fixed point to a $\Z_2$-fixed point. Therefore, $\ztd_n( \DD^{\amalg k} \amalg \DD_*^{\amalg m}, \DD) = \emptyset$ if $m>0$ and $\ztd_n( \DD^{\amalg k} \amalg \DD_*^{\amalg m}, \DD_*) = \emptyset$ if $m>1$. 
\end{rmk}
It remains to address the topology on the sets of operations \ref{operations1} and \ref{operations2}. For a rectilinear embedding $f: D^n \rightarrow D^n$ we write $r(f)$ for the radius of the disk $f(D^n)$. We define maps
\begin{align*}
	&\ztd_n (\DD^{\amalg k} \sqcup \DD_*, \DD_*) \rightarrow \R^{k+1} \times F_k[\DD_*/\Z_2], \\
	& ((f^i: \DD \rightarrow \DD_*)_{i=1}^k,f_{k+1}: \DD_* \rightarrow \DD_*) \mapsto \big(r(f^1_b),\dots,r(f^k_b),r(f_{k+1}), f^1(\textbf{0}_b),\dots, f^k(\textbf{0}_b) \big),
\end{align*}
\begin{align*}
	&\ztd_n (\DD^{\amalg k}, \DD_*) \rightarrow \R^k \times F_k[\DD/\Z_2],\\
	& ((f^i: \DD \rightarrow \DD_*)_{i=1}^k) \mapsto \big(r(f^1_b),\dots,r(f^k_b),  f^1(\textbf{0}_b),\dots, f^k(\textbf{0}_b) \big),\\
\end{align*}
\begin{align*}
	&\ztd_n(\DD^{\amalg k}, \DD) \rightarrow \R^k \times F_k[\DD/\Z_2],\\
	& ((f^i: \DD \rightarrow \DD)_{i=1}^k) \mapsto \big(r(f^1_b),\dots,r(f^k_b),  f^1(\textbf{0}_b),\dots, f^k(\textbf{0}_b) \big)
\end{align*} 
that record the positions of the centers and the radii. These maps are clearly injections. We induce the subspace topology on the sets of operations, where $\R^k$ has the Euclidean topology. This endows $\ztd_n$ with the structure of a topological operad. 
\begin{defn} The topological coloured operad $\ztd_n$, described above, with two colours $\DD$ and $\DD_*$ and operation spaces \ref{operations1} and \ref{operations2} is called \emph{the involutive little $n$-disks operad}.
\end{defn}
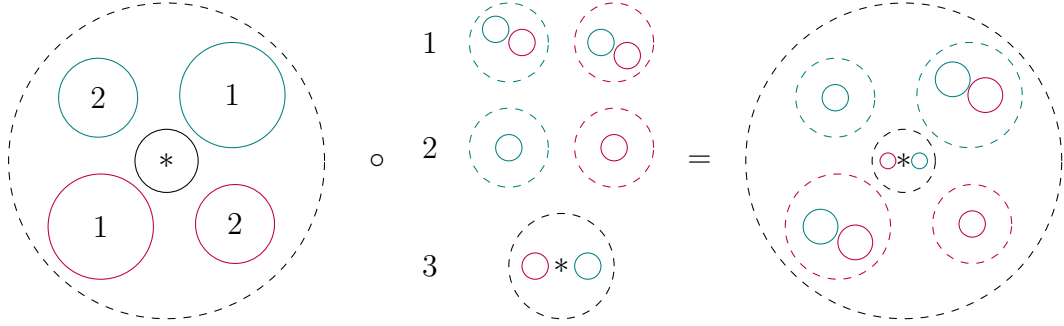
\begin{figure}[h]
	\begin{tikzpicture}[scale=.7]
	\draw [teal,dashed] (6.5,2.25) circle [radius = .75];
	\draw [teal] (6.25,2.5) circle [radius =.25];
	\draw [purple] (6.75,2.25) circle [radius =.25];
	\draw [purple,dashed] (8.5,2.25) circle [radius = .75];
	\draw [purple] (8.75,2) circle [radius =.25];
	\draw [teal] (8.25,2.25) circle [radius =.25];
	\draw [teal,dashed] (6.5,.25) circle [radius = .75];
	\draw [teal] (6.5,.25) circle [radius =.25];
	\draw [purple,dashed] (8.5,.25) circle [radius = .75];
	\draw [purple] (8.5,.25) circle [radius = .25];
	\draw [black, dashed] (7.5,-2) circle [radius = 1];
	\node at (7.5,-2) {$\ast$};
	\draw [teal] (8,-2) circle [radius =.25];
	\draw [purple] (7,-2) circle [radius = .25];
	\draw [black,dashed] (0,0) circle [radius=3];
	\draw [black] (0,0) circle [radius=.6];
	\node at (0,0) {$\ast$};
	\draw [teal] (1.25,1.25) circle [radius = 1];
	\node at (1.25,1.25) {1};
	\draw [purple] (-1.25,-1.25) circle [radius = 1];
	\node at (-1.25,-1.25) {1};
	\draw [teal] (-1.3,1.2) circle [radius =.75];
	\node at (-1.3,1.2) {2};
	\draw [purple] (1.3,-1.2) circle [radius =.75];
	\node at (1.3,-1.2) {2};
	\node at (4,0) {$\circ$};
	\node at (10.125,0) {$=$};
	\node at (5,2.25) {1};
	\node at (5,.25) {2};
	\node at (5,-2) {3};
	\draw [black,dashed] (14,0) circle [radius = 3];
	\node at (14,0) {$\ast$};
	\draw [black, dashed] (14,0) circle [radius =.6];
	\draw [teal,dashed] (15.25,1.25) circle [radius = 1];
	\draw [teal] (14.93,1.55) circle [radius =.33];
	\draw [purple] (15.55,1.25) circle [radius =.33];
	\draw [purple,dashed] (12.75,-1.25) circle [radius = 1];
	\draw [purple] (13.08,-1.55) circle [radius =.33];
	\draw [teal] (12.42,-1.25) circle [radius =.33];
	\draw [teal, dashed] (12.7,1.2) circle [radius =.75];
	\draw [purple, dashed] (15.3,-1.2) circle [radius =.75];
	\draw [teal] (12.7,1.2) circle [radius =.25];
	\draw [purple] (15.3,-1.2) circle [radius =.25];
	\draw [teal] (14.3,0) circle [radius = .15];
	\draw [purple] (13.7,0) circle [radius=.15];
	\end{tikzpicture}  
	\caption{An example of operadic composition in the operad $\ztd_2$.} 
	\label{fig:opcomp}
\end{figure}
\subsection{The homotopy type of the spaces of operations}
The connection between configuration spaces and the $\ztd_n$-operad is established in the following easy but nevertheless important result.
\begin{prop} \label{htype}
	The maps 
	\begin{align*}
		&\ztd_n( \DD^{\amalg k} \sqcup \DD_*, \DD_*) \hookrightarrow \R^{k+1} \times F_k[\DD_*/\Z_2] \xrightarrow{project} F_k[\DD_*/\Z_2],\\
		&\ztd_n( \DD^{\amalg k}, \DD_*) \hookrightarrow \R^k \times F_k[\DD_*/\Z_2] \xrightarrow{project} F_k[\DD_*/\Z_2],\\
		&\ztd_n( \DD^{\amalg k}, \DD) \hookrightarrow \R^k \times F_k[\DD/\Z_2] \xrightarrow{project} F_k[\DD/\Z_2].
	\end{align*}
	that forget the radii define homotopy equivalences.
\end{prop}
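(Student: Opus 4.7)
The plan is to construct an explicit strong deformation retraction of each operation space onto its orbifold configuration space of centers, thereby exhibiting each forgetful projection $\pi$ as a homotopy equivalence. The three cases are essentially identical; only the linear inequalities encoding disjointness change. First I would identify the fibers: given a configuration $z = (z_1, \ldots, z_k)$ in the relevant orbifold configuration space, the fiber of $\pi$ over $z$ consists of a tuple of positive radii $(r_1, \ldots, r_k)$ (and, in the first variant, an additional radius $r_{k+1}$, whose associated component $\DD_* \to \DD_*$ is forced by equivariance to be a pure scaling $x \mapsto r_{k+1} x$ centered at the fixed point, since $f(-x) = -f(x)$ kills any translation). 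The conditions for the resulting equivariant rectilinear embedding to be well-defined --- each disk contained in the ambient disk and all the disks pairwise disjoint from each other and from their $\Z_2$-mirror images --- are a finite list of strict linear inequalities in the $r_i$, of the forms $r_i + r_j < |z_i - z_j|$, $r_i + r_j < |z_i + z_j|$, $r_i < 1 - |z_i|$, and (in the first variant) $r_{k+1} + r_i < |z_i|$ and $r_{k+1} < 1$. Hence each fiber is a non-empty open convex subset of $\R_{>0}^k$, respectively $\R_{>0}^{k+1}$.

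Next I would construct a continuous section $s$ of $\pi$ by the ``shrink every disk'' prescription. Define $\rho$ on the base to be, say, one third of the minimum over all the right-hand sides of the linear inequalities listed above (pairwise distances between centers, distances between centers and their mirrors, distances to the ambient boundary, and, in the $\DD_*$ cases, distances to the fixed point). Then $\rho$ is a continuous, strictly positive function on the base, and the constant tuple $(\rho(z), \ldots, \rho(z))$ lies in the fiber over $z$, which gives a continuous section $s$. The straight-line homotopy
\[
H_t(r, z) = \bigl((1-t) r + t \rho(z) \mathbf{1},\; z\bigr), \qquad t \in [0,1],
\]
interpolates from the identity at $t=0$ to $s \circ \pi$ at $t=1$. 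Because both $r$ and $\rho(z)\mathbf{1}$ lie in the fiber over $z$ and the fiber is convex, $H_t$ stays inside the operation space for all $t$; continuity is immediate from the Euclidean subspace topology defining the operad. This realises $\pi$ as a strong deformation retract, hence a homotopy equivalence.

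The only genuinely delicate point is the inventory of linear constraints imposed by $\Z_2$-equivariance: each pair of disks contributes not one but two disjointness inequalities (one for the pair, one for each disk against the other's mirror), and in the first variant the central $\DD_*$-disk adds a further family of inequalities against each $z_i$. I expect no conceptual obstacle; once these constraints are correctly enumerated in each of the three cases, convexity of the fiber and the validity of the straight-line homotopy follow at once.
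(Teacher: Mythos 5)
Your proposal is correct and follows the same route the paper has in mind: the paper's proof simply refers to the ``usual proof for the $E_n$-operad'' (that the center-recording map is a homotopy equivalence, as in Lurie's Higher Algebra, Lemma 5.1.1.3), and your argument is precisely the explicit adaptation of that fiberwise-convex deformation retraction, with the extra linear disjointness constraints coming from $\Z_2$-mirror images and, in the $\DD_*$ cases, from the fixed point. The key observations -- that equivariance forces the $\DD_* \to \DD_*$ component to be a pure scaling, that every constraint on the radii is a strict linear inequality so each fiber is open, nonempty, and convex, and that the ``shrink uniformly'' section is continuous -- are exactly what make the straight-line homotopy work, so there is no gap.
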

\begin{proof}
	Easy adaptation of the usual proof for the $E_n$-operad and $F_k(\R^2)$.\footnote{For the usual proof for the $E_n$-operad and $F_k(\R^2)$ see e.g. \cite[Lemma 5.1.1.3]{lha}.}
\end{proof}
\begin{rmk} \label{rmk:colouredbraids}
	By Proposition \ref{htype} we can encode isotopies in $\ztd_n(\DD^{\amalg k}, \DD_*)$ up to homotopy as paths in $F_k[\DD_/\Z_2]$. 
	Paths in $F_k[\DD_*/\Z_2]$ are naturally drawn as coloured braids, see e.g. Figures \ref{fig:z2monodromy} and \ref{fig:tre}.
\end{rmk}
In dimension one the configuration spaces $F_k[\DD/\Z_2]$ and $F_k[\DD_*/\Z_2]$ are particularly easy to describe. 
To a point $p\in F_1[\DD/\Z_2]$ we assign a value 
\begin{align}
	\ep (p) = \begin{cases} 0 \text{ if } p \in D^1_b,\\ 1 \text{ if } p\in D^1_r. \end{cases} \label{tupledef1}
\end{align}
and to a configuration $x = (x_1,\dots,x_k) \in F_k[\DD_*/\Z_2]$ we assign a permutation $\sigma_x \in S_k$ recording the order of the disks in $D^1_b$, see Figure \ref{fig:perm}. More precisely, $\sigma_x$ is the unique permutation in $S_k$ for which the inequalities
\begin{align} 
	(-1)^{\ep (x_{\sigma_x(1)})} x_{\sigma_x(1)} < (-1)^{\ep(x_{\sigma_x (2)})} x_{\sigma_x(2)} < \dots < (-1)^{\ep (x_{\sigma_x (k)})} x_{\sigma_x(k)} \label{sigmadef}
\end{align}
hold. 
\begin{figure}[h]
	\begin{tikzpicture}[xscale=.5]
	\draw [(-),teal] (0,0) -- (6,0);
	\draw [(-),purple] (8,0) -- (14,0);
	\draw [(-),teal,thick] (.5,0) -- (2,0);
	\draw [(-),purple,thick] (2.5,0) -- (3,0);
	\draw [(-),purple,thick] (4,0) -- (5,0);
	\draw [(-),purple,thick] (12,0) -- (13.5,0);
	\draw [(-),teal,thick] (11,0) -- (11.5,0);
	\draw [(-),teal,thick] (9,0) -- (10,0);
	\node at (1.25,.5) {$3_b$};
	\node at (2.75,.5) {$1_r$};
	\node at (4.5,.5) {$2_r$};
	\node at (9.5,.5) {$2_b$};
	\node at (11.25,.5) {$1_b$};
	\node at (12.75,.5) {$3_r$};
	\node at (3,-1) {$D^1_b$};
	\node at (11,-1) {$D^1_r$};
	\node at (7,-3) {$ \left(1,1,0,(312)\right)$};
	\draw [(-),black] (16,0) -- (28,0);
	\node at (22,0) {$\ast$};
	\draw [(-),purple, thick] (17,0) -- (17.5,0);
	\draw [(-),teal, thick] (18,0) -- (19,0);
	\draw [(-),purple, thick] (20,0) -- (21,0);
	\draw [(-),teal, thick] (23,0) -- (24,0);
	\draw [(-),purple, thick] (25,0) -- (26,0);
	\draw [(-),teal, thick] (26.5,0) -- (27,0);
	\node at (17.25,.5) {$3_r$};
	\node at (18.5,.5) {$2_b$};
	\node at (20.5,.5) {$1_r$};
	\node at (23.5,.5) {$1_b$};
	\node at (25.5,.5) {$2_r$};
	\node at (26.75,.5) {$3_b$};
	\node at (22,-1) {$\DD_*$};
	\node at (22,-3) {$\left(0,1,0,(123)\right)$};
	\node at (15,-2) {with assignments $(\ep(x_1),\ep(x_2),\ep(x_3),\sigma_x)$ being};
	\end{tikzpicture}
	\caption{Operations in $\ztd_1$ and their assignments in $\{0,1\}^{\times 3} \times S_3$.}
	\label{fig:perm}
\end{figure} 
\begin{lem} \label{order1}
	The map 
	\begin{align*}
		&F_k [ \DD/\Z_2] \rightarrow \{0,1\}^{\times k } \times S_k, \\
		&x = (x_1,\dots,x_k) \mapsto (\ep (x_1),\dots, \ep (x_k),\sigma_x )
	\end{align*}
	is a homotopy equivalence.
\end{lem}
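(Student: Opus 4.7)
The plan is to show that $F_k[\DD/\Z_2]$ is a disjoint union of $2^k \cdot k!$ contractible open subsets, each indexed by a unique element of $\{0,1\}^{\times k} \times S_k$, so that the map of the lemma is the corresponding bijection on $\pi_0$. Since the target is already discrete, this is enough to give a homotopy equivalence.

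First I would decompose $F_k[\DD/\Z_2]$ according to the locally constant map $x \mapsto (\ep(x_1),\dots,\ep(x_k))$ from \eqref{tupledef1}, expressing $F_k[\DD/\Z_2]$ as a disjoint union of clopen subspaces $X_{(\ep_1,\dots,\ep_k)}$ indexed by the color pattern. The involution $y \mapsto -y$ identifies $D^1_r$ with $D^1_b$, and under this identification the orbit-disjointness condition $\Z_2\cdot x_i \cap \Z_2 \cdot x_j = \emptyset$ for $i \neq j$ translates to ordinary distinctness of the transformed points $y_i := (-1)^{\ep_i} x_i \in D^1_b$. This produces a homeomorphism $X_{(\ep_1,\dots,\ep_k)} \cong F_k(D^1_b) \cong F_k((-1,1))$. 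Next I would invoke the classical fact that $F_k((-1,1))$ is the complement of the type $A_{k-1}$ braid arrangement inside the cube $(-1,1)^k$: its $k!$ connected components are the open chambers $\{y : y_{\sigma(1)} < \dots < y_{\sigma(k)}\}$ indexed by $\sigma \in S_k$, each of which is convex and hence contractible. Unpacking the definition \eqref{sigmadef} shows that $\sigma_x$ is precisely the permutation labelling the chamber of $F_k((-1,1))$ that contains $(y_1,\dots,y_k)$.

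Combining these two steps, the preimage under the map of the lemma of any tuple $(\ep_1,\dots,\ep_k,\sigma)$ is a single convex chamber inside $X_{(\ep_1,\dots,\ep_k)}$, hence contractible, and these chambers exhaust $F_k[\DD/\Z_2]$. I do not anticipate any serious obstacle beyond bookkeeping: the substance is packaged in the classical contractibility of Weyl chambers, and the only delicate point is checking that the sign-twisted ordering in \eqref{sigmadef} indeed records the chamber label after the identification $D^1_r \cong D^1_b$, rather than its inverse. Once this is verified, the lemma's map is the quotient onto $\pi_0$ of a space whose components are all contractible, so it is a homotopy equivalence.
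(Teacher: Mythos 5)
Your argument is correct, and it supplies exactly the details that the paper leaves implicit (the paper's proof of Lemma~\ref{order1} is simply ``Easy''). The key steps — decomposing by the locally constant colour pattern, using the sign-twisted identification $D^1_r \cong D^1_b$ to turn orbit-disjointness into ordinary distinctness, and then recognising the chambers of the braid arrangement — are the natural way to make the claim precise, and the verification that \eqref{sigmadef} records the chamber label (rather than its inverse) is indeed the only point requiring care.
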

\begin{proof}
	Easy.
\end{proof}
Similarly, to a point $x \in F_1[\DD_*/\Z_2]$ we can assign a value  
\begin{align}
	\ep(x) = \begin{cases} 0 \text{ if } x >0,\\ 1 \text{ if } x < 0. \end{cases} \label{tupledef2}
\end{align}
so that for every $x \in F_k[\DD_*/\Z_2]$ there is a unique $\sigma_x \in S_k$ satisfying Equation \eqref{sigmadef}. 
\begin{lem} \label{order2}
	The map
	\begin{align*}
		&F_k [ \DD_*/\Z_2] \rightarrow \{0,1\}^{\times k } \times S_k, \\
		&x = (x_1,\dots,x_k) \mapsto (\ep (x_1),\dots,\ep (x_k),\sigma_x ) 
	\end{align*}
	defines a homotopy equivalence. 
\end{lem}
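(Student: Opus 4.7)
The plan is to mirror the proof of Lemma \ref{order1} essentially verbatim. Since the target $\{0,1\}^{\times k} \times S_k$ is a discrete space, the map is a homotopy equivalence if and only if it induces a bijection on $\pi_0$ and each preimage is contractible. So the work reduces to (i) identifying the connected components of $F_k[\DD_*/\Z_2]$ and (ii) showing they are contractible.

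First I would unwind the definition: since $\DD_*$ is the open unit interval $D^1$ with the sign action of $\Z_2$, the smooth locus is $(D^1)_{\mathrm{free}} = (-1,0) \cup (0,1)$, and the condition $\Z_2 \cdot x_i \cap \Z_2 \cdot x_j = \emptyset$ for $i \neq j$ becomes $x_i \neq \pm x_j$. In particular the absolute values $|x_1|,\dots,|x_k|$ are pairwise distinct elements of $(0,1)$. Observe that $(-1)^{\ep(x_i)} x_i = |x_i|$ by the sign convention of Equation \eqref{tupledef2}, so $\sigma_x$ is simply the permutation sorting the $x_i$ by absolute value.

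Next, I would fix a tuple $(\ep_1,\dots,\ep_k,\sigma) \in \{0,1\}^{\times k} \times S_k$ and identify the preimage as
\[
\bigl\{(x_1,\dots,x_k) \in (D^1_{\mathrm{free}})^k \;\bigm|\; \mathrm{sign}(x_i) = (-1)^{\ep_i},\; |x_{\sigma(1)}| < \dots < |x_{\sigma(k)}|\bigr\}.
\]
Via $x_i \mapsto (-1)^{\ep_i} x_i = |x_i|$ this is homeomorphic to the open simplex $\{0 < t_1 < \dots < t_k < 1\} \subset \R^k$, which is convex and hence contractible. Thus every fiber is contractible and the map is continuous and surjective onto a set of exactly $2^k \cdot k!$ components.

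No step here is a genuine obstacle; the only mildly careful point is verifying that these fibers are both open and closed in $F_k[\DD_*/\Z_2]$, so that they are indeed the connected components and not merely a contractible partition. This follows because $\ep$ and the ordering of $|x_i|$'s are locally constant on $F_k[\DD_*/\Z_2]$: the sign function is locally constant on the smooth locus, and the strict inequalities $|x_{\sigma(i)}| < |x_{\sigma(i+1)}|$ are preserved under small perturbations since no $|x_i|$ ever coincides with $|x_j|$ on the configuration space.
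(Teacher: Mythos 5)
Your proof is correct and fills in exactly the details that the paper leaves implicit — the paper's own proof of Lemma \ref{order2} is simply the word ``Easy.'' Your argument (fibers are clopen, hence the connected components, and each is homeomorphic to an open simplex via $x_i \mapsto |x_i|$) is the natural unwinding of that claim, and the point you flag at the end — that $\ep$ and the ordering by absolute value are locally constant because $|x_i| \neq |x_j|$ throughout the configuration space — is precisely the observation that makes the assignment well-behaved on $\pi_0$.
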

\begin{proof}
	Easy.
\end{proof}

\section{$\Z_2$-braided pairs} \label{sec:ztd2}

Before we state our definitions we fix notation and remind the reader of some standard definitions in categorical representation theory, for more details and background see \cite{egno15}. A \emph{monoidal category} is a category $\catA$ together with a tensor product functor $\tensor: \catA \times \catA \rightarrow \catA$, a unit object $\1 \in \catA$ and natural isomorphisms
\begin{align*}
\alpha_{X,Y,Z} : (X \tensor Y) \tensor Z \cong X \tensor (Y \tensor Z), \quad
\lambda_X: \1 \tensor X \cong X, \quad
\rho_X: X \tensor \1 \cong X,
\end{align*}
for $X,Y,Z\in \catA$ satisfying the Mac Lane triangle and pentagon axioms. A \emph{braided monoidal category} is a monoidal category $\catA$ together with a natural isomorphism $\sigma$, called \emph{the braiding},
\[ \sigma_{X,Y} : X \tensor Y \cong Y \tensor X\]
for $X,Y\in \catA$ satisfying the Joyal-Street hexagon axioms. A \emph{monoidal functor} between monoidal categories $\catA$ and $\catB$ consists of a functor $\Psi: \catA \rightarrow \catB$ together with natural isomorphisms
\[ (\Psi_2)_{X,Y}: \Psi (X) \tensor \Psi(Y) \cong \Psi( X \tensor Y), \quad \Psi_0: \Psi(\1_\catA) \cong \1_\catB\]
for $X,Y \in \catA$ satisfying unit and associativity axioms. A monoidal functor $\Psi$ between braided monoidal categories is called \emph{braided} if it preserves the braiding i.e. $\Psi(\sigma_\catA) = \sigma_\catB$. 
A (right) \emph{$\catA$-module category} over a monoidal category $\catA$ is a category $\catM$ together with an action functor\footnote{Typically the action functor is denoted $\tensor$, we write $\act$ to avoid confusion with the tensor product on $\catA$.} $\act: \catM \times \catA \rightarrow \catM, (M,X) \mapsto M \tensor X$ and natural isomorphisms
\[ a_{M,X,Y}: (M \tensor X) \tensor Y \cong M \tensor (X\tensor Y), \quad r_M: M \tensor \1 \cong M\]
for $M \in \catM$ and $X,Y \in \catA$ satisfying a unit and associativity axiom. We call a module category $\catM$ \emph{pointed} if it has a distinguished object $\1_\catM \in \catM$.
\begin{nota} For a monoidal category $\catA$ we have the \emph{opposite monoidal category} $\catA^{\tensop}$, which is the category $\catA$ with the reversed order tensor product $\tensor^{op}$ i.e. $X \tensor^{op} Y := Y \tensor X$. 
\end{nota}
\begin{defn} \label{z2pair}
	\begin{enumerate}
		\item An \emph{anti-involution} $\Phi$ of a monoidal category $\catA$ consists of a monoidal functor $(\Phi,\Phi_2,\Phi_0): \catA \rightarrow \catA^{\tensop}$ and a monoidal isomorphism $t: \Phi^2 \Rightarrow \id_\catA$ such that $\Phi(t_X) = t_{\Phi(X)}$.
		\item A \emph{$\Z_2$-monoidal pair} consists of a monoidal category $\catA$  together with an anti-involution $\Phi$, and a pointed $\catA$-module category $\catM$. 
		\item A \emph{$\Z_2$-braided pair} consists of a $\Z_2$-monoidal pair  $(\catA,\catM)$ with the following additional structures. The category $\catA$ is braided monoidal so that $\Phi$ is braided, together with natural isomorphisms $\kappa_{M,X}: M\tensor X \rightarrow M \tensor \Phi(X)$ such that the diagrams in Figures \ref{fig:bp1} and \ref{fig:bp2} commute for all $M\in \catM$ and $X,Y \in \catA$. The natural isomorphism $\kappa$ is called the \emph{$\Z_2$-cylinder braiding}.
	\end{enumerate}
\end{defn}
\begin{figure}[h]
	\adjustbox{scale=.9}{
		\begin{tikzcd}[column sep=huge]
			(M \tensor X) \tensor Y \arrow[d,"a^{-1}"] \arrow[r, "\kappa_{M\tensor X,Y}"] & (M \tensor X) \tensor \Phi (Y) \\
			M \tensor (X \tensor Y) \arrow[d, "\id \tensor \sigma"] & M \tensor (X \tensor \Phi(Y)) \arrow[u,"a"] \\
			M \tensor (Y \tensor X) \arrow[d,"a"] & (M \tensor \Phi(Y)) \tensor X \arrow[u,"\id \tensor \sigma"] \\
			(M \tensor Y) \tensor X \arrow[r,"\kappa_{M,Y}\tensor \id"] & (M\tensor \Phi(Y)) \tensor X \arrow[u, "a^{-1}"]
		\end{tikzcd}}
		\quad \quad \quad $\vcenter{\hbox{\adjustbox{scale=.7}{\begin{tikzpicture}
					\draw[purple,line width=1.2pt] (1.4,0) -- (1.4,3.5);
					\draw[teal,line width=1.2pt] (1.6,0) -- (1.6,3.5);
					\draw[black,line width=1.2pt] (1.5,0) -- (1.5,3.5);
					\begin{knot}[clip width = 3,flip crossing=1,]
					\strand[line width=2pt,purple] (0,0) .. controls +(0,3) and +(0,-3) .. (3,3.5);
					\strand[line width=2pt,teal] (3,0) .. controls +(0,3) and +(0,-3) .. (0,3.5);
					\end{knot}
					\node at (3.2,3.8) {$\Phi(Y)$};
					\node at (3,-.3) {$Y$};
					\node at (1.5,-.3) {$M \otimes X$};
					\end{tikzpicture}}}} \simeq \quad
		\vcenter{\hbox{\adjustbox{scale=.7}{\begin{tikzpicture}
					\draw[black,line width = 1pt] (2.5,-3.5) -- (2.5,0);
					\braid[line width = 2pt, 
					style strands={1}{teal}, 
					style strands={2}{purple},
					style strands={3}{teal},
					style strands={4}{purple}]
					s_1-s_3 s_2 s_1-s_3;
					\node at (2.5,-3.8) {$M$};
					\node at (3,-3.8) {$X$};
					\node at (3,.3) {$X$};
					\node at (4,-3.8) {$Y$};
					\node at (4.2,.3) {$\Phi(Y)$};
					\end{tikzpicture}}}}$
		\caption{Axiom BP1 and a graphical interpretation in terms of coloured braids.}
		\label{fig:bp1}
	\end{figure}
	\begin{figure}[h] 
		\adjustbox{scale=.8}{\begin{tikzcd}
				M \tensor (X \tensor Y) \arrow[r,"\kappa_{M,X\tensor Y}"] \arrow[d,"a"] & M \tensor \Phi(X \tensor Y) & M \tensor (\Phi(Y) \tensor \Phi(X) ) \arrow[l,swap,"\id \tensor \Phi_2"] \\
				(M \tensor X) \tensor Y \arrow[d,"\kappa_{M,X}\tensor \id"] & & (M \tensor \Phi(Y)) \tensor \Phi(X) \arrow[u,"a^{-1}"] \\
				(M \tensor \Phi(X) ) \tensor Y \arrow[d,"a^{-1}"] & & (M \tensor Y) \tensor \Phi(X) \arrow[u,"\kappa_{M,Y}\tensor \id"] \\
				M \tensor( \Phi(X) \tensor Y) \arrow[rr, "\id \tensor \sigma"] & & M \tensor( Y \tensor \Phi(X) ) \arrow[u, "a"] 
			\end{tikzcd}}  \quad
			$\vcenter{\hbox{\adjustbox{scale=.6}{\begin{tikzpicture}
						\draw[black,line width=2pt] (1.5,0) -- (1.5,3.5);
						\begin{knot}[clip width=2,flip crossing=1,]
						\strand[line width=1.3pt,purple,double,double distance between line centers=.3em,] (0,0) .. controls +(0,3) and +(0,-3) .. (3,3.5);
						\strand[line width=1.3pt,teal,double,double distance between line centers=.3em,] (3,0) .. controls +(0,3) and +(0,-3) .. (0,3.5);
						\end{knot}
						\node at (3.2,3.8) {$\Phi(X\otimes Y)$};
						\node at (3,-.3) {$X\otimes Y$};
						\node at (1.5,-.3) {$M$};
						\end{tikzpicture}}}} \simeq \quad
			\vcenter{\hbox{\adjustbox{scale=.55}{\begin{tikzpicture}
						\draw[black,line width = 1pt] (2.5,-3.5) -- (2.5,0);
						\braid[line width = 2pt, 
						style strands={1}{teal}, 
						style strands={2}{teal},
						style strands={3}{purple},
						style strands={4}{purple}]
						s_2 s_1-s_3 s_2;
						\node at (2.5,-3.8) {$M$};
						\node at (3,-3.8) {$X$};
						\node at (3.2,.3) {$\Phi(Y)$};
						\node at (4.2,-3.8) {$Y$};
						\node at (4.4,.3) {$\Phi(X)$};
						\end{tikzpicture}}}}$
\caption{Axiom BP2 and a graphical interpretation in terms of coloured braids.}
	\label{fig:bp2}
\end{figure}

\begin{rmk} \label{rmk:comparison1}
	The definition of a $\Z_2$-braided pair is a $\Z_2$-equivariant version of the \emph{braided module categories} of A. Brochier \cite{brochier13}. A braided module category over a braided monoidal category $\catA$ consists of an $\catA$-module category $\catM$ together with a natural isomorphism $\gamma: M \tensor X \rightarrow M \tensor X$ for $X\in \catA, M\in \catM$ satisfying axioms BP1 and BP2, but where the functor $\Phi = \id_\catA$ is trivial and replacing $\Phi_2$ by the braiding $\sigma$ in axiom BP2. 
\end{rmk}

Essentially all examples of $\Z_2$-braided pairs come from the representation theory of quasi-triangular Hopf algebras, and their coideal subalgebras. 
\begin{nota}
	For an algebra $A$ we denote with $A\Mod$ the category of $A$-modules, and $A\fdmod$ the category of finite dimensional $A$-modules. 
\end{nota}
Recall that for a quasi-triangular Hopf algebra $H$ the category $H\Mod$ is naturally a braided monoidal category. A \emph{coideal subalgebra} $B\subset H$ is a subalgebra so that $\Delta(B) \subset B \tensor H$. The category $B\Mod$ is then naturally a module category over $H\Mod$. 
To obtain a $\Z_2$-braided pair $(H\Mod,B\Mod)$ we need some further structure on $H$ and $B$.

\begin{defn} \label{qtqsp} \cite[Definition 2.7]{kolb17}
	Let $H$ be a quasi-triangular Hopf algebra with universal $R$-matrix $R\in H$, a Hopf algebra involution $\phi: H \rightarrow H$ such that $(\phi \tensor \phi)(R) = R$ and $B\subset H$ a coideal subalgebra. 
	The coideal subalgebra $B$ is called \emph{quasi-triangular} if there exists an invertible element $K \in B \tensor U$ satisfying 
	\begin{align*}
	&K \Delta(b) = (\id \tensor \phi) \Delta(b) K \\
	&(\Delta \tensor \id) (K) = R_{32}^\phi K_{13} R_{23}\\
	&(\id \tensor \Delta) (K) = R_{32} K_{13} R_{23}^\phi K_{12}
	\end{align*}
	Here $R^\phi := \phi \tensor \id (R)$. Note $R^\phi = \id\tensor \phi (R)$ also. We call $K$ the \emph{universal $K$-matrix}.
\end{defn}
The \emph{raison d'etre} of universal $K$-matrices is that they provide solutions to the twisted reflection equation
\[R_{32} K_{13} R_{23}^\phi K_{12} = K_{12} R_{32}^\phi K_{13} R_{23}.\]
Recall that a \emph{balancing} on a braided monoidal category $\catA$ is a natural isomorphism $\theta: \id_\catA \Rightarrow \id_\catA$ satisfying
\begin{align}
\theta_{X\tensor Y} = \sigma_{Y,X} (\theta_Y \tensor \theta_X) \sigma_{X,Y} \label{ribbonaxiom}
\end{align}
for all $M,N \in \catA$. A \emph{ribbon Hopf algebra} is a quasi-triangular Hopf algebra $H$ together with a \emph{ribbon element} i.e. a central invertible element $\nu \in H$ such that the natural isomorphism $\theta: \id_{H\text{-mod}} \Rightarrow \id_{H\Mod}$ defined by acting with $\nu$ gives a balancing on $H\Mod$. 

\begin{rmk} \label{ribbonrmk}
	For a braided monoidal category $\catA$  both $(\id_\catA,\sigma)$ and $(\id_\catA,\sigma^{-1})$ define monoidal functors $\catA \rightarrow \catA^{\tensop}$.
	The balancing axiom \ref{ribbonaxiom} expresses that $\theta: (\id_\catA,\sigma^{-1}) \cong (\id_\catA,\sigma)$ is a monoidal isomorphism. Consequently also $(\id_\catA,\sigma^2) \cong (\id_\catA,\id)$ monoidally.
\end{rmk}

\begin{prop} \label{prop:ztd2ex}
	Let $H$ be a ribbon Hopf algebra, together with a be a Hopf algebra involution $\phi$ that preserves $R$ and $v$, and a quasi-triangular coideal subalgebra $B$. Then $(H\Mod,B\Mod)$ is canonically equipped with the structure of a $\Z_2$-braided pair.
\end{prop}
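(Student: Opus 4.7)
The plan is to produce each piece of a $\Z_2$-braided pair by extracting it directly from the data $(H, R, \nu, \phi, B, K)$, and then to check that the defining relations of $\phi$ and $K$ in Definition \ref{qtqsp} translate, term by term, into the coherence axioms of Definition \ref{z2pair}. This is essentially the content of \cite[Remark 4.13]{balakolb16} and \cite[\S 2.2]{kolb17}, so the real work consists in verifying that our diagrammatic conventions align with their algebraic ones.

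I would begin with the standard structures. The category $\catA := H\Mod$ is braided monoidal with $\sigma_{V,W}(v\tensor w) = \tau(R\cdot(v\tensor w))$. Since $\Delta(B) \subset B \tensor H$, the category $\catM := B\Mod$ is naturally a pointed right $\catA$-module category, with $M \tensor V$ carrying the $B$-action determined by $\Delta|_B$ and with distinguished object $\1_\catM := \kk$, the trivial $B$-module coming from $\ep|_B$. For the anti-involution, pullback along $\phi$ gives a braided monoidal endofunctor of $\catA$ (braided since $(\phi\tensor \phi)(R)=R$, and squaring to the identity up to natural isomorphism since $\phi^2=\id$); composing with the canonical braided equivalence $(\id_\catA, \sigma^{-1}): \catA \to \catA^{\tensop}$ noted in Remark \ref{ribbonrmk} yields the desired anti-braided functor $\Phi$. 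The fact that $\phi$ preserves the ribbon element $\nu$ ensures that the square isomorphism $t: \Phi^2 \cong \id_\catA$ is monoidal and satisfies $\Phi(t) = t_\Phi$.

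For the $\Z_2$-cylinder braiding I would set
\[ \kappa_{M,V}(m\tensor v) := K\cdot(m\tensor v), \]
viewing the image as an element of $M \tensor \Phi(V)$. The intertwining relation $K\Delta(b) = (\id\tensor \phi)\Delta(b)\, K$ is precisely the statement that $\kappa_{M,V}$ is $B$-linear with target $M\tensor \Phi(V)$, and invertibility of $K$ gives invertibility of $\kappa$. Axiom BP1 (Figure \ref{fig:bp1}), which rewrites $\kappa_{M \tensor X, Y}$ by commuting $Y$ past $X$ via $\sigma$, applying $\kappa_{M, Y}$, and commuting $\Phi(Y)$ back, is the categorification of the identity $(\Delta \tensor \id)(K) = R^\phi_{32} K_{13} R_{23}$, using that the $B$-action on $M \tensor X$ is obtained through $\Delta$. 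Axiom BP2 (Figure \ref{fig:bp2}), which decomposes $\kappa_{M, X\tensor Y}$ into $\kappa_{M,X}$ and $\kappa_{M,Y}$ threaded by $R$-matrices, categorifies $(\id \tensor \Delta)(K) = R_{32} K_{13} R_{23}^\phi K_{12}$, once the coherence datum $\Phi_2$ for anti-monoidality of $\Phi$ is identified with the appropriate $R$-factor appearing in the formula.

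I expect the main obstacle to be purely combinatorial bookkeeping: accurately tracking $R_{ij}$ versus $R_{ji}$, $R$ versus $R^\phi$, and the left/right action conventions while passing between the algebraic equations for $K$ and the diagrams in Figures \ref{fig:bp1} and \ref{fig:bp2}, in particular so that the anti-monoidality isomorphism $\Phi_2$ absorbs exactly the correct $R$-factor. Once the conventions are pinned down, no substantive computation remains beyond rearranging the defining identities of the universal $K$-matrix.
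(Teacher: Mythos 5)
Your proposal is correct and takes essentially the same route as the paper: $\Phi$ is $\phi$-pullback made anti-monoidal by a power of the braiding, $t$ comes from the balancing (Remark \ref{ribbonrmk}) with $\phi(\nu)=\nu$ giving $\Phi(t)=t_\Phi$, $\catM=B\Mod$ is the module category with pointing $\1$, and $\kappa$ is action by $K$. In fact you supply more detail than the paper on verifying BP1 and BP2, which the paper simply asserts follows from the coproduct identities for $K$.

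One small discrepancy worth pinning down: you define $\Phi$ by composing $\phi$-pullback with $(\id_\catA,\sigma^{-1}):\catA\to\catA^{\tensop}$, which produces $\Phi_2=\sigma^{-1}$, whereas the paper takes $\Phi_2=\sigma_{\Phi(X),\Phi(Y)}$ directly. Both $(\id,\sigma)$ and $(\id,\sigma^{-1})$ are valid monoidal structures on the identity $\catA\to\catA^{\tensop}$, but only one is compatible with BP2 given the chosen form of $(\id\tensor\Delta)(K)=R_{32}K_{13}R_{23}^\phi K_{12}$; you flag exactly this issue yourself, and carrying out the bookkeeping you describe would force the choice to match the paper's.
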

\begin{proof}
	The category $H\Mod$ is braided monoidal since $H$ is quasi-triangular. The involution $\phi: H \rightarrow H$ defines a functor $\Phi: M \mapsto M^\phi$, where $M^{\phi}$ is the $H$-module where the $H$-action is twisted by $\phi$.
	Since $\phi$ is an Hopf algebra involution that preserves $R$, the functor $(\Phi,\id,\id_\1)$ is a braided monoidal functor. 
	We equip $\Phi$  with the structure of an anti-involution via
	\begin{align*}
	&\Phi_2: \Phi(X) \tensor \Phi(Y) \xrightarrow{\sigma_{\Phi(X),\Phi(Y)}} \Phi(Y) \tensor \Phi(X) = Y^\phi \tensor X^\phi = (Y \tensor X)^\phi = \Phi(Y\tensor X), \\
	&\Phi_0: \Phi(\1) = \1^{\phi} \xrightarrow{\id_{\1}} \1.
	\end{align*} 
	The monoidal functor $(\Phi,\sigma,\id_\1): \catA \rightarrow \catA^{\tensop}$ squares to $(\Phi,\sigma,\id_1)\circ (\Phi,\sigma,\id_\1) = (\id_\catA,\sigma^{2},\id_\1)$. The balancing $\theta$ then gives a monoidal isomorphism $t:\Phi^2 \cong \id_\catA$ by Remark \ref{ribbonrmk}. Since $\phi(\nu) = \nu$ we also have $\Phi(\theta) = \theta_{\Phi}$.
	As $B$ is a right coideal subalgebra the category $B\Mod$ inherits the structure of a right module category over $H\Mod$ with  pointing $\1 \in B\Mod$.\\ 
	Action by the universal $K$-matrix defines the $\Z_2$-cylinder braiding.
\end{proof}

As explained in the introduction, examples are provided by quantum symmetric pairs. 

\begin{rmk} \label{rmk:comparison2}
	Given a braided monoidal category $\catA$, the notion of a $\Z_2$-braided pair $(\catA,\catM)$ is very close to the notion of a $\Phi$-braided module category of \cite{kolb17}. 
	Recall that a \emph{$\Phi$-braided module category} over $\catA$ consists of an $\catA$-module category $\catM$, a braided monoidal equivalence $(\Phi,\id,\id_\1): \catA \rightarrow \catA$ and a family of natural isomorphisms $\kappa_{M,X}: M\tensor X\rightarrow M \tensor \Phi(X)$ for $X\in \catA$ and $M \in \catM$ satisfying the axioms BP1 and BP2 in Figures \ref{fig:bp1} and \ref{fig:bp2}.\footnote{Where $\Phi_2$ is replaced by $\sigma$ in BP2.} \\
	The difference is that in a $\Z_2$-braided pair $\Phi$ is required to be anti-monoidal and involutive. 
	Given a $\Phi$-braided module category $\catM$ over $\catA$ such that the underlying functor $\Phi$ is strictly involutive, then the pair $(\catA,\catM)$ defines a $\Z_2$-braided pair exactly if $\catA$ admits a balancing.\footnote{In that case we can define the anti-involution via $(\Phi,\sigma,\id_\1)$ with $t: \Phi^2 \cong \id$ being the balancing.}
\end{rmk}

\begin{defn}
	A \emph{$\Z_2$-symmetric pair} is a $\Z_2$-braided pair $(\catA,\catM)$ such that $\sigma^2 = \id$, $\kappa^2= \id$.
\end{defn}
Examples of $\Z_2$-symmetric pairs are provided by infinitesimal symmetric pairs.
\begin{ex}
	Let $\g$ be a complex semisimple Lie algebra and let $\theta: \g \rightarrow \g$ be an involution with fixed-points $\g^{\theta}$. We can decompose $\theta = J \phi(-) J^{-1}$ for an outer automorphisms $\phi$ and $J \in G$.\footnote{Here $G$ is the Lie group integrating $\g$. Note that decompositions of involutions of semisimple Lie algebras are not unique in general.}
	Then $(U(\g)\fdmod,U(\g^{\theta})\fdmod)$ is a $\Z_2$-symmetric pair where $\Phi(M) = M^\phi$ and acting with $J$ defines $\kappa$.
\end{ex}

\section{Coherence results} \label{sec:coherence}

A coherence theorem asserts all diagrams of a certain class commute.
Most famously, Mac Lane's coherence theorem for monoidal categories states any diagram\footnote{More precisely, any diagram constructed out of the associator $\alpha$ and unitors $\lambda$ and $\rho$ and $\tensor$.} in a monoidal category $\catA$ commutes. 
An important consequence is that the a-priori different (parenthesized) tensor products $X_1 \tensor \dots \tensor X_n$ of objects $X_1,\dots, X_n \in \catA$ are canonically isomorphic.
In a braided monoidal category $\catA$ coherence is more subtle. It is not true that all diagrams commute, for example $\sigma^2 \neq \id$ in general. 
Thinking of $\sigma$ as the simple braid on two strands provides intuition: the double braid and the trivial braid are different braids.
This can be made precise: one associates a diagram of braids to a diagram in $\catA$ by interpreting $\sigma$ as the generators of a braid group $B_n$. 
The Joyal-Street Coherence Theorem then states that a diagram in a braided monoidal category commutes if the associated diagram of braids commutes \cite{js93}. 
We will now prove a coherence theorem (\ref{intro:coherence}) for $\Z_2$-braided pairs.

\subsection{Coherence in the strict setting}
We will now precisely state the coherence theorem, and prove it in a strict setting. 
Recall that a monoidal category is called \emph{strict} if $\alpha = \id$, $\rho = \id$ and $\lambda=\id$. 
\begin{defn} Let $(\catA,\catM)$ be a $\Z_2$-monoidal pair.
	\begin{enumerate}
		\item We call the $\Z_2$-monoidal pair \emph{strict} if $\catA$ is a strict monoidal category with a strict anti-involution $\Phi$, and $\catM$ is a strict module category.\footnote{Thus $\alpha$, $\lambda$, $\rho$, $a$, $r$, $\Phi_@$,$\Phi_0$ and $t$ are required to be identities.}
		\item We call a $\Z_2$-braided pair (or $\Z_2$-symmetric pair) \emph{strict} if the underlying $\Z_2$-monoidal pair is strict.
	\end{enumerate}
\end{defn}
We have the following simple, but important result:
\begin{lem} \label{relem}
	Let $(\catA,\catM)$ be a strict $\Z_2$-braided pair.
	\begin{enumerate}
		\item The braiding $\sigma$ satisfies the Yang-Baxter equation 
		\[ (\sigma_{Y,Z} \tensor \id_X) \circ (\id_Y \tensor \sigma_{X,Z}) \circ (\sigma_{X,Y} \tensor \id_Z) = (\id_Z \tensor \sigma_{X,Y}) \circ (\sigma_{X,Z} \tensor \id_Y) \circ (\id_X \tensor \sigma_{Y,Z}) \]
		for all $X,Y,Z\in \catA$. 
		\item The $\Z_2$-monodromy $\kappa$ satisfies the twisted reflection equation
		\begin{align*}
		(\id_M \tensor \sigma_{\Phi(Y),\Phi(X)}) \circ (\kappa_{M,Y}\tensor \id_X) &\circ (\id_M \tensor \sigma_{\Phi(X),Y}) \circ (\kappa_{M,X} \tensor \id_Y) \\
		&= \\
		(\kappa_{M,X} \tensor \id_Y) \circ (\id_M \tensor \sigma_{\Phi(Y),X}) &\circ (\kappa_{M,Y} \tensor \id_X)\circ (\id_M \tensor \sigma_{X,Y})
		\end{align*}
		for any $M\in \catM$ and $X,Y \in \catA$.
	\end{enumerate}
\end{lem}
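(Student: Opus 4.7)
My plan is to exploit the strictness hypothesis to turn every diagram in axioms BP1, BP2 and the hexagons into a one-line string rewrite, and then to deduce each identity by a straightforward chain of applications of the relevant axiom together with the naturality of $\sigma$ and $\kappa$.

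For part (1), I would follow the classical Joyal--Street derivation. In the strict setting the hexagon axioms collapse to
\[ \sigma_{X, Y \tensor Z} = (\id_Y \tensor \sigma_{X,Z}) \circ (\sigma_{X,Y} \tensor \id_Z) \]
and its mirror. Applying naturality of $\sigma_{X, -}$ to the morphism $\sigma_{Y,Z}: Y \tensor Z \rightarrow Z \tensor Y$ gives
$(\sigma_{Y,Z} \tensor \id_X) \circ \sigma_{X, Y \tensor Z} = \sigma_{X, Z \tensor Y} \circ (\id_X \tensor \sigma_{Y,Z}),$
and expanding both $\sigma_{X, Y \tensor Z}$ and $\sigma_{X, Z \tensor Y}$ via the hexagon immediately yields the Yang--Baxter equation. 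This is completely standard; no new ideas are required.

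For part (2), I would first repackage axiom BP2 in the strict setting. Since $\Phi_2$ is the identity, BP2 becomes
\[ \kappa_{M, X \tensor Y} = (\kappa_{M, Y} \tensor \id_{\Phi(X)}) \circ (\id_M \tensor \sigma_{\Phi(X), Y}) \circ (\kappa_{M, X} \tensor \id_Y). \]
I would then rewrite the two sides of the twisted reflection equation: by BP2 (with the roles of $X$ and $Y$ interchanged) the right-hand side of the reflection equation equals $\kappa_{M, Y \tensor X} \circ (\id_M \tensor \sigma_{X,Y})$, while by BP2 the left-hand side equals $(\id_M \tensor \sigma_{\Phi(Y), \Phi(X)}) \circ \kappa_{M, X \tensor Y}$. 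Thus the reflection equation is equivalent to the identity
\[ (\id_M \tensor \sigma_{\Phi(Y), \Phi(X)}) \circ \kappa_{M, X \tensor Y} = \kappa_{M, Y \tensor X} \circ (\id_M \tensor \sigma_{X,Y}). \]
Finally, naturality of $\kappa$ in its second variable applied to the morphism $\sigma_{X,Y}: X \tensor Y \rightarrow Y \tensor X$ rewrites the right-hand side as $(\id_M \tensor \Phi(\sigma_{X,Y})) \circ \kappa_{M, X \tensor Y}$, so the whole reflection equation reduces to the single identity $\Phi(\sigma_{X,Y}) = \sigma_{\Phi(Y), \Phi(X)}$.

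The main obstacle (and the only genuine subtlety) is to verify that this last identity is exactly the braided-ness of $\Phi: \catA \rightarrow \catA^{\tensop}$. Unpacking the definition, $\Phi$ being braided means $\Phi(\sigma^\catA_{X,Y}) = \sigma^{\catA^{\tensop}}_{\Phi(X),\Phi(Y)}$; under the convention that the braiding on $\catA^{\tensop}$ is $\sigma^{\catA^{\tensop}}_{A,B} = \sigma^\catA_{B,A}$, and using the strict equality $\Phi(X \tensor Y) = \Phi(Y) \tensor \Phi(X)$, this is precisely the desired identity. I would therefore begin the write-up by fixing this convention once and for all, so that the remainder of the proof is a one-line bookkeeping argument. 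Note that axiom BP1 plays no role in either (1) or (2).
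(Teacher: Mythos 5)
Your proposal is correct and uses exactly the ingredients of the paper's proof: BP2 applied twice, naturality of $\kappa$, and the braided-ness of $\Phi$, with BP1 indeed unused. The paper organises this into a single commutative diagram (two BP2 "octagons" stacked around a naturality square and a $\Phi$-braided square); you unroll the same content into a linear chain of rewrites, and you correctly flag the only real subtlety, namely the convention for the braiding on $\catA^{\tensop}$ that makes "$\Phi$ braided" unpack to $\Phi(\sigma_{X,Y}) = \sigma_{\Phi(Y),\Phi(X)}$.
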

\begin{proof}
	The proof that the braiding satisfies the Yang-Baxter equation is standard, see e.g. \cite[8.1.10]{egno15}. 
	To prove part (2) we observe that in the following diagram
	\begin{center}
		\adjustbox{scale=.75}{
			\begin{tikzcd}
				M \tensor \Phi(Y)  \tensor X \arrow[rr, "\id \tensor \sigma"] & & M \tensor X \tensor \Phi(Y) \arrow[d,"\kappa_{M,X}\tensor \id"] \\
				M \tensor Y \tensor X \arrow[r,"\kappa_{M,Y\tensor X}"] \arrow[u,"\kappa_{M,Y}\tensor \id"] & M \tensor \Phi(Y \tensor X) & M \tensor \Phi(X) \tensor \Phi(Y)  \arrow[l,swap,"\id \tensor \Phi_2"] \\
				M \tensor X \tensor Y \arrow[r,"\kappa_{M,X\tensor Y}"] \arrow[u,"\id \tensor \sigma"]\arrow[d,"\kappa_{M,X}\tensor \id"] & M \tensor \Phi(X \tensor Y) \arrow[u,"\id \tensor \Phi(\sigma)"] & M \tensor \Phi(Y) \tensor \Phi(X)  \arrow[l,swap,"\id \tensor \Phi_2"] \arrow[u,"\id \tensor \sigma"] \\
				M \tensor \Phi(X)  \tensor Y \arrow[rr, "\id \tensor \sigma"] & & M \tensor Y \tensor \Phi(X) \arrow[u,"\kappa_{M,Y}\tensor \id"] 
			\end{tikzcd}
		}
	\end{center}
	the subdiagrams commute by naturality of $\kappa$, the BP2 axiom and the fact that $\Phi$ is a braided monoidal functor. Commutativity of the outer sides follows and expresses that $\kappa$ satisfies the twisted reflection equation.
\end{proof}
Before stating the coherence theorem we introduce some notation and definitions.
\begin{nota}
	For natural isomorphisms $F_1 \xrightarrow{\beta} F_2$,  $G_1\xrightarrow {\gamma} G_2$ with composite functors $G_i \circ F_i$ we write $\gamma * \beta: G_1 \circ F_1 \rightarrow G_2 \circ F_2$ for the \emph{horizontal composition}. For natural isomorphisms $F \xrightarrow{\beta} G \xrightarrow{\gamma} H$ we write $\gamma \circ \beta$ or $\gamma \beta$ for the (vertical) composition $F \rightarrow H$.  
\end{nota}
\begin{defn} Let $(\catA,\catM)$ be a $\Z_2$-braided pair.
	A \emph{structural isomorphism} in $\catA$ (respectively $\catM$) is a natural isomorphism in $\catA$ (respectively $\catM$) constructed as a vertical and horizontal composition of the natural isomorphisms: $\id_{\id_\catA}, \alpha,\lambda,\rho,\id_\Phi, \id_{\tensor}, \Phi_2,\Phi_0,t$ (and $\id_{\id_{\catM}}, \id_{\act}, a, r, \kappa$) and their inverses.
\end{defn}

Any structural isomorphism $f$ in $\catA$ is a natural isomorphism between functors with domain $\catA^{\times n}$ and codomain  $\catA$ for some $n\geq 0$. 
To a structural isomorphism $f$ with a given presentation as a vertical/horizontal composition we can associate its \emph{underlying braid on $n$ strands}, denoted $\beta_f \in B_n$, by interpreting instances of the braiding $\sigma$ as simple braids, and doubling strings for instances of $\id_{\tensor}$. See Figure \ref{fig:underbraid} for two illustrated examples.
\begin{rmk}
	Note that the assignment of the underlying braid depends on the presentation of the structural isomorphism.
\end{rmk}
\begin{figure}[h]
	\begin{tikzpicture}[xscale=.5,yscale=.25]
	\begin{knot}[clip width=2,flip crossing=1,]
	\strand[line width=1.3pt,black,double,double distance between line centers=.3em,] (0,3.5) .. controls +(0,3) and +(0,-3) .. (3,7);
	\end{knot}
	\draw[white,line width=5pt] (1.5,0) -- (1.5,7);
	\draw[black,line width=2pt] (1.5,0) -- (1.5,7);
	\begin{knot}[clip width=2,flip crossing=1,]
	\strand[line width=1.3pt,black,double,double distance between line centers=.3em,] (3,0) .. controls +(0,3) and +(0,-3) .. (0,3.5);
	\end{knot}
	\node at (1.5,-1) {$\kappa * (\id_{\catM},\id_{\tensor}) = \kappa_{- , - \tensor -}$};
	\node at (5,3.5) {$\in \bcyl_2$};
	\end{tikzpicture} \quad \quad \quad \quad \quad
	\begin{tikzpicture}
	\braid[line width = 2pt]
	s_1-s_3;
	\node at (2.5,-2) {$\id_{\tensor} * (\sigma,\sigma) = \sigma \tensor \sigma$};
	\node at (5,-.75) {$\in B_4$};
	\end{tikzpicture}
	\caption{The underlying braids of two structural isomorphisms.}
	\label{fig:underbraid}
\end{figure}
Similarly a structural isomorphism $f$ in $\catM$ is a natural isomorphism between functors with domain $\catM \times \catA^{\times n}$  and codomain  $\catM$ for some $n\geq 0$. 
To such $f$ together with a given presentation we can associate its \emph{underlying cylinder braid on $n$ strands}, denoted $\beta_f \in \bcyl_n$, by interpreting instances of the braiding $\sigma$ and $\Z_2$-cylinder braiding $\kappa$ as the generators $\sigma$ and $\kappa$ in $\bcyl_n$, and doubling strings for instances of $\id_{\tensor}$.
\begin{rmk} \label{braidobservation}
	By axioms BP1 and BP2 of Figures \ref{fig:bp1} and \ref{fig:bp2} one can change the presentation of a structural isomorphism in $\catM$ so that only one string is involved in an instance of the $\Z_2$-cylinder braiding at a time but without changing the underlying braid.
\end{rmk}
\begin{prop} \label{strz2coh} Let $(\catA,\catM)$ be a strict $\Z_2$-braided pair and consider two parallel structural isomorphisms $f,g: T_1 \Rightarrow T_2$ in $\catA$ (or $\catM$). Then $f=g$ if $\beta_f = \beta_g$. 
\end{prop}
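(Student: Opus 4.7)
The plan is to prove the coherence statement by establishing that in the strict setting, any structural isomorphism in $\catM$ corresponds to a well-defined element of $\bcyl_n$ (and similarly in $\catA$ to an element of $B_n$), in such a way that equal underlying braids force equal morphisms. The $\catA$ case is the classical Joyal--Street coherence theorem for strict braided monoidal categories \cite{js93}; alternatively it can be obtained as a special case of the argument below, verifying only the Yang--Baxter relation and the far-commutativity of $\sigma$'s. I therefore focus on the more delicate $\catM$ case.

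First, I would reduce any structural isomorphism $f:T_1 \Rightarrow T_2$ in $\catM$ to a normal form. Applying Remark \ref{braidobservation}, and using that in the strict setting $\alpha$, $\lambda$, $\rho$, $a$, $r$, $\Phi_2$, $\Phi_0$, and $t$ are identities, the presentation of $f$ can be rewritten so that it is a vertical composition of elementary steps, each of which is a horizontal composite of (i) one instance of $\sigma$ acting on a single pair of adjacent strands, or (ii) one instance of $\kappa$ acting on the innermost strand, with identity natural transformations on all other strands. The resulting vertical sequence determines a word $w_f$ in the alphabet $\{\sigma_1,\dots,\sigma_{n-1},\kappa\}$ whose image in $\bcyl_n$ is precisely $\beta_f$.

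Next I would verify that the defining relations of $\bcyl_n$ listed in Proposition \ref{prop:bcyl} lift to equalities between the corresponding normalized structural isomorphisms. The braid relation $\sigma_i \sigma_{i+1} \sigma_i = \sigma_{i+1}\sigma_i\sigma_{i+1}$ is the Yang--Baxter equation from Lemma \ref{relem}(1). The far-commutation relations $\sigma_i \sigma_j = \sigma_j \sigma_i$ for $|i-j|>1$ and $\sigma_i \kappa = \kappa \sigma_i$ for $i>1$ both follow from the interchange law of horizontal and vertical composition: in each case, the two elementary operations involved act on disjoint collections of strands, so their order of composition is immaterial. Finally, the four-term relation $\sigma_1 \kappa \sigma_1 \kappa = \kappa \sigma_1 \kappa \sigma_1$ is exactly the $\Phi$-twisted reflection equation established in Lemma \ref{relem}(2). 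Together these say that the assignment $w \mapsto \overline{w}$, where $\overline{w}$ denotes the structural isomorphism obtained by reading $w$ as a vertical composition of normalized elementary steps, descends to a well-defined map from $\bcyl_n$ to structural isomorphisms with fixed source and target. Hence if $\beta_f=\beta_g$ in $\bcyl_n$ then the normalized presentations yield equal morphisms, i.e.\ $f=g$.

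The main obstacle is the normal-form reduction in the second step: one must carefully use axioms BP1 and BP2 to eliminate any instance of $\kappa$ applied to a non-elementary object (i.e.\ a tensor product or a unit), replacing it by a composition involving $\kappa$ on a single strand together with instances of $\sigma$, all while verifying that this rewriting does \emph{not} change the underlying cylinder braid. Likewise one must absorb all factors of $\Phi_2$, $\Phi_0$, and $t$ (trivial in the strict setting) into a consistent bookkeeping of the source and target functors $T_1$ and $T_2$. Once this normalisation procedure is set up, the remainder of the proof is a direct translation of the relations in $\bcyl_n$ into the known categorical identities of Lemma \ref{relem}.
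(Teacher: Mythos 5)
Your proposal is correct and follows essentially the same approach as the paper's proof: normalise presentations using Remark \ref{braidobservation} so that only one strand braids at a time, relate the two resulting words via the defining relations of $\bcyl_n$, and verify these relations hold categorically via Lemma \ref{relem} (Yang--Baxter and the twisted reflection equation) together with the interchange law for the far-commutation relations. You spell out the normal-form bookkeeping and the relation-by-relation verification in more detail than the paper, which compresses these into a couple of sentences, but the underlying argument is identical.
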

\begin{proof} 
	As both cases are analogous we will only give the proof for $f$ and $g$ in $\catM$,
	The non-trivial components of the presentations of $f$ and $g$ consist of instances of $\sigma$, $\kappa$ and $\id_{\tensor}$, $\id_{\Phi}$. 
	By Remark \ref{braidobservation} we can change the presentation of $f$ and $g$ so that the underlying braids only braid two strings at a time without changing the underlying braid.\footnote{Besides Remark \ref{braidobservation} we are using the analogous observation about the hexagon axioms for the braiding.} 
	As $\beta_f = \beta_g$ one can relate the presentations of the braids $\beta_f$ and $\beta_g$ via repeated applications of the cylinder braid relations \ref{eq:bcyl1} and \ref{eq:bcyl2}.
	Using that $\Phi$ is braided and strict we can then also rewrite the presentations of $f$ and $g$ into each other by applying the cylinder braid relations. 
	By Lemma \ref{relem} it follows that $f=g$.
\end{proof}

We will now prove coherence for general $\Z_2$-braided pairs by showing that one can replace a $\Z_2$-braided pair (up to appropriate equivalence) by a strict $\Z_2$-braided pair. 

\subsection{Strictification of the braided monoidal category}

The strictification of $\catA$ will move in two steps: first strictifying the tensor product on $\catA$ and then strictifying the involution $\Phi$. 
\begin{nota} Let $\catA$ be a monoidal category, recall that $\catA$ can be seen as a right $\catA$-module category in a canonical way. We denote this module category by $\catA_\catA$. For two right $\catA$-module categories $\catM_1$ and $\catM_2$ we denote by $\Fun_\catA(\catM_1,\catM_2)$ the category of right $\catA$-module functors, and morphisms of right $\catA$-module functors. For definitions see \cite[ch. 7]{egno15}.
\end{nota} 
\begin{defn} Let $\catA$ be a monoidal category. We denote by $\catA_{st}$ the strict monoidal category $\Fun_\catA(\catA_\catA,\catA_\catA)$. The tensor product on objects is given by composition of functors (denoted by $\circ$), with unit object the identity functor. The tensor product on morphisms is given by horizontal composition of natural transformations (denoted by $\hcomp$).  
\end{defn}
\begin{prop} \cite[Proposition 1.3]{js93} \label{strictifymon}
	Let $\catA$ be a monoidal category. The monoidal functor $(L,L_2,L_0): \catA \rightarrow \catA_{st}$ defined by $L(X) = (X\tensor -,\alpha_{X,-,-})$, $(L_2)_{X,Y} = \alpha_{X,Y,-}$, and $L_0 = l_-$ is a monoidal equivalence.
\end{prop}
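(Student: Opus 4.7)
The plan is to verify that $L$ is a monoidal functor and an equivalence of categories, following the standard strictification argument. The proof splits naturally into four steps: well-definedness of $L$ on objects and morphisms, verification of the monoidal coherence, full faithfulness, and essential surjectivity.

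First, for each $X \in \catA$ I would check that $(X \tensor -, \alpha_{X,-,-})$ genuinely defines a right $\catA$-module endofunctor of $\catA_\catA$: the module associativity coherence is exactly the pentagon in $\catA$, and the unit coherence is the triangle axiom. On morphisms, $L(f) := f \tensor \id_{-}$ is evidently a module natural transformation, and functoriality is immediate. The natural isomorphism $(L_2)_{X,Y} = \alpha_{X,Y,-}$ is a morphism in $\catA_{st}$ by naturality of $\alpha$ in its last argument combined with the pentagon, and the monoidal coherence hexagons for $(L, L_2, L_0)$ reduce directly to the pentagon and triangle identities in $\catA$.

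For full faithfulness, the key observation is that any right $\catA$-module natural transformation $\eta: L(X) \Rightarrow L(Y)$ is determined by its component at the unit object: the module naturality square at $(\1, Z)$ together with the unitor $\lambda_Z: \1 \tensor Z \cong Z$ forces $\eta_Z$ to agree with $\eta_\1 \tensor \id_Z$ up to canonical unit isomorphisms. This yields an inverse bijection $\Hom_{\catA_{st}}(L(X), L(Y)) \to \Hom_\catA(X, Y)$ given by $\eta \mapsto \rho_Y \circ \eta_\1 \circ \rho_X^{-1}$ (or its evident variant, depending on the placement of units).

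For essential surjectivity, given any object $(F, s) \in \catA_{st}$ with module structure $s_{Z,X}: F(Z) \tensor X \cong F(Z \tensor X)$, I would construct a module natural isomorphism $L(F(\1)) \cong F$ whose component at $X$ is $L(F(\1))(X) = F(\1) \tensor X \xrightarrow{s_{\1, X}} F(\1 \tensor X) \xrightarrow{F(\lambda_X)} F(X)$. The main obstacle lies here: one must check this assembles into a genuine morphism in $\catA_{st}$, i.e.\ that it is compatible with the module structures on $F$ and on $L(F(\1))$. This reduces, after a brief diagram chase, to the pentagon-type coherence built into the definition of the module structure $s$ together with the triangle axiom. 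Everything else is routine bookkeeping.
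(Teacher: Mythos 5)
Your argument is correct and is essentially the standard strictification proof from Joyal--Street, which the paper simply cites (as \cite[Proposition 1.3]{js93}) without reproducing. The details you flag as requiring checking---that full faithfulness reduces to evaluating at $\1$ via the module-naturality square and the triangle identity, and that essential surjectivity reduces to the module-functor pentagon for $s$ together with $\lambda_{Z\tensor X}\circ\alpha_{\1,Z,X}=\lambda_Z\tensor\id_X$ and naturality of $s$---all go through as you indicate.
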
 
\begin{conv}
	We will use the following fact implicitly throughout: If $F$ is a monoidal functor so that the functor underlying $F$ is an equivalence, then any quasi-inverse $F^{-1}$ is naturally monoidal, and the natural isomorphisms $1 \Rightarrow F^{-1}F$ and $FF^{-1} \Rightarrow 1$ are monoidal isomorphisms \cite[2.4.10]{egno15}. Moreover, the natural isomorphisms can be chosen to satisfy the triangle identities of an adjunction \cite[\S4.4]{maclane97}.
\end{conv}
We now transport the anti-involution along the equivalence to the strict category.

\begin{lem} \label{strinv} 1. Let $\alpha: F_1 \rightarrow F_2$ be a monoidal isomorphism, and let $H,G$ be monoidal functors. The natural isomorphism $\id_H* \alpha *\id_G: HF_1G \Rightarrow HF_2G$ is monoidal.\\
2. Let $F: \catA \rightarrow \catB$ be an equivalence of monoidal categories, $\Phi: \catA \rightarrow \catA^{\tensop}$ a monoidal functor and $t: \Phi^2 \Rightarrow \id_\catA$ an isomorphism of monoidal functors. Then there exists a anti-involution $\Psi: \catB \rightarrow \catB^{\tensop}$, $\pi: \Psi^2 \Rightarrow \id_\catB$ such that $\Psi \circ F \cong F \circ \Phi$ are monoidally isomorphic and the diagram
\adjustbox{scale=.8}{
	\begin{tikzcd}
		F \Phi^2 \arrow[d] \arrow[r, "F(t)"]  & F \arrow[d,"id"] \\
		\Psi^2F \arrow[r,"\pi_F"] & F
	\end{tikzcd}}
	commutes.
\end{lem}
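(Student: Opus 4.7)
For part (1), the claim is a routine diagram chase. Writing out $(HF_1G)_2$ and $(HF_2G)_2$ explicitly in terms of the structure morphisms $H_2, (F_i)_2, G_2$, the compatibility square for $\id_H * \alpha * \id_G$ decomposes into three subsquares. Two of these commute by the naturality of $H_2$ and $G_2$, and the third commutes because $\alpha$ is itself a monoidal isomorphism. The unit compatibility is verified in the same style using $H_0, (F_i)_0, G_0$. In short, horizontal composition of monoidal natural transformations with identities of monoidal functors is monoidal.

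For part (2), fix a quasi-inverse monoidal equivalence $F^{-1}: \catB \to \catA$ together with monoidal isomorphisms $\eta: \id_\catB \Rightarrow FF^{-1}$ and $\ep: F^{-1}F \Rightarrow \id_\catA$ satisfying the triangle identities (possible by the convention preceding the lemma). Define
\[ \Psi := F \circ \Phi \circ F^{-1} : \catB \to \catB^{\tensop}, \]
which is a composite of monoidal functors (using that $F$ induces a monoidal functor $\catA^{\tensop} \to \catB^{\tensop}$) and hence monoidal. The isomorphism $\Psi \circ F = F\Phi F^{-1} F \xrightarrow{\id_{F\Phi} * \ep} F \Phi$ supplies the desired monoidal isomorphism $\Psi F \cong F\Phi$; it is monoidal by part (1), since $\ep$ is. Define $\pi: \Psi^2 \Rightarrow \id_\catB$ as the composite
\[ \Psi^2 = F\Phi F^{-1} F \Phi F^{-1} \xrightarrow{\id_{F\Phi} * \ep * \id_{\Phi F^{-1}}} F \Phi^2 F^{-1} \xrightarrow{\id_F * t * \id_{F^{-1}}} F F^{-1} \xrightarrow{\eta^{-1}} \id_\catB, \]
which is again a vertical composition of monoidal isomorphisms by (1). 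Here we implicitly identify $\Psi \circ \Psi : \catB \to \catB$ as a monoidal endofunctor (the ``anti''-structures cancel), in parallel with $\Phi^2$.

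It remains to verify that the stated triangle commutes and that $(\Psi,\pi)$ is an anti-involution. For the triangle, whiskering $\pi$ by $\id_F$ unfolds $\pi_F : \Psi^2 F \Rightarrow F$ into three stages: an $\ep$-stage on the inner $F^{-1}F$, the $t$-stage $F t F^{-1} * \id_F$, and an $\eta^{-1}$-stage on the outer $FF^{-1}F$. The left vertical isomorphism $F\Phi^2 \cong \Psi^2 F$ is itself built by inserting two copies of $\ep^{-1}$, and the inner $\ep$-stage of $\pi_F$ cancels one of these formally. The remaining $\eta^{-1} * \id_F$ following the outer $\ep^{-1}$ collapses to $\id_F$ by the triangle identity $(\id_F * \ep) \circ (\eta * \id_F) = \id_F$, leaving exactly $F(t) = \id_F * t * \id_{\id_\catA}$, as required. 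Finally, the relation $\Psi(\pi_X) = \pi_{\Psi(X)}$ follows by combining the hypothesis $\Phi(t_X) = t_{\Phi(X)}$ with the naturality of $\eta$ and the triangle identities; the main obstacle throughout is bookkeeping rather than mathematical content, and is systematically handled by (1) ensuring that every natural isomorphism introduced is monoidal.
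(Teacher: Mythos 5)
Your proposal is correct and follows essentially the same route as the paper's proof: part (1) is a routine verification, $\Psi$ is defined as $F\Phi F^{-1}$, and $\pi$ is given by exactly the composite the paper writes down (your formula matches the paper's after swapping the roles of $\eta$ and $\ep$, since you chose the opposite convention for which is $\id_\catB \Rightarrow FF^{-1}$ and which is $F^{-1}F \Rightarrow \id_\catA$). The paper dismisses the verifications of the triangle diagram and the anti-involution condition $\Psi(\pi_X)=\pi_{\Psi(X)}$ with ``one easily verifies,'' so your added diagram chase for the triangle --- cancelling the inner $\ep/\ep^{-1}$ pair by interchange and then collapsing the outer $\ep^{-1}$ against $\eta^{-1}*\id_F$ via the (inverted) triangle identity --- is a genuine supplement rather than a divergence, and it checks out.
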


\begin{proof}
	The first statement is an easy verification. 
	For the second claim we define $\Psi := F \Phi F^{-1}$. As a composite of monoidal functors it is monoidal. Similarly, as a composite of adjoint equivalences it is an adjoint equivalence. 
	One easily verifies the natural isomorphism $\pi$ can defined as the composite
	\[ 
	\Psi^2 = F \Phi F^{-1} F \Phi F^{-1} \xrightarrow{\id_{F\Phi}* \eta^{-1}} F \Phi \Phi F^{-1} \xrightarrow{\id_F *t*\id_{F^{-1}}} F F^{-1} \xrightarrow{\ep} \id_\catA.
	\]
\end{proof}

Next we will strictify the anti-involution using a method adapted from \cite{calindo16}.
\begin{defn}
	Let $\catA$ be a strict monoidal category, with an anti-involution $(\Psi,\psi,\psi_0)$, $\pi: \Psi^2 \Rightarrow \id_\catA$. The category $\catA^{\Z_2}$ has as objects quadruples 
	\[(X_0,X_1, \eta_0: X_1 \rightarrow \Psi(X_0), \eta_1: X_0 \rightarrow \Psi(X_1))\] 
	where $X_i \in \catA$ and $\eta_i$ are isomorphisms in $\catA$ such that the left diagram in Figure \ref{eq:calindo} commutes for $i=0,1$.
	A morphism 
	\[\vec{f} = \begin{bmatrix} f_0 \\ f_1 \end{bmatrix}: (X_0,X_1,\eta_0,\eta_1) \rightarrow (Y_0,Y_1,\nu_0,\nu_1)\] consists of a pair $f_i: X_i \rightarrow Y_i$ of morphisms in $\catA$ such that the right diagram of Figure \ref{eq:calindo} commutes for $i=0,1$. 
	\begin{figure}[h]
		$\begin{tikzcd}
		X_{i+1} \arrow[d, "\eta_{i}"]\arrow[r,"\pi^{-1}"] & \Psi^2 (X_{i+1}) \\
		\Psi(X_{i})  \arrow[ru, swap, "\Psi(\eta_{i+1})"] &
		\end{tikzcd}$ \quad \quad \quad 
		$\begin{tikzcd}
		X_{i+1}  \arrow[d,"\eta_i"]\arrow[r,"f_{i+1}"] & Y_{i+1} \arrow[d,"\nu_i"] \\
		\Phi(X_i) \arrow[r, "\Phi(f_i)"] & \Phi(Y_i)
		\end{tikzcd}$
		\caption{Object and morphism diagrams in $\catA^{\Z_2}$.}
		\label{eq:calindo}
	\end{figure}
\end{defn}
\begin{nota}
	We will write $\vec{X}$ for a quadruple $(X_0,X_1,\eta_0,\eta_1)$, or use vector notation $(\begin{bmatrix}
	X_0 \\ X_1
	\end{bmatrix}, \begin{bmatrix}
	\eta_0 \\ \eta_1
	\end{bmatrix}$). We will sometimes refer to $X_i$ (resp. $\eta_i$) the object (resp. morphism) components of $\vec{X}$.
\end{nota}
\begin{lem}
	The category $\catA^{\Z_2}$ is strictly monoidal with tensor product \[\vec{X} \tensor \vec{Y} := \left(\begin{bmatrix} X_0 \tensor Y_0 \\ Y_1 \tensor X_0 \end{bmatrix}, \begin{bmatrix}  Y_1 \tensor X_1 \xrightarrow{\nu_0 \tensor \eta_0} \Phi(Y_0) \tensor \Phi(X_0) \xrightarrow{\phi}   \Phi(X_0 \tensor Y_0)\\ 
	X_0 \tensor Y_0 \xrightarrow{\eta_1 \tensor \nu_1} \Phi(X_1) \tensor \Phi(Y_1)   \xrightarrow{\phi}\Phi(Y_1 \tensor X_1) \end{bmatrix}\right),\]
	\[\vec{f} \tensor \vec{g} := \begin{bmatrix} f_0 \tensor g_0 \\ g_1 \tensor f_1 \end{bmatrix}\]
	and unit $\vec{\1} := (\1,\1,\phi_0,\phi_0)$. Moreover, the functor $\Psi_{st}: \catA^{\Z_2} \rightarrow \catA^{\Z_2}$ defined by 
	\begin{align*}
	\Psi_{st}\left(\begin{bmatrix}
	X_0 \\ X_1
	\end{bmatrix}, \begin{bmatrix}
	\eta_0 \\ \eta_1
	\end{bmatrix}\right) 
	= 
	\left(\begin{bmatrix}
	X_1 \\ X_0
	\end{bmatrix}, \begin{bmatrix}
	\eta_1 \\ \eta_0
	\end{bmatrix}\right)  \quad \text{ and } \quad 
	\Psi_{st}\begin{bmatrix} f_0 \\ f_1\end{bmatrix} = \begin{bmatrix} f_1 \\ f_0\end{bmatrix}
	\end{align*}
	is a strict anti-monoidal functor such that $\Psi_{st}^2 = \id$ is a monoidal isomorphism.
\end{lem}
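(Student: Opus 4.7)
The plan is to verify each assertion by direct computation, leveraging the strictness of $\catA$, the anti-monoidal structure $(\Psi,\psi,\psi_0)$, and the monoidality of $\pi:\Psi^2 \Rightarrow \id_\catA$. First I would check that $\vec{X} \otimes \vec{Y}$ is actually a well-defined object of $\catA^{\Z_2}$, i.e.\ that the proposed morphism components $\eta_0^Z, \eta_1^Z$ (where $Z_0 = X_0 \otimes Y_0$, $Z_1 = Y_1 \otimes X_1$) satisfy the left diagram of Figure~\ref{eq:calindo}. The core identity is $\Psi(\eta_1^Z) \circ \eta_0^Z = \pi_{Z_1}^{-1}$. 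Unpacking and applying naturality of $\psi$ lets me move $\Psi(\eta_1 \otimes \nu_1)$ past $\psi_{X_0,Y_0}$, yielding
\[
\Psi(\psi_{Y_1,X_1}) \circ \psi_{\Psi(X_1),\Psi(Y_1)} \circ \bigl( (\Psi(\eta_1)\nu_0) \otimes (\Psi(\nu_1)\eta_0)\bigr).
\]
Using the defining identity of $\vec{X},\vec{Y}$, the rightmost factor becomes $\pi_{Y_1}^{-1} \otimes \pi_{X_1}^{-1}$. The remaining prefix is precisely the monoidal structure $(\Psi^2)_2$ of the functor $\Psi^2$ at $(Y_1,X_1)$, and the monoidality of $\pi$ rewrites it as $\pi^{-1}_{Y_1 \otimes X_1} \circ (\pi_{Y_1} \otimes \pi_{X_1})$, which cancels the $\pi^{-1}_{Y_1} \otimes \pi^{-1}_{X_1}$ to leave $\pi^{-1}_{Z_1}$, as required. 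The symmetric check for $i=1$ is identical after relabelling.

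Next I would verify that $\vec{f} \otimes \vec{g}$ is a morphism in $\catA^{\Z_2}$ (right diagram of Figure~\ref{eq:calindo}), which follows immediately from naturality of $\psi$ plus the assumption that $\vec{f}$ and $\vec{g}$ individually satisfy that diagram. Bifunctoriality of $\otimes$ on $\catA^{\Z_2}$ is inherited from bifunctoriality on $\catA$. For strict associativity, the object components of both $(\vec{X}\otimes \vec{Y}) \otimes \vec{Z}$ and $\vec{X} \otimes (\vec{Y}\otimes \vec{Z})$ are $(X_0 \otimes Y_0 \otimes Z_0,\, Z_1 \otimes Y_1 \otimes X_1)$ by strict associativity in $\catA$; the two formulas for the morphism components agree because both reduce, via the hexagon coherence of $(\Psi,\psi)$ as a monoidal functor to $\catA^{\otimes op}$, to the same composite built out of $\eta_*,\nu_*,\xi_*$ and nested instances of $\psi$. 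The unit laws $\vec{\1} \otimes \vec{X} = \vec{X} = \vec{X} \otimes \vec{\1}$ reduce, after applying strictness of $\catA$, to the unit axioms $\psi_{X,\1}\circ (\psi_0 \otimes \id) = \id$ and $\psi_{\1,X}\circ(\id \otimes \psi_0)=\id$ of the monoidal functor $\Psi$.

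For $\Psi_{st}$: it is clearly a functor (swapping components preserves the object and morphism conditions up to swapping $i$ and $i+1$, which is harmless since both $i=0,1$ cases are imposed). To see $\Psi_{st}$ is strictly anti-monoidal, I would directly compare $\Psi_{st}(\vec{X}\otimes \vec{Y})$ with $\Psi_{st}(\vec{Y}) \otimes \Psi_{st}(\vec{X})$: their object components are both $(Y_1 \otimes X_1,\, X_0 \otimes Y_0)$, and their morphism components are, in both cases, $\psi \circ (\eta_1 \otimes \nu_1)$ and $\psi \circ (\nu_0 \otimes \eta_0)$ after tracking the swaps. Finally $\Psi_{st}^2 = \id$ on the nose because swapping component indices twice recovers the original quadruple, and $\Psi_{st}(\vec\1) = \vec\1$ trivially, so $\id: \Psi_{st}^2 \Rightarrow \id$ is a (strict) monoidal isomorphism.

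The main obstacle is the well-definedness check for $\vec{X}\otimes \vec{Y}$ as an object, because it is the only place where the monoidality of $\pi$ is genuinely used; everything else is a bookkeeping exercise that becomes routine once one is careful about the direction of $\psi$ (living in $\catA^{\otimes op}$, so rendered as $\psi_{A,B}:\Psi(B)\otimes \Psi(A) \to \Psi(A\otimes B)$ in $\catA$) and keeps track of which index is being swapped by $\Psi_{st}$.
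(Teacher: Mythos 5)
Your proof is correct and follows exactly the route that the paper's one-line ``straightforward calculation'' implicitly has in mind: unwind the definitions, reduce the object condition for $\vec{X}\otimes\vec{Y}$ to monoidality of $\pi:\Psi^2\Rightarrow\id$, reduce the morphism condition and strict associativity to naturality and the associativity axiom for $(\Psi,\psi)$, and observe that $\Psi_{st}$ literally swaps components. You correctly isolate the one nontrivial point, namely that the object diagram for $\vec{X}\otimes\vec{Y}$ is where monoidality of $\pi$ enters.

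One small slip: after applying naturality of $\psi$ to move $\Psi(\eta_1\otimes\nu_1)$ past $\psi_{X_0,Y_0}$ you should land on
\[
\Psi(\psi_{Y_1,X_1})\circ\psi_{\Psi(X_1),\Psi(Y_1)}\circ\bigl((\Psi(\nu_1)\nu_0)\otimes(\Psi(\eta_1)\eta_0)\bigr),
\]
not on $(\Psi(\eta_1)\nu_0)\otimes(\Psi(\nu_1)\eta_0)$ as written (that composite is not even well-typed, since $\Psi(\eta_1)$ has source $\Psi(X_0)$ while $\nu_0$ has target $\Psi(Y_0)$). Your very next line, which identifies the tensor factor with $\pi_{Y_1}^{-1}\otimes\pi_{X_1}^{-1}$ via the object conditions $\Psi(\nu_1)\nu_0=\pi_{Y_1}^{-1}$ and $\Psi(\eta_1)\eta_0=\pi_{X_1}^{-1}$, makes clear you had the correct expression in mind; it is a transcription typo rather than a logical gap. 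Also, what you call the ``hexagon coherence'' of $(\Psi,\psi)$ in the associativity check is really the associativity axiom of a monoidal functor (a square, not a braided hexagon), but the cited identity is the right one.
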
 
\begin{proof}
	A straightforward calculation.
\end{proof}
We can now prove a strictification theorem for monoidal categories with an anti-involution.
\begin{prop} \label{strmoninv} 1. The functor 
	\begin{align*}
	\Omega: &\catA \rightarrow \catA^{\Z_2}\\
	&X \mapsto (X,\Psi(X),\id_{\Psi(X)},\pi^{-1}_X), \quad \quad f \mapsto \begin{bmatrix} f \\ \Psi(f) \end{bmatrix}
	\end{align*}
	defines an equivalence of categories.\\ 
	2. The maps 
	\begin{align*}
	&(\Omega_2)_{X,Y} := \begin{bmatrix} \id \\ \Psi_2 \end{bmatrix} : \Omega(X)\tensor \Omega(Y) \rightarrow \Omega(X\tensor Y) \\
	&\Omega_0:= \begin{bmatrix} id \\ \Psi_0 \end{bmatrix}: \vec{\1} \rightarrow \Omega(\1)
	\end{align*}
	equip $\Omega$ with the structure of a monoidal functor. \\
	3. The natural transformation $\begin{bmatrix} \id \\ \pi \end{bmatrix}: \Psi_{st} \circ \Omega \Rightarrow \Omega \circ \Psi$ defines a monoidal isomorphism such that the diagram
	\adjustbox{scale=.85}{
		\begin{tikzcd}
			\Omega \Psi^2 \arrow[d] \arrow[r, "\Omega \pi"]  & \Omega \arrow[d,"id"] \\
			\Psi_{st}^2 \Omega \arrow[r,"\id"] & \Omega
		\end{tikzcd}}
commutes.
\end{prop}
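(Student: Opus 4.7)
I will prove the three claims in order, exploiting the fact that morphisms in $\catA^{\Z_2}$ are rigidly determined once one component is fixed.

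\textbf{Plan for (1), equivalence.} The key observation is that a morphism $\vec f:\Omega(X)\to\Omega(Y)$ has $\eta_0=\id_{\Psi(X)}$, $\nu_0=\id_{\Psi(Y)}$, so the $i=0$ coherence square forces $f_1=\Psi(f_0)$; the $i=1$ square then holds automatically by naturality of $\pi$. Hence $f_0\mapsto \begin{bmatrix}f_0\\\Psi(f_0)\end{bmatrix}$ is a bijection $\Hom_\catA(X,Y)\to\Hom_{\catA^{\Z_2}}(\Omega X,\Omega Y)$, giving fully faithfulness. For essential surjectivity, given $\vec X=(X_0,X_1,\eta_0,\eta_1)$, I would produce the candidate isomorphism $\vec h:=\begin{bmatrix}\id_{X_0}\\ \eta_0\end{bmatrix}:\vec X\to \Omega(X_0)$ and check both morphism squares in Figure 4.3: the $i=0$ square is $\id_{\Psi(X_0)}\circ\eta_0=\Psi(\id_{X_0})\circ\eta_0$ (trivial), and the $i=1$ square $\pi^{-1}_{X_0}\circ\id_{X_0}=\Psi(\eta_0)\circ\eta_1$ is precisely the object condition imposed on $\vec X$ (the left diagram of Figure 4.3 with $i=1$), rewritten using invertibility of $\pi$.

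\textbf{Plan for (2), monoidal structure.} First I would verify that $(\Omega_2)_{X,Y}=\begin{bmatrix}\id\\ \Psi_2\end{bmatrix}$ and $\Omega_0=\begin{bmatrix}\id\\ \Psi_0\end{bmatrix}$ are genuine morphisms in $\catA^{\Z_2}$; the $i=0$ coherence squares reduce to the identity $\psi_{X,Y}=\Psi(\id)\circ\psi_{X,Y}$ and are trivial. The $i=1$ square for $\Omega_2$ reads
\[
\pi^{-1}_{X\tensor Y}\;=\;\Psi(\psi_{X,Y})\circ \psi_{\Psi(X),\Psi(Y)}\circ(\pi^{-1}_X\tensor \pi^{-1}_Y),
\]
which is exactly the statement that $\pi:\Psi^2\Rightarrow\id_\catA$ is a monoidal natural isomorphism (compatibility of $\pi$ with $\Psi_2$), a hypothesis we carry along from Lemma \ref{strinv}. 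Naturality in $X,Y$ is immediate from naturality of $\Psi_2$. The unit and associativity hexagons for $(\Omega,\Omega_2,\Omega_0)$ decompose componentwise: the first coordinate reduces to the triangle/pentagon in the strict category $\catA^{\Z_2}$ (trivial), while the second coordinate is precisely the unit and associativity axioms for the monoidal functor $(\Psi,\Psi_2,\Psi_0)$.

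\textbf{Plan for (3), monoidal isomorphism and compatibility square.} Unpacking, $\Psi_{st}\Omega(X)=(\Psi(X),X,\pi^{-1}_X,\id_{\Psi(X)})$ and $\Omega\Psi(X)=(\Psi(X),\Psi^2(X),\id_{\Psi^2(X)},\pi^{-1}_{\Psi(X)})$; the first component of the comparison is $\id_{\Psi(X)}$ and the second is $\pi^{-1}_X$. The two morphism squares reduce to trivial identities and to $\pi^{-1}_{\Psi(X)}=\Psi(\pi^{-1}_X)$, which is the axiom $\Phi(t)=t_\Phi$ transferred through the equivalence. Naturality in $X$ comes from naturality of $\pi$. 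To verify that it is a \emph{monoidal} natural isomorphism I will check the hexagon involving $\Psi_{st}\Omega\otimes\Psi_{st}\Omega$ and $\Omega\Psi\otimes\Omega\Psi$; componentwise, the nontrivial content is once again compatibility of $\pi$ with $\Psi_2$. Finally, the required commutative square is checked componentwise: the first coordinate is the identity on both sides, while the second coordinate reads $\pi_X\circ\pi^{-1}_X=\id_X$.

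\textbf{Expected main obstacle.} The one step that is not formal is the $i=1$ coherence for $(\Omega_2)_{X,Y}$, which is the place where the anti-involutive datum $(\Psi,\Psi_2,\pi)$ interacts nontrivially with the twisted tensor product on $\catA^{\Z_2}$. Everything else is essentially bookkeeping: the rigidity of morphisms in $\catA^{\Z_2}$ (the second component is determined by the first through $\Psi$ and $\eta_\bullet$) propagates through the proof and reduces every diagram chase to either a coherence axiom for $(\Psi,\Psi_2,\Psi_0,\pi)$ or a tautology in the strict category $\catA$.
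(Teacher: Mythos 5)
Your plan follows the same route as the paper for parts (1) and (2) and for most of part (3): the fully-faithfulness argument via rigidity of the second morphism component, the candidate isomorphism $\begin{bmatrix}\id\\\eta_0\end{bmatrix}:\vec X \to \Omega(X_0)$ for essential surjectivity, and the reduction of the $i=1$ coherence square for $\Omega_2$ to monoidality of $\pi$ all match the paper's argument; where the paper is terse (part 2, which it dismisses as an easy check), your expansion is correct and fills in exactly the right content.

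However, the very last step of part (3) contains a genuine computational error. Write the comparison isomorphism in the paper's direction $\mu = \begin{bmatrix}\id\\\pi\end{bmatrix}: \Omega\Psi \Rightarrow \Psi_{st}\Omega$. The unlabelled vertical arrow $\Omega\Psi^2 \to \Psi_{st}^2\Omega$ in the square is the composite $\Psi_{st}(\mu_X)\circ\mu_{\Psi X}$; unpacking, $\mu_{\Psi X}$ has components $\begin{bmatrix}\id_{\Psi^2 X}\\\pi_{\Psi X}\end{bmatrix}$, and since $\Psi_{st}$ swaps components, $\Psi_{st}(\mu_X)=\begin{bmatrix}\pi_X\\\id_{\Psi X}\end{bmatrix}$; so the composite is $\begin{bmatrix}\pi_X\\\pi_{\Psi X}\end{bmatrix}$, not the identity. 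Meanwhile the top-right path is $\Omega(\pi_X)=\begin{bmatrix}\pi_X\\\Psi(\pi_X)\end{bmatrix}$. Commutativity in the first coordinate is the tautology $\pi_X = \pi_X$ (not $\id = \id$), and in the second coordinate it reduces to $\Psi(\pi_X)=\pi_{\Psi X}$ — i.e.\ precisely the anti-involution axiom $\Phi(t_X)=t_{\Phi(X)}$ inherited by $\Psi$. This is exactly the equality the paper records as the crux of the final check. Your claim that the first coordinate is $\id$ on both sides and the second reads $\pi_X\circ\pi^{-1}_X=\id_X$ is therefore wrong: the anti-involution axiom is indispensable not only in verifying that the components of $\mu$ are morphisms in $\catA^{\Z_2}$ (which you note correctly) but also, separately, in the commutativity of the final square.
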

\begin{proof}
Let us first prove $\Omega$ defines an equivalence. To see $\Omega(X)$ defines an object, we observe that indeed $\Psi(\id) \circ \pi^{-1}_X = \pi^{-1}_X$ and $\Psi(\pi_X^{-1}) \circ \id = \pi^{-1}_{\Psi(X)}$. That $\Omega(f)$ defines a morphism is clear. To show $\Omega$ is an equivalence we will check it's fully faithful and essentially surjective. Faithfulness is clear. For fullness, let $\vec{f}: \Omega(X) \rightarrow \Omega(Y)$ be any morphism. We have 
\adjustbox{scale=.8}{\begin{tikzcd}
	\Psi(X) \arrow[r,"\Psi(f_0)"] \arrow[d,"\id"] & \Psi(Y) \arrow[d,"\id"]\\
	\Psi(X) \arrow[r,"f_1"] &  \Psi(Y) 
\end{tikzcd}} by definition of $\vec{f}$ being a morphism. Then $\vec{f} = \Omega(f_0)$. Finally, $\Omega$ is essentially surjective since for any $\vec{X}$ we have the isomorphism $\begin{bmatrix} \id \\ \eta_0 \end{bmatrix}:\vec{X} \rightarrow \Omega(X_0)$.\\
The assertions of (2) are all easy checks that follow from $\pi$, $\psi$, $\psi_0$ being natural isomorphisms and monoidality of $\pi$ and $\Psi$.\\
Finally, to prove part (3) we note that the maps $\begin{bmatrix} \id \\ \pi \end{bmatrix}: \Omega \circ \Psi \Rightarrow \ep \circ \Omega$ define the components of a natural isomorphism. Monoidality can be checked componentwise, and is inherited from $\id$ and $\pi$ e.g. the diagram
\adjustbox{scale=.8}{
	\begin{tikzcd} \Psi^2Y \tensor \Psi^2X \arrow[r, "\pi \tensor \pi"] \arrow[d, "\Psi(\psi) \tensor \psi"] & Y \tensor X \arrow[d,"\id"] \\
		\Psi^2(Y \tensor X) \arrow[r,"\pi"] & Y \tensor X 
	\end{tikzcd} }
	commutes since $\pi$ is monoidal. The diagram in the theorem commutes since $\Omega(\pi) = (\pi, \Psi(\pi)) = (\pi, \pi_{\Psi}) = \ep( (id,\pi) ) \circ (\id,\pi)_{\Psi}$, where the second equality holds by assumption of $\Psi$ being an anti-involution.
\end{proof}

We can also transport the braiding $\sigma$ on $\catA$ to the strictification.  
\begin{lem} \label{braidtransport} \cite[Example 2.4]{js93}. 
	\begin{enumerate}
		\item Let $\catA$ and $\catA$ be monoidal categories and let $(F,\mu): \catA \rightarrow \catA$ be a monoidal equivalence. If $\catA$ is braided, there is a unique braiding on $\catA$ making $F$ braided monoidal.
		\item A monoidal equivalence $F$ is braided monoidal iff its quasi-inverse $F^{-1}$ is braided monoidal.
	\end{enumerate}
\end{lem}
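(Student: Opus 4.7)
The plan is to transport the braiding along the monoidal equivalence using the chosen quasi-inverse, which by the stated convention is canonically monoidal with the unit and counit of the adjoint equivalence being monoidal isomorphisms. For part (1), let $F: \catA \to \catB$ be a monoidal equivalence with $\catA$ braided and write $G: \catB \to \catA$ for a monoidal quasi-inverse, with counit $\ep: FG \Rightarrow \id_\catB$ and unit $\eta: \id_\catA \Rightarrow GF$. I would define a candidate braiding $\tau$ on $\catB$ by the composite
\begin{align*}
\tau_{X,Y}: X \tensor Y &\xrightarrow{\ep^{-1}\tensor \ep^{-1}} FG(X) \tensor FG(Y) \xrightarrow{\mu_{G(X),G(Y)}} F\bigl(G(X) \tensor G(Y)\bigr) \\
&\xrightarrow{F(\sigma_{G(X),G(Y)})} F\bigl(G(Y) \tensor G(X)\bigr) \xrightarrow{\mu^{-1}_{G(Y),G(X)}} FG(Y) \tensor FG(X) \xrightarrow{\ep \tensor \ep} Y \tensor X.
\end{align*}
Naturality of $\tau$ is automatic from naturality of each ingredient, and the two hexagon axioms for $\tau$ reduce to the hexagons for $\sigma$ after pasting with the coherence squares for $\mu$ and the monoidality of $\ep$.

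Next I would verify that $F$ is braided monoidal with this choice of $\tau$, i.e.\ that $\tau_{F(A),F(B)} \circ \mu_{A,B} = \mu_{B,A} \circ F(\sigma_{A,B})$ for all $A,B \in \catA$. Unfolding $\tau_{F(A),F(B)}$ via the definition, one sees four composites of the form $\ep_{F(A)} \circ F(\eta_A^{-1})$ appear, each of which is $\id_{F(A)}$ by the triangle identity of the adjoint equivalence; what remains is the diagram expressing naturality of $\mu$ on $\sigma_{A,B}$, which commutes because $\mu$ is a natural transformation. For uniqueness, suppose $\tau'$ is any braiding making $F$ braided. For $X,Y\in\catB$ the naturality square for $\tau'$ applied to the isomorphisms $\ep_X$ and $\ep_Y$ forces $\tau'_{X,Y}$ to equal the formula above in which $\sigma_{G(X),G(Y)}$ is replaced by $F(\sigma_{G(X),G(Y)})$ transported back. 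Essential surjectivity of $F$ therefore pins $\tau'$ down to $\tau$.

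For part (2), the forward direction follows by observing that if $F$ is braided monoidal, then by part (1) applied to $G$ (with $F$ as its quasi-inverse) there is a unique braiding on $\catA$ making $G$ braided, and one checks this is the original braiding $\sigma$: the defining composite for this transported braiding, applied in $\catA$, collapses via the triangle identities and the braidedness of $F$ to $\sigma$ itself. The converse is symmetric, swapping the roles of $F$ and $G$.

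The only genuinely tricky point is bookkeeping: the verifications above amount to one large commutative diagram involving $\mu$, $\mu^{-1}$, $\ep$, $\eta$, $F(\sigma)$, and naturality squares. The main obstacle will be making sure the monoidality of the counit $\ep$ (which follows from $F$ and $G$ being monoidal together with the triangle identities, by the stated convention) is applied at the right place so that the diagram pastes cleanly; once this is in hand both parts reduce to routine, if lengthy, diagram chases.
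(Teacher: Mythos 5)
The paper gives no proof for this lemma, only the citation to Joyal--Street; it is stated as a known transport result. Your proof is the standard argument and is correct: transport the braiding along the equivalence using $G = F^{-1}$ and the (monoidal) unit and counit, verify the hexagons by pasting with the coherence squares for $\mu$, confirm $F$ is braided via the triangle identities and naturality of $\mu$ and $\sigma$, and pin down uniqueness by running naturality of any candidate braiding $\tau'$ against the isomorphisms $\ep_X$, $\ep_Y$.

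One small slip worth fixing: the displayed braidedness condition \[\tau_{F(A),F(B)} \circ \mu_{A,B} = \mu_{B,A}\circ F(\sigma_{A,B})\] does not typecheck, since $\mu_{A,B}$ lands in $F(A\tensor B)$ while $\tau_{F(A),F(B)}$ has source $F(A)\tensor F(B)$; it should read $\mu_{B,A}\circ\tau_{F(A),F(B)} = F(\sigma_{A,B})\circ\mu_{A,B}$. Your subsequent computation is consistent with the corrected form, so this is only a misstatement, not a flaw in the argument. Also, in the uniqueness step it is the isomorphisms $\ep_X\colon FG(X)\cong X$ themselves (rather than bare essential surjectivity) that do the work, which is exactly how you actually run the naturality square, so the phrasing could be tightened but the mathematics is sound.
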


\subsection{Strictification of the module category}

We will strictify $\catM$ by replacing it by the category $\catM_{st} := \Fun_\catA(\catA,\catM)$. This category is a strict module category over $\catA_{st}$ where the action is defined by precomposition i.e. $\act(F,S) := F \circ S$ for $F \in \catM_{st}$, $S \in \catA_{st}$.

\begin{lem} \label{moduleequiv} Let $\catM$ be a right $\catA$-module category. The functor $\bL: \catM \rightarrow \catM_{st}$ defined by $\bL(M) = (M\tensor -, (a)_{M,-,-})$ is an equivalence of categories.
\end{lem}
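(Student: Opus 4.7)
The plan is to mimic closely the proof of Proposition \ref{strictifymon} (the analogous strictification result for monoidal categories), replacing the role of the associator $\alpha$ in $\catA$ by the module associator $a$ in $\catM$. I would proceed in three steps: first check that $\bL$ is well-defined as a functor into $\catM_{st}$, then verify fully faithfulness by evaluating module transformations at the unit object, and finally establish essential surjectivity via a Yoneda-style construction.

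First I would verify $\bL(M) = (M \tensor -, a_{M,-,-})$ genuinely lands in $\catM_{st} = \Fun_\catA(\catA_\catA, \catM)$. The associator $a_{M,X,Y}: (M\tensor X) \tensor Y \to M \tensor (X \tensor Y)$ is exactly the right $\catA$-module functor coherence data for the functor $M \tensor -$, and the pentagon and unit axioms of a module category imply that $a_{M,-,-}$ satisfies the pentagon coherence required of a module functor and interacts correctly with the right unitor $r_M$. Functoriality of $\bL$ in $M$ is then immediate: a morphism $f: M \to N$ yields the module natural transformation with components $f \tensor \id_X$, whose naturality in $X$ and compatibility with $a$ follow from naturality of $a$ in its first slot.

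The faithfulness and fullness of $\bL$ are both controlled by evaluation at the unit $\1 \in \catA$. If $\bL(f) = 0$ then $f \tensor \id_\1 = 0$, and conjugating by $r_M, r_N$ recovers $f = 0$; this gives faithfulness. For fullness, given a module natural transformation $\eta: \bL(M) \Rightarrow \bL(N)$ with components $\eta_X: M \tensor X \to N \tensor X$, I would define $f := r_N \circ \eta_\1 \circ r_M^{-1}$ and check $\bL(f) = \eta$. Concretely, the module-functor compatibility square forces $\eta_X$ to factor through $\eta_\1 \tensor \id_X$ up to the module associator and unitor, so that $\eta_X = f \tensor \id_X$ after inserting the coherence data; Mac Lane's coherence for module categories (equivalently, an easy diagram chase using the unit axiom $r$) closes this out.

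For essential surjectivity, let $(F, \mu): \catA_\catA \to \catM$ be an arbitrary module functor with structure isomorphism $\mu_{X,Y}: F(X) \tensor Y \to F(X \tensor Y)$, and set $M := F(\1)$. I would construct a module natural isomorphism $\bL(M) \cong F$ whose component at $X$ is the composite
\[
M \tensor X = F(\1) \tensor X \xrightarrow{\mu_{\1,X}} F(\1 \tensor X) \xrightarrow{F(\lambda_X)} F(X),
\]
and check its compatibility with $a_{M,-,-}$ on one side and $\mu$ on the other using the pentagon axiom for $\mu$ together with the unit axiom of the module functor. The only mildly technical step is bookkeeping in this last coherence check; everything else is essentially the standard Yoneda argument that representable module functors exhaust $\Fun_\catA(\catA_\catA, \catM)$ up to equivalence, so I do not anticipate any real obstacle.
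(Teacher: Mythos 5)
The paper gives no proof at all here: it simply cites the result as ``Standard, see \cite[Remark 7.2.4]{egno15}.'' Your argument is exactly the standard one that this citation points to, and it is correct in its essentials; the only very minor quibble is that your faithfulness step is phrased as ``if $\bL(f) = 0$ then $f=0$'' rather than ``if $\bL(f) = \bL(g)$ then $f = g$'' (harmless, and equivalent once $\kk$-linearity is in play, but worth stating in the form that works in any context).
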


\begin{proof} Standard, see \cite[Remark 7.2.4]{egno15}.
\end{proof}

The equivalences $\bL$ and $L$ intertwine the actions of $\catA$ and $\catA^{str}$ on $\catM$ and $\catM^{str}$ respectively.
\begin{lem} \label{lambdab}
	Let $L: \catA \rightarrow \catA_{st}$, and $\bL: \catM \rightarrow \catM_{st}$ be as above. There exist natural ismorphisms $(\lambdab)_{M,X}: \bL(M \tensor X) \cong  \bL (M) \circ L(X)$ for $M\in \catM$ and $X\in \catA$ such that the diagrams
	\begin{center}
		\adjustbox{scale=.9}{
			\begin{tikzcd}
				\bL (M \tensor (X \tensor Y)) \arrow[d, "(\lambdab)_{M,X\tensor Y}"] \arrow[r, "\bL(a)"] & \bL ((M \tensor X) \tensor Y) \arrow[d, "(\lambdab)_{M\tensor X, Y}"] \\
				\bL(M) \circ L(X \tensor Y)  \arrow[d, "\id \hcomp \lambda^{-1}"] & \bL(M \tensor X) \circ L(Y) \arrow[d, "(\lambdab)_{M,X} \hcomp \id"] \\
				\bL(M)\circ (L(X) \circ L(Y) )  \arrow[r, "\id"] & (\bL(M)\circ L(X) ) \circ L(Y) 
			\end{tikzcd} \quad \quad
			\begin{tikzcd}
				\bL(M \tensor \1) \arrow[r,"\bL(r)"] \arrow[d, "(\lambdab)_{M,\1}"] & \bL(M) \arrow[dd, "\id"] \\
				\bL(M)\circ L(\1)  \arrow[d, "\id \hcomp \lambda_0^{-1}"] & \\
				\bL(M) \circ \id_\catA \arrow[r, "\id"] & \bL(M)
			\end{tikzcd}
		}
	\end{center}
	commute for any $M\in \catM$, $X,Y\in \catA$. 
\end{lem}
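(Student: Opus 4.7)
The plan is to define $(\lambdab)_{M,X}: \bL(M\tensor X) \Rightarrow \bL(M) \circ L(X)$ componentwise by the associator of the $\catA$-module structure on $\catM$. Evaluated at $Y \in \catA$ the two module functors give $(M\tensor X) \tensor Y$ and $M \tensor (X \tensor Y)$ respectively, so I set
\[ ((\lambdab)_{M,X})_Y := a_{M,X,Y}.\]
Invertibility and naturality in $M$, $X$, $Y$ are immediate from the corresponding properties of the associator $a$.

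Next I would verify that this formula in fact defines a morphism in $\catM_{st} = \Fun_\catA(\catA_\catA, \catM)$, i.e.\ that $(\lambdab)_{M,X}$ respects the $\catA$-module structures on $\bL(M\tensor X)$ and on $\bL(M)\circ L(X)$. The module constraint of the composite $\bL(M)\circ L(X)$, by the standard formula for the composition of module functors, is $(\id_M \tensor \alpha_{X,Y,Z})\circ a_{M, X\tensor Y, Z}$, while that of $\bL(M \tensor X)$ is $a_{M\tensor X, Y, Z}$. The required compatibility is therefore the identity
\[ a_{M,X,Y\tensor Z} \circ a_{M\tensor X, Y, Z} = (\id_M \tensor \alpha_{X,Y,Z})\circ a_{M, X\tensor Y, Z}\circ (a_{M,X,Y}\tensor \id_Z),\]
which is exactly the pentagon axiom of the $\catA$-module category $\catM$.

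Then I would check the two stated diagrams. Each commutes if and only if it commutes after evaluation at an arbitrary object $Z \in \catA$, where the horizontal composition $\hcomp$ in $\catM_{st}$ reduces to ordinary composition in $\catM$. The first diagram, evaluated at $Z$, unfolds on each side to a composite going from $(M\tensor(X\tensor Y))\tensor Z$ to $M \tensor (X \tensor (Y \tensor Z))$, and the equality of the two composites is another rewriting of the module pentagon axiom. The second diagram, evaluated at $Z$, reduces to the identity $r_M \tensor \id_Z = (\id_M \tensor \lambda_Z)\circ a_{M,\1,Z}$, which is the module triangle axiom relating the right unitor $r$ of $\catM$ with the left unitor $\lambda$ of $\catA$ (once one uses that $L_0$ is given by $\lambda$).

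The construction is entirely formal; the main obstacle is organisational rather than conceptual. One has to unpack the induced module structure on the composite $\bL(M)\circ L(X)$, the definition of the horizontal composition $\hcomp$, and the unit data of $L$ carefully enough to identify each subdiagram as a direct instance of a coherence axiom of monoidal or module categories. Once this is done, the verifications are immediate from the pentagon and triangle axioms of $\catM$.
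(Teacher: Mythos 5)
Your proof is correct and takes essentially the same approach as the paper: define $(\lambdab)_{M,X}$ componentwise via the module associator $a_{M,X,-}$ and verify the two coherence diagrams by evaluating at an arbitrary object, reducing the first to the module pentagon and the second to the module triangle. You add one verification the paper leaves implicit — that $(\lambdab)_{M,X}$ is actually a morphism in $\catM_{st}$, i.e.\ a morphism of $\catA$-module functors — which is a worthwhile check and, as you note, is again just the module pentagon.
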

\begin{proof}
	The natural isomorphism $\lambdab$ is defined as follows
	\[(\lambdab)_{M,X}: \bL(M \tensor X) = (M \tensor X) \tensor - \xrightarrow{(a)_{M,X,-}^{-1}} M \tensor (X \tensor -) = \bL(M) \circ L(X).\]
	To check if the diagrams of natural transformations commutes we need to check if the components commute. If we evaluate the functors on some $D \in \catA$ the first diagram becomes the pentagon axiom of the module category $\catM$, whereas the second diagram becomes the triangle axiom of the module category. Thus the components define commuting diagrams for any $D$, and hence the diagrams of natural transformations commute.
\end{proof}

\begin{nota}
	We denote the quasi-inverse of $(L,L_2,L_0)$ by $(R,R_2,R_0)$ and let $\eta: 1 \Rightarrow RL$ and $\ep: LR \rightarrow 1$ be the monoidal isomorphisms of the equivalence $L$. Similarly, we have $\overline{\eta}: 1 \Rightarrow \bR \bL$, $\overline{\ep}: \bL \bR \Rightarrow 1$.
\end{nota}

\begin{lem} \label{rhob}
	Let $R: \catA_{st} \rightarrow \catA$, and $\bR: \catM_{st} \rightarrow \catM$ be as above. There exist natural ismorphisms $(\rhob)_{F,S}: \bR(F \circ S) \cong  \bR (F) \tensor R(S)$ for $F\in \catM_{st}$ and $S\in \catA_{st}$ such that the diagram
	\begin{center}
		\adjustbox{scale=.9}{
			\begin{tikzcd}
				\bR (F \circ (S \circ T)) \arrow[d, "(\rhob)_{F,S\circ T}"] \arrow[r, "\bR(\id)"] & \bR ((F \circ S) \circ T) \arrow[d, "(\rhob)_{F\circ S, T}"] \\
				\bR(F) \tensor R(S \circ T)  \arrow[d, "\id \tensor \rho^{-1}"] 
				& \bR(F \circ S) \tensor R(T) \arrow[d, "(\rhob)_{F,S} \tensor \id"] \\
				\bR(F)\tensor (R(S) \tensor R(T))  \arrow[r, "\alpha"] & (\bR(F)\tensor R(S) ) \tensor R(T) 
			\end{tikzcd} 
		}
	\end{center}
	commutes for any $F\in \catM_{st}$, $S,T\in \catA_{st}$. 
\end{lem}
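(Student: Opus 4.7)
The plan is to transport the natural isomorphism $\lambdab$ of Lemma \ref{lambdab} back across the equivalences $(L,R,\eta,\ep)$ and $(\bL,\bR,\etab,\bep)$. Concretely, I define $(\rhob)_{F,S}: \bR(F\circ S) \cong \bR(F)\tensor R(S)$ as the composite
\[
\bR(F\circ S)\xrightarrow{\;\bR(\bep_F^{-1}\hcomp\ep_S^{-1})\;}\bR(\bL\bR(F)\circ LR(S))\xrightarrow{\;\bR(\lambdab_{\bR(F),R(S)}^{-1})\;}\bR\bL(\bR(F)\tensor R(S))\xrightarrow{\;\etab^{-1}\;}\bR(F)\tensor R(S).
\]
Each factor is a natural isomorphism, so $\rhob$ is a natural isomorphism. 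Morally this is forced: if $\bR$ and $R$ are to be inverses to $\bL$ and $L$ compatibly with $\lambdab$, the only sensible identification of $\bR(F\circ S)$ with $\bR(F)\tensor R(S)$ is the one above.

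To verify the pentagon diagram for $\rhob$, I would first rewrite both sides of the diagram with $F\circ(S\circ T)$ and $(F\circ S)\circ T$ in terms of the construction of $\rhob$, and then apply $\bL$ to the whole diagram. After using $\etab$ to cancel $\bL\circ\bR$ and $L\circ R$ against themselves (via the triangle identities for the adjunctions $(L,R)$ and $(\bL,\bR)$), the problem reduces to the commutativity of a pentagon purely in terms of $\lambdab$, $L_2$, $L_0$, and the associator and unitors in $\catA_{st}$ and $\catM_{st}$. That reduced pentagon is exactly the $\lambdab$-pentagon of Lemma \ref{lambdab} (read for the objects $\bR(F),\bR(F)\circ S, \bR(F)\circ S\circ T$), modulo naturality and the fact that $L$ is a monoidal equivalence so that $L_2$ intertwines $\rho^{-1}$ on $\catA_{st}$ with $\alpha^{-1}$ via $R_2$.

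The main obstacle is the bookkeeping in this reduction: one must be careful that all the instances of $\eta,\ep,\etab,\bep$ introduced when transporting the statement along the equivalences cancel via the triangle identities rather than leaving a residual nontrivial natural transformation. I would handle this by first establishing the following helper identity, which is what makes the bookkeeping painless: for any $M\in\catM$, $X\in\catA$, the two composites $\bR(\lambdab_{M,X})\circ\etab_{M\tensor X}$ and $(\etab_M\tensor\eta_X)\circ(\text{associator/unitor adjustments})$ agree. Given this helper identity and its variants, the pentagon for $\rhob$ reduces, after applying $\bL$ and collapsing adjunction data, to the pentagon for $\lambdab$, completing the proof.
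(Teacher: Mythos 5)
Your proposal is correct and takes essentially the same approach as the paper: you define $\rhob$ by transporting $\lambdab$ across the two adjunctions (the paper does this implicitly via the defining square and full-faithfulness of $\bL$; applying $\bL$ to your explicit composite and using naturality of $\bep$ together with the triangle identity $\bL(\etab^{-1}) = \bep_{\bL}$ recovers exactly that square, so the two definitions coincide), and you verify the pentagon by applying $\bL$ and reducing to the $\lambdab$-pentagon of Lemma~\ref{lambdab} before invoking faithfulness. The one place to be more careful in the bookkeeping is that the cancellation step does not follow from the triangle identities alone but also needs monoidality of the counit $\ep: LR \Rightarrow \id$ (this is what makes the $L_2$, $R_2$ factors collapse correctly); the paper uses this explicitly at the outer boundary of its diagram chase, and it is available because of the stated convention that quasi-inverses of monoidal equivalences come with monoidal unit and counit.
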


\begin{proof}
	Since $\bL$ is full and faithful we can define isomorphisms $(\rhob)_{M,X}: \bR(M\circ X) \Rightarrow \bR(M) \tensor R(X)$ via the diagram 
	\begin{tikzcd}
		\bL \bR(M \circ X) \arrow[d,"\overline{\ep}"] \arrow[r,"\bL(\rhob)"]  & \bL ( \bR(M) \tensor \bR(X) ) \arrow[d, "\lambdab"] \\
		M \circ X & \bL \bR(M) \circ LR(X) \arrow[l,"\overline{\ep} \hcomp \ep"] 
	\end{tikzcd}. Then by naturality of $\lambdab$ and $\ep$, $\overline{\ep}$ we have $\bL(\rhob)\circ LR( \alpha * \beta) = L(R(\alpha)* R(\beta)) \circ \bL(\rhob)$. Faithfulness of $\bL$ then implies naturality of $\rhob$. Consider the diagram
	\begin{center}
		\adjustbox{scale=.66}{
			\begin{tikzcd}[column sep=small]
				FST & \bL\bR(FST) \arrow[l,swap,"\bep"] \arrow[r,"\id"] \arrow[d, "\bL(\rhob)"] & \bL\bR(FST) \arrow[d," \bL(\rhob)"] \arrow[r,"\bep"] & FST & \\
				\bL\bR F \circ LR(ST) \arrow[u, "\bep \hcomp \ep"] \arrow[d, "\id \hcomp L(R_2^{-1})"] & \bL(\bR F \tensor R(ST)) \arrow[l, "\lambdab"] \arrow[d,"\bL(\id \hcomp \R_2^{-1})"] & \bL(\bR(FS) \tensor R(T) ) \arrow[r,"\lambdab"] \arrow[d, "\bL(\rhob \tensor \id)"]  & \bL \bR (FS) \circ LR(T) \arrow[dd,"\bL(\rhob)\hcomp \id"] \arrow[u, "\bep \hcomp \ep"] \arrow[r,"\bep \hcomp \id"] & FS \circ LR(T) \arrow[lu, swap,"\id\hcomp \ep"] \\
				\bL\bR F \circ L(RS \tensor RT) \arrow[dd,"\id \hcomp L_2^{-1}"] & \bL(\bR F \tensor (RS \tensor RT) ) \arrow[l,"\lambdab"] \arrow[r, "\bL a"] \arrow[d, "\lambdab"] & \bL( (\bR F \tensor RS) \tensor RT) \arrow[rd, "\lambdab"] & & \\
				& \bL \bR F \circ L(RS \tensor RT) \arrow[ld, "\id \hcomp L_2^{-1}"] & & \bL(\bR F \tensor RS) \circ LRT \arrow[rd, "\lambdab \hcomp \id"] & \\
				\bL\bR F \circ LRS \circ LRT \arrow[rrrr,"\id"]& & & &  \bL \bR F \circ LRS \circ LRT \arrow[uuu,"\bep \hcomp \ep \hcomp \id"]
			\end{tikzcd}
		}
	\end{center}
	We wish to show the upper inner hexagon commutes. The other subdiagrams commute by definition of $\rhob$, naturality of $\lambdab$ and Lemma \ref{lambdab}. The outside of the diagram commutes since $\ep: LR \Rightarrow \id$ is monoidal. Therefore, the inner hexagon commutes. Faithfulness of $\bL$ implies $\rhob$ has the required property. 
\end{proof}

Note that the category $\catM_{st}$ is also a strict module category over $\catA_{st}^{\Z_2}$ with action $F \cdot \vec{X} := F \circ X_0$, for $\vec{X} = (X_0,X_1,\eta_0,\eta_1) \in \catA_{st}^{\Z_2}$, and  $F \in \catM_{st}$. 
Lemma \ref{lambdab} holds mutatis mutandis for the composite $\Omega L$ and $\bL$. To strictify the $\Z_2$-braided pair it remains to transport the $\Z_2$-monodromy along the equivalences to $\catA_{st}$ and $\catA_{st}^{\Z_2}$.

\begin{prop} \label{ktransport}
	Let $(\catA, \catM)$ be a $\Z_2$-braided pair. There is a $\Z_2$-monodromy $K$ on $(\catA_{st},\catM_{st})$ such that the diagram
	\begin{center}
		\begin{tikzcd}[column sep = huge]
			\bL (M \tensor X) \arrow[r, "\bL (\kappa_{M,X})"] \arrow[dd,"\lambdab"] & \bL (M \tensor \Phi(X)) \arrow[d,"\lambdab"]\\
			& \bL (M) \circ L(\Phi(X)) \arrow[d, "\id \hcomp L\Phi(\eta)"] \\
			\bL(M) \circ L(X) \arrow[r, "K_{\bL(M),L(X)}"] & \bL(M) \circ \Psi(L(X)) 
		\end{tikzcd}
	\end{center}
	commutes for any $M \in \catM$ and $X \in \catA$. 
\end{prop}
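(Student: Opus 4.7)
The plan is to transport $\kappa$ along the equivalences $\bL: \catM \to \catM_{st}$ and $L: \catA \to \catA_{st}$, guided by the displayed diagram. Observe that by construction $\Psi = L \Phi R$, so the whiskering $\nu := L\Phi(\eta): L\Phi \Rightarrow L\Phi \circ RL = \Psi L$ identifies $L\Phi$ with $\Psi L$; in particular, the arrow labelled ``$\id \hcomp L\Phi(\eta)$'' in the displayed diagram is $\id_{\bL M} \hcomp \nu_X$. For arbitrary $F \in \catM_{st}$ and $S \in \catA_{st}$, I would set $M := \bR F$ and $X := R S$ and define $K_{F,S}$ as the composite
\[ F \circ S \xrightarrow{\bep^{-1} \hcomp \ep^{-1}} \bL\bR F \circ L R S \xrightarrow{\lambdab^{-1}} \bL(\bR F \tensor RS) \xrightarrow{\bL(\kappa_{M,X})} \bL(\bR F \tensor \Phi RS) \xrightarrow{\lambdab} \bL\bR F \circ L\Phi RS \xrightarrow{\bep \hcomp \id} F \circ \Psi S. \]

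Naturality of $K$ in both arguments would follow immediately from naturality of $\kappa$, $\lambdab$, $\bep$, $\ep$, and $\nu$. To recover the displayed diagram itself, I would specialise to $F = \bL M$ and $S = L X$ and apply the triangle identities for the adjunctions $L \dashv R$ and $\bL \dashv \bR$: the units $\etab_M: M \to \bR \bL M$ and $\eta_X: X \to RLX$ then cancel the corresponding counits $\bep_{\bL M}$ and $\ep_{LX}$, so that the composite collapses onto the path $\lambdab \circ \bL(\kappa_{M,X})$ followed by $\id \hcomp \nu_X$, which is exactly the displayed arrow.

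The central remaining task is to check axioms BP1 and BP2 so that $(\catA_{st}, \catM_{st}, K)$ is a $\Z_2$-braided pair. My strategy is that $\bL$ is an equivalence, hence conservative and faithful, so each axiom can be tested by pulling back along $\bL$ and $L$ and reducing to a diagram in $\catM$. The resulting large diagrams would decompose into (i) axioms BP1/BP2 for $\kappa$ in $(\catA, \catM)$, (ii) the pentagon/triangle coherences of $\lambdab$ from Lemma \ref{lambdab}, (iii) the monoidal structure maps of $(L, L_2, L_0)$, and (iv) the fact that $\nu: L\Phi \Rightarrow \Psi L$ is a braided monoidal isomorphism, which follows from Lemma \ref{braidtransport} together with the construction of the anti-monoidal structure on $\Psi$ via Lemma \ref{strinv}.

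The hardest piece will be the verification of BP2: this axiom couples the anti-monoidality constraint ($\Phi_2$ versus $\psi$) with the braiding, so the diagram chase requires carefully tracking how $\nu$ intertwines both structures simultaneously as one transports the BP2 diagram from $\catM$ to $\catM_{st}$. Once this compatibility is established, all remaining subdiagrams commute by naturality, the triangle identities, and the monoidality of $L$ and $R$, completing the proof.
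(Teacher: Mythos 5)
Your construction matches the paper's once Lemma \ref{rhob} is unwound: the paper instead characterizes $\bR(K_{F,S})$ as the unique morphism in $\catM$ fitting into a square built from $\rhob$, $\kappa$ and $\eta$ (using fullness and faithfulness of $\bR$), and that square collapses, via the triangle identities, to exactly the explicit composite you wrote down. The remainder of your argument --- recovering the displayed square by unit--counit cancellation, and verifying BP1/BP2 by transporting back to $\catM$ along the (quasi-inverse) equivalence and reducing to the axioms for $\kappa$ plus the coherences of $\lambdab$, the monoidality of $L$, and $\nu$ being braided monoidal --- is precisely the paper's strategy; the paper's Figure \ref{fig:z2diagram} is that decomposition for BP1, and BP2 is deferred there as well.
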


\begin{proof}  Let $U\in \catM_{st}$ and $S, T \in \catA_{st}$. Since $\bR$ is full and faithful the diagram
	\begin{center}
		\adjustbox{scale=.75}{
			\begin{tikzcd}[column sep = large]
				\bR (U)\tensor R (S)  \arrow[r, "\kappa_{RU, RS}"] & \bR(U) \tensor \Phi(R(S))  \\
				& \bR (U) \tensor R(\Psi(S)) \arrow[u,"\id \tensor \eta^{-1}" ] \\
				\bR(U \circ S) \arrow[uu, "\rhob"] \arrow[r,dotted,"\bR(K_{U,S})"] & \bR(U \circ \Psi(S))  \arrow[u, "\rhob"]
			\end{tikzcd}
		}
	\end{center}
	defines the components $K_{U,S}$ uniquely. Naturality of $\eta, \rhob$ and $\kappa$ implies that $\bR(K) \circ \bR(\alpha \hcomp \beta) = \bR(\alpha \hcomp \Psi(\beta)) \circ \bR(K)$. Faithfulness of $\bR$ then implies naturality of $K$. To show $K$ defines a $\Z_2$-monodromy we need to check it satisfies axioms BP1 and BP2. We consider the diagram in  Figure \ref{fig:z2diagram}.
	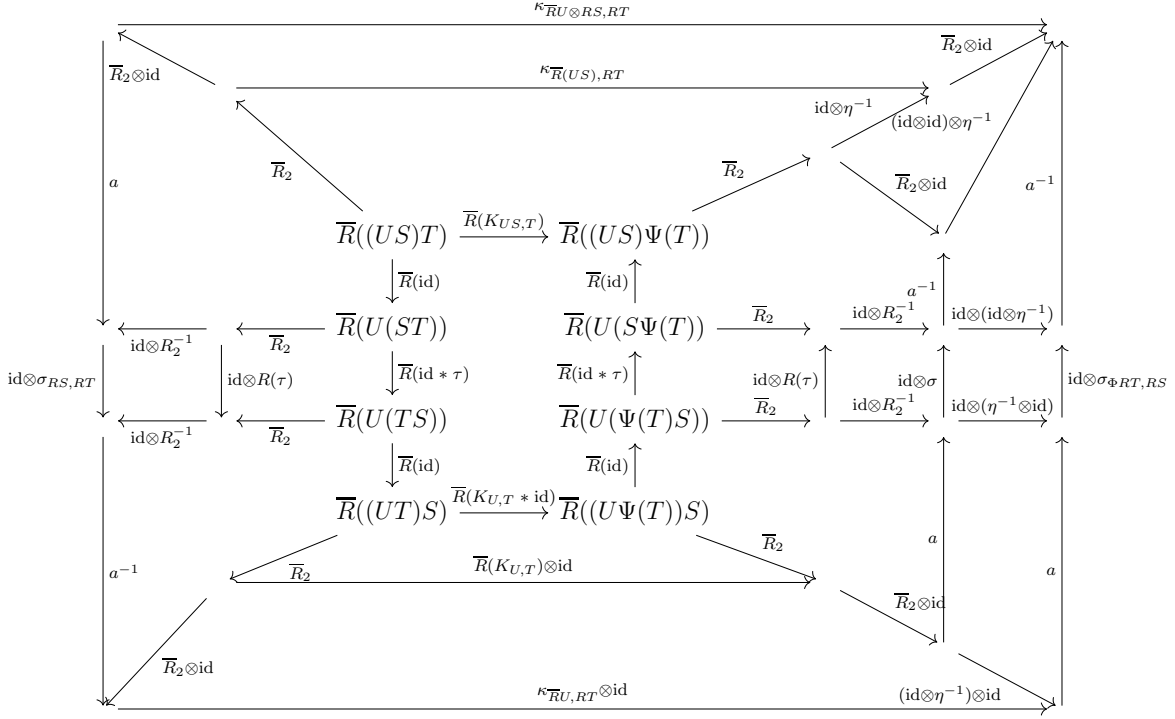
\begin{figure}
		\begin{center}
			\adjustbox{scale=.8}{
				\begin{tikzcd}[column sep=large]
					\; \arrow[dddd,"a"] \arrow[rrrrrr,"\kappa_{\bR U \tensor RS,RT}"]& & & & & & \; \\
					&\; \arrow[lu,"\rhob\tensor \id"] \arrow[rrrr,"\kappa_{\bR(US),RT}"] & & & & \; \arrow[ru,"\rhob \tensor \id"]& \\
					& & & & \; \arrow[ru,"\id\tensor \eta^{-1}"] \arrow[rd,"\rhob \tensor \id"] & & \; \\
					& & \bR((US)T) \arrow[luu,"\rhob"] \arrow[r,"\bR(K_{US,T})"] \arrow[d,"\bR(\id)"] & \bR((US)\Psi(T)) \arrow[ru,"\rhob"]  &\;  &\; \arrow[ruuu,"(\id\tensor \id)\tensor \eta^{-1}"] & \\
					\; \arrow[d,swap, "\id \tensor \sigma_{RS,RT}"] & \; \arrow[l, "\id \tensor R_2^{-1}"] \arrow[d,"\id \tensor R(\tau)"] & \bR(U(ST)) \arrow[l,"\rhob"] \arrow[d,"\bR(\id \hcomp \tau)"] & \bR(U(S\Psi(T)) \arrow[u,"\bR(\id)"] \arrow[r,"\rhob"] &\; \arrow[r, "\id \tensor R_2^{-1}"] &\; \arrow[u,"a^{-1}"] \arrow[r,"\id \tensor (\id \tensor \eta^{-1})"] & \; \arrow[uuuu,"a^{-1}"] \\
					\; \arrow[dddd, "a^{-1}"]& \; \arrow[l, "\id \tensor R_2^{-1}"] &\bR(U(TS)) \arrow[l,"\rhob"] \arrow[d,"\bR(\id)"] &  \bR(U(\Psi(T)S)) \arrow[u,"\bR(\id \hcomp \tau)"] \arrow[r,"\rhob"] &\; \arrow[u,"\id \tensor R(\tau)"] \arrow[r, "\id \tensor R_2^{-1}"] & \; \arrow[u,"\id \tensor \sigma"] \arrow[r,"\id \tensor (\eta^{-1}\tensor \id)"] & \; \arrow[u,swap,"\id \tensor \sigma_{\Phi RT,RS}"] \\
					& &\bR((UT)S) \arrow[r,"\bR(K_{U,T}\hcomp \id)"] \arrow[ld,"\rhob"] & \bR((U\Psi(T))S) \arrow[u,"\bR(\id)"] \arrow[rd,"\rhob"] & & & \\
					&\; \arrow[ldd,"\rhob \tensor \id"] \arrow[rrr,"\bR(K_{U,T})\tensor \id"] & & & \; \arrow[rd,"\rhob \tensor \id"]& & \\
					& & & & & \; \arrow[rd,swap, "(\id \tensor \eta^{-1})\tensor \id"] \arrow[uuu,"a"] & \\
					\; \arrow[rrrrrr,"\kappa_{\bR U,RT}\tensor \id"]& & & & & &\; \arrow[uuuu,"a"] \\
				\end{tikzcd}
			}
		\end{center}
		\caption{A diagram chase to prove $K$ satisfies axiom BP1.}
		\label{fig:z2diagram}
	\end{figure}
	We wish to show the inner octagon commutes. The outer diagram commutes since $\kappa$ is a $\Z_2$-monodromy. The other subdiagrams commute by naturality, Lemma \ref{rhob}, definition of $K$, and $R$ being braided monoidal. We conclude the inner octagon commutes. By faithfulness of $\bR$ we find that $K$ satisfies axiom BP1. 
	Similarly, one can write down a diagram to show $K$ also satisfies BP2. We leave this to the reader, or see \cite{thesis}.
	It remains to show $K$ satisfies the diagram in the Proposition. Let $M\in \catM$, $X \in \catA$ and consider the diagram
	\begin{center}
		\adjustbox{scale=.65}{
			\begin{tikzcd}[column sep = huge]
				\bL M \circ LX \arrow[rd,"\bL\etab \hcomp L\eta"] \arrow[dddd,"\id"] && \bL(M\tensor X) \arrow[ll,"\lambdab"] \arrow[r,"\bL (\kappa_{M,X})"] \arrow[d,"\bL (\etab \tensor \eta)"]& \bL (M \tensor \Phi(X)) \arrow[rr,"\lambdab"] \arrow[d,"\bL (\etab \tensor \Phi \eta)"] & & \bL M \tensor L\Phi (X) \arrow[ld,"\bL \etab \hcomp L\Phi \eta"]  \arrow[dddd,"\id \hcomp L\Phi (\eta)"]\\
				&\; \arrow[lddd,"\bep\hcomp \ep"]&\bL (\bR\bL M \tensor RLX) \arrow[l,"\lambdab"] \arrow[r,"\bL (\kappa_{\bR\bL M,RLX})"] & \bL (\bR\bL M \tensor \Phi RLX) \arrow[r,"\lambdab"] &\;&\\
				&&&\bL (\bR\bL M \tensor R\Psi LX) \arrow[u,"\bL (\id \tensor \eta^{-1})"] \arrow[r,"\lambdab"] & \; \arrow[u,"\id \hcomp L \eta^{-1}"] \arrow[rdd,"\bep \hcomp \ep"]&\\
				&&\bL\bR (\bL M \circ LX) \arrow[r,"\bL\bR (K_{LM,LX})"] \arrow[lld,"\bep"] \arrow[uu,"\bL (\rhob)"] & \bL\bR (\bL M \circ \Psi LX) \arrow[u,"\bL (\rhob)"] \arrow[rrd,"\bep"]&&\\
				\bL M \circ LX \arrow[rrrrr,"K_{\bL M,LX}"] &&&&& \bL M \circ \Psi LX
			\end{tikzcd}
		}
	\end{center}
	The subdiagrams commute by naturality, the triangle identity, the definition of $\rhob$, and the definition of $K$. Therefore the outer diagram commutes, which is what we wished to show.
\end{proof}

Finally, we transport the $\Z_2$-cylinder braiding from $\catA_{st}$ to $\catA_{st}^{\Z_2}$. Namely, let $\vec{S} = (S_0,S_1,\eta_0^S, \eta_1^S)\in \catA_{st}^{\Z_2}$ and $F \in \catM_{st}$ we define $\vec{K}_{F,\vec{S}}$ as the composite 
\[F \cdot \vec{S} = F S_0  \xrightarrow{K_{F,S_0}} F \Psi(S_0) \xrightarrow{\id \hcomp \eta_0^S} F S_1 = F \cdot \Psi_{st}(\vec{S}).\] 
One easily verifies that $\vec{K}$ is a $\Z_2$-cylinder braiding. Thus $(\catA_{st}^{\Z_2},\catM_{st})$ is a strict $\Z_2$-braided pair.

\subsection{Coherence as a corollory to strictification}

Our strictification results now yield the coherence theorem as an easy corollary.

\begin{thm} \label{braidcoh}
	Let $(\catA,\catM)$ be a $\Z_2$-braided pair and consider two parallel structural isomorphisms $T_1 \xrightarrow{f,g} T_2$ in $\catA$ (or $\catM$). Then $f=g$ if the underlying (cylinder) braids are equal $\beta_f = \beta_g$. 
\end{thm}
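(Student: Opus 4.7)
The plan is to reduce Theorem \ref{braidcoh} to the strict case (Proposition \ref{strz2coh}) via the strictification constructions of the previous subsections. Concretely, given a $\Z_2$-braided pair $(\catA,\catM)$, the composite equivalence $(\Omega L,\bL):(\catA,\catM)\rightarrow (\catA_{st}^{\Z_2},\catM_{st})$ is a fully faithful morphism of $\Z_2$-braided pairs, where the target is strict by the construction of $\Psi_{st}$, $\Omega$, and the $\Z_2$-cylinder braiding $\vec{K}$. The idea is to transport the pair of parallel structural isomorphisms $f,g:T_1\Rightarrow T_2$ across this equivalence and invoke strict coherence on the result.

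First I would make precise the statement that $(\Omega L,\bL)$ is a morphism of $\Z_2$-braided pairs. The relevant compatibilities are already assembled in the previous subsections: Proposition \ref{strictifymon} gives the monoidal equivalence $L$, Lemma \ref{braidtransport} equips the strict target with a braiding making $L$ braided, Proposition \ref{strmoninv} promotes the anti-involution coherently along $\Omega$, Lemma \ref{lambdab} provides the module-equivalence coherence $\lambdab$, and Proposition \ref{ktransport} transports $\kappa$ to a $\Z_2$-cylinder braiding compatible with $\bL$. Packaging these natural isomorphisms together yields, for any structural isomorphism $h$ in $\catA$ (resp.\ in $\catM$), a canonically induced structural isomorphism $h_{st}$ in $\catA_{st}^{\Z_2}$ (resp.\ in $\catM_{st}$) together with a commuting coherence square relating $\Omega L(h)$ (resp.\ $\bL(h)$) to $h_{st}$, built from the comparison data $\Omega_2,\Omega_0,L_2,L_0,\lambdab$.

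Next I would verify that this transport preserves underlying braids, i.e.\ $\beta_{h_{st}}=\beta_h$. This is the only step that requires actual care: one checks that each generator in the presentation of $h$ is sent to the corresponding generator in the presentation of $h_{st}$, while the structural comparison isomorphisms $\Omega_2, L_2, \lambdab, \rhob$, the units, and the witnesses $t$ contribute only identity braids (they involve no instance of $\sigma$ or $\kappa$). A clean way to organise this is by induction on the presentation of $h$ as a horizontal/vertical composite of the generators listed in the definition of a structural isomorphism, using naturality at each step to push the comparison isomorphisms past instances of $\sigma$ and $\kappa$ without altering their position in the braid word. In particular, the observation of Remark \ref{braidobservation} applies equally well on both sides, so the braid word determined by $h_{st}$ agrees letter for letter with that of $h$.

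With these ingredients, the proof concludes quickly: from $\beta_f=\beta_g$ we obtain $\beta_{f_{st}}=\beta_{g_{st}}$, Proposition \ref{strz2coh} yields $f_{st}=g_{st}$ in the strict pair, and then the coherence square together with the faithfulness of $\Omega L$ and $\bL$ forces $f=g$. The main obstacle I anticipate is bookkeeping in the inductive step: one must confirm that each local move used to rewrite $h$ into a presentation with only one string braided at a time is mirrored by the corresponding rewriting of $h_{st}$, so that the equality $\beta_{h_{st}}=\beta_h$ is independent of the chosen presentation. Once that is spelled out, the reduction to the strict setting is entirely formal.
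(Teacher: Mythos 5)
Your proposal is correct and mirrors the paper's own argument: both build a prism whose vertical faces are the coherence squares supplied by Proposition \ref{strictifymon}, Lemma \ref{strinv}, Proposition \ref{strmoninv}, Lemmas \ref{lambdab}--\ref{rhob}, and Proposition \ref{ktransport}, observe that the comparison data contribute no braid letters so $\beta_{f_{st}}=\beta_{g_{st}}$, invoke Proposition \ref{strz2coh} to close the bottom face, and then conclude $f=g$ by faithfulness of $\bL$ (and $\Omega L$). The only difference is cosmetic: you spell out the braid-preservation check by induction on the presentation, whereas the paper asserts it as clear from the rectangles.
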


\begin{proof}
	We will provide the prove when $f$ and $g$ are structural isomorphisms with a given presentation in $\catM$. The case of structural isomorphisms in $\catA$ is analogous and left to the reader. 
	We consider the diagram with sides $\bL(f)$ and $\bL(g)$ in $\catM_{st}$. 
	The presentation of $f$ and $g$ consists of a vertical composition of horizontal compositions of the $a,\kappa,r$, etcetera. 
	Below each arrow of the presentation we build a rectangle using the diagrams in Proposition \ref{strictifymon}, Lemma \ref{strinv}.2, Proposition \ref{strmoninv}.3, Lemma \ref{lambdab}, Lemma \ref{rhob} and Proposition \ref{ktransport} so that we obtain a prism of natural isomorphisms. 
	The lower face of the prism consists of two structural isomorphisms $f_{st}$ and $g_{st}$ in $\catA_{st}^{\Z_2}$, whereas the upper face of the prism is our original diagram containing $f$ and $g$. 
	It is clear from these rectangles that if $\beta_f= \beta_g$, then also $\beta_{f_{st}}=\beta_{g_{st}}$. 
	By assumption $\beta_f=\beta_g$ so that Proposition \ref{strz2coh} implies that $f_{st} = g_{st}$. 
	Then $\bL(f)= \bL(g)$ follows since the prism has vertical faces consisting of commutative diagrams of isomorphisms, and we just proved that the lower face commutes. By faithfulness of $\bL$ we conclude $f=g$.
\end{proof}

We also have the following two coherence results:
\begin{thm} \label{moncoh}
	Let $(\catA,\catM)$ be a strict $\Z_2$-monoidal pair, any two parallel structural isomorphisms are equal
\end{thm}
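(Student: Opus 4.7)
The plan is to exploit the fact that in a strict $\Z_2$-monoidal pair, every generator of a structural isomorphism is literally an identity natural transformation, and then to argue that the closure of identities under vertical and horizontal composition forces every structural isomorphism to itself be an identity. Note that for a $\Z_2$-monoidal pair the braiding $\sigma$ and the $\Z_2$-cylinder braiding $\kappa$ are not among the generators, so the list of building blocks for a structural isomorphism in $\catA$ is $\id_{\id_\catA},\id_\tensor,\id_\Phi,\alpha,\lambda,\rho,\Phi_2,\Phi_0,t$ (and their inverses), and in $\catM$ it is $\id_{\id_\catM},\id_\act,a,r$.

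First I would record that by the definition of strictness each of $\alpha,\lambda,\rho,a,r,\Phi_2,\Phi_0,t$ is the identity natural transformation on the appropriate composite functor, so the full list of generators consists of identity natural transformations. Next I would verify the two easy but crucial closure properties: vertical composition of compatible identities is an identity ($\id_G\circ\id_F = \id_F$ when $F=G$), and horizontal composition of identities is an identity ($\id_F * \id_G = \id_{F\circ G}$). An induction on the length of the expression presenting a structural isomorphism then shows that every structural isomorphism is of the form $\id_T$ for some functor $T$ built from $\tensor$, $\Phi$, $\act$, $\1$, and the input variables. Consequently, two parallel structural isomorphisms $f,g\colon T_1\Rightarrow T_2$ must both equal $\id_{T_1}=\id_{T_2}$, giving $f=g$.

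The only point that deserves care, rather than a genuine obstacle, is to confirm that the strictness hypotheses entail the relevant equalities between functors and not merely between their values on objects. For instance, one needs $\Phi(X\tensor Y) = \Phi(Y)\tensor \Phi(X)$ to hold as an equality of functors in $(X,Y)$ (so that setting $\Phi_2=\id$ is meaningful), and similarly for the strict module structure. These hold because a strict monoidal functor is strict on compositions of the structure functors, and strictness of $t$ forces $\Phi^2=\id_\catA$ as a functor and not just up to isomorphism. Once these bookkeeping checks are made, the inductive closure argument goes through without incident, and the theorem follows.
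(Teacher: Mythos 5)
Your argument for the strict case is correct and matches the paper's opening observation: in a strict $\Z_2$-monoidal pair every generating natural isomorphism ($\alpha,\lambda,\rho,\Phi_2,\Phi_0,t,a,r$) is an identity, and vertical and horizontal composites of identities are identities, so every structural isomorphism is an identity. Your closure-under-composition induction and your remark that strictness must be read as an equality of \emph{functors} (so that, e.g., $\Phi_2=\id$ is meaningful) are both sound.

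However, the paper's proof continues past that observation: ``Now we deduce coherence from the strictification results, following the technique of the proof of Theorem \ref{braidcoh}.'' This is because the theorem is later applied to $\Z_2$-tensor pairs that are \emph{not} assumed strict (see Proposition \ref{prop:1rn} and Theorem \ref{thm:1rs}), and the informal version in Theorem \ref{intro:coherence}(1) drops the word ``strict''; the occurrence of ``strict'' in the formal statement is almost certainly a slip. The substantive half of the proof is therefore the reduction of a general $\Z_2$-monoidal pair to a strict one, which your proposal omits. Concretely: given two parallel structural isomorphisms $f,g$ in $(\catA,\catM)$, one stacks the commuting rectangles supplied by Propositions \ref{strictifymon}, \ref{strmoninv} and Lemmas \ref{strinv}, \ref{lambdab}, \ref{rhob} beneath the presentations of $f$ and $g$ to build a prism whose bottom face consists of the transported structural isomorphisms $f_{st},g_{st}$ in the strict pair $(\catA_{st}^{\Z_2},\catM_{st})$. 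Your strict-case argument gives $f_{st}=g_{st}$; commutativity of the vertical faces of the prism then yields $\bL(f)=\bL(g)$, and faithfulness of $\bL$ gives $f=g$. Without that strictification step the theorem, in the form actually used downstream, is not established.
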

\begin{proof} 
	This is clear in a strict $\Z_2$-monoidal pair since all horizontal composition of identities are identities. Now we deduce coherence from the strictification results, following the technique of the proof of Theorem \ref{braidcoh}
\end{proof}

\begin{thm} \label{symcoh}
	Let $(\catA,\catM)$ be a strict $\Z_2$-symmetric pair, any two parallel structural isomorphisms are equal
\end{thm}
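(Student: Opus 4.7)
The plan is to reduce Theorem \ref{symcoh} to Theorem \ref{braidcoh} by observing that the extra relations $\sigma^2 = \id$ and $\kappa^2 = \id$ force the underlying (cylinder) braids to descend to a finite Coxeter quotient in which parallel structural isomorphisms automatically agree. This is completely analogous to how the classical coherence theorem for symmetric monoidal categories follows from the Joyal–Street coherence theorem for braided monoidal categories.

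First, I would observe that in a strict $\Z_2$-symmetric pair any structural isomorphism still has an underlying element in $B_n$ (for $\catA$) or in $B_n^{\mathrm{cyl}}$ (for $\catM$) as in the definition preceding Proposition \ref{strz2coh}. However, by Lemma \ref{relem} and the extra relations $\sigma^2 = \id$, $\kappa^2 = \id$, the evaluation of any such braid in the endomorphism groups of $\catA$ or $\catM$ factors through the quotient by the normal closure of $\sigma_i^2$ and $\kappa^2$. Inspecting the presentation in Proposition \ref{prop:bcyl}, this quotient is precisely the Coxeter group of type $B_n$, i.e.\ the hyperoctahedral group $W(B_n) = (\Z_2)^n \rtimes S_n$ (and the corresponding quotient of $B_n$ is the symmetric group $S_n$). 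So it suffices to show that for any two parallel structural isomorphisms $f, g \colon T_1 \Rightarrow T_2$ the images $\overline{\beta}_f$ and $\overline{\beta}_g$ in $W(B_n)$ (respectively $S_n$) coincide; Theorem \ref{braidcoh} will then finish the argument.

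Next I would argue that the image in $W(B_n)$ is determined by $T_1$ and $T_2$ alone. A structural isomorphism in $\catM$ goes between functors of the form $M \tensor X_{i_1}^{\epsilon_1} \tensor \dots \tensor X_{i_n}^{\epsilon_n}$ where each $X^{\epsilon}$ means either $X$ or $\Phi(X)$ (in the strict setting $\Phi^2 = \id$ so only two options occur). Comparing source and target produces a signed permutation: a permutation in $S_n$ telling how the inputs are reordered together with a sign in $\{0,1\}^n$ recording in which entries $\Phi$ was applied. This signed permutation is precisely a point of $W(B_n)$, and by direct inspection of the generators the image of $\beta_f$ in $W(B_n)$ is exactly this signed permutation. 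Hence $\overline{\beta}_f = \overline{\beta}_g$ whenever $f, g$ are parallel. The analogous and simpler statement holds in $\catA$ with $W(B_n)$ replaced by $S_n$.

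Finally, to extend this from strict to general $\Z_2$-symmetric pairs one replays the prism argument used at the end of the proof of Theorem \ref{braidcoh}: the strictification $(\catA_{st}^{\Z_2}, \catM_{st})$ is again a $\Z_2$-symmetric pair because the transported braiding and $\Z_2$-cylinder braiding $\vec{K}$ still satisfy $\sigma^2 = \id$ and $\kappa^2 = \id$ (the strictifying natural isomorphisms are built from $\lambdab$, $\rhob$ and their mates, none of which contribute to the underlying braid). Thus any coherence diagram in $(\catA,\catM)$ is transported to a coherence diagram in the strict pair, where the previous paragraph applies. The one step that requires a small check, and is probably the main technical nuisance rather than a serious obstacle, is verifying that the strictification procedure of Section \ref{sec:coherence} really preserves the identities $\sigma^2 = \id$ and $\kappa^2 = \id$; this follows from the explicit formulas for $K$ and $\vec{K}$ in Proposition \ref{ktransport} together with the fact that $L, \bL$ send the identity natural transformation to the identity.
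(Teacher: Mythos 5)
Your proposal is correct and takes essentially the same route as the paper, which merely records that the strictification $(\catA_{st}^{\Z_2},\catM_{st})$ is again a strict $\Z_2$-symmetric pair and leaves the rest as ``an easy adaptation'' of Proposition~\ref{strz2coh} and Theorem~\ref{braidcoh}. You fill in that adaptation explicitly: passing from the cylinder braid group $\bcyl_n$ to the type~$B$ Coxeter quotient $W(B_n)=(\Z_2)^n\rtimes S_n$ (and from $B_n$ to $S_n$), observing that the image there is a signed permutation read off from the source and target alone, and then running the same strictification--prism argument as in Theorem~\ref{braidcoh}. One small wording issue: once you know $\overline{\beta}_f=\overline{\beta}_g$ in $W(B_n)$, what finishes the strict case is not Theorem~\ref{braidcoh} verbatim (which needs $\beta_f=\beta_g$ in $\bcyl_n$) but the adapted form of Proposition~\ref{strz2coh} where the extra relations $\sigma^2=\id$, $\kappa^2=\id$ are also allowed in the rewriting of presentations --- which is exactly what you describe earlier, so the substance is right even if the cross-reference is slightly misplaced.
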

\begin{proof}
	Note that the strictification $(\catA_{st}^{\Z_2},\catM_{st})$ is a strict $\Z_2$-symmetric pair. We leave the proof, an easy adaptation of the proofs of Proposition \ref{strz2coh} and Theorem \ref{braidcoh}, to the reader.
\end{proof}

\section{Classifying categorical algebras} \label{sec:rexalg}
\begin{nota} 
	Let $\kk$ be a field. We denote with $\vect$ the category of $\kk$-vector spaces. 
\end{nota}
We now analyse categorical algebras in $\Rex$, a setting of $\kk$-linear categories well suited to higher algebra. By a \emph{$\kk$-linear category} we mean a category enriched over $\vect$. 
We introduce $\Rex$ following the exposition given in \cite{bzbj}.
\begin{nota} 
	We denote with $\fdvect$ the $\kk$-linear category of finite dimensional $\kk$-vector spaces.
\end{nota}
\begin{nota}
	For a (higher) category $\catC$ and $X,Y\in \catC$ we sometimes write $\catC(X,Y)$ for $\mathrm{Hom}_{\catC}(X,Y)$.
\end{nota}
Recall that a functor is called \emph{right exact} if it preserves finite colimits. A category is \emph{essentially small} if it is equivalent to a small category.\footnote{A category is called \emph{small} if it has a set of objects and a set of morphisms rather than proper classes.}
\begin{defn}
	$\Rex$ is the $2$-category of $\kk$-linear essentially small categories that admit finite colimits with morphisms right exact functors and 2-morphisms natural isomorphisms.
\end{defn}

\begin{ex}
	For any $\kk$-algebra $A$ the category $A\fdmod$ is in $\Rex$.	
\end{ex}
For $\catC, \catD, \mathcal{E}$ $k$-linear we let $\text{Bilin}(\catC \times \catD, \mathcal{E})$ denote the category of $\kk$-bilinear functors from $\catC \times \catD$ to $\mathcal{E}$ that preserve finite colimits in each variable separately.
\begin{defn} 
	The \emph{Deligne-Kelly tensor product} $\catC \boxt \catD$ for $\catC,\catD \in \Rex$ is uniquely characterised by the natural equivalence
	\[ \Rex( \catC \boxt \catD, \mathcal{E}) \simeq \text{Bilin}(\catC \times \catD, \mathcal{E}).\]
\end{defn} 

\begin{ex} 
	If $A$ and $B$ are $\kk$-algebras then $A\fdmod \boxt B \fdmod \simeq (A \tensor_{\kk} B)\fdmod$. 
\end{ex}

\begin{conv}
	From now on any functor between categories in $\Rex$ will be assumed to be $\kk$-linear and right exact without further comment. 
\end{conv}
Two operations in $\ztd_n$ are called \emph{isotopic} if there is a path in the space of operations connecting them; we will refer to this path as an \emph{isotopy}.  
If $\beta: g\Rightarrow g'$ is an isotopy of operations and $(\gamma_i: f_i \Rightarrow h_i)_{i=1}^k$ a collection of isotopies then we call the associated isotopy 
\[ \beta * (\gamma_1\amalg \dots \amalg \gamma_k): g \circ (f_1 \amalg \dots \amalg f_k) \Rightarrow g' \circ (h_1 \amalg \dots \amalg h_k) \] 
the \emph{horizontal composition}.  A \emph{2-isotopy} between two isotopies is a homotopy between the two paths in the space of operations. 
\begin{nota}
	For a permutation $\sigma \in S_k$ we abuse notation by also writing $\sigma$ for the induced permutation maps $\sigma: \DD^{\amalg k} \rightarrow \DD^{\amalg k}$ and $\sigma: \catA^{\boxt k} \rightarrow \catA^{\boxt k}$ where $\catA\in \Rex$.
	More generally, for a collection of operations $(f_i)_{i=1}^{k}$ (resp. functors $(F_i)_{i=1}^{k}$) in a composite $\sigma \circ (\amalg_i f_i)$ and $\sigma \circ (\boxt_i F_i)$ the map (resp. functor) $\sigma$ permutes the codomains of the $f_i$ (resp. $F_i$).
\end{nota}
\begin{defprop} \label{rexalgdef}
	An $\ztd_n$-algebra $F$ in $\Rex$ is a choice of categories $\catA, \catM \in \Rex$ together with assignments 
	\begin{enumerate}
		\item For every operation $f$ assigned functors $F(f)$ as follows
		\begin{align*}
		&f \in \ztd_n( \DD^{\amalg k}, \DD) \mapsto F(f): \catA^{\boxt k} \rightarrow \catA,\\
		&f \in \ztd_n( \DD_* \amalg \DD^{\amalg k}, \DD_*) \mapsto F(f): \catM \boxt \catA^{\boxt k} \rightarrow \catM, \\
		&f \in \ztd_n( \DD^{\amalg k}, \DD_*) \mapsto F(f): \catA^{\boxt k} \rightarrow \catM.
		\end{align*}
		where $\catA^{\boxt 0 }:= \fdvect$.
		\item For every isotopy between compositions of operations $(g,(f_i)_{i=1}^k,h)$ and permutation $\sigma \in S_k$ an assigned natural isomorphism as follows
		\[
		\begin{tikzcd}[row sep=small,column sep=large]
		& \bullet \arrow[dd, "g"]\\
		\bullet \arrow[ur,"\sigma \circ (f_1\amalg \dots \amalg f_k)"] \arrow[dr, "h"{name=U, below}]{} & \\
		& \bullet  \arrow[Rightarrow, "\gamma" near start, from=U,uu, start anchor={[xshift=1ex,yshift=1ex]},
		end anchor={[xshift=-1ex,yshift=-1ex]}]
		\end{tikzcd} 
		\quad \quad \mapsto \quad \quad 
		\begin{tikzcd}[row sep=small,column sep=large]
		& \bullet \arrow[dd, "F(g)"]\\
		\bullet \arrow[ur,"\sigma \circ (F(f_1) \boxt \dots \boxt F(f_k))"] \arrow[dr, "F(h)"{name=U, below}]{} & \\
		& \bullet \arrow[Rightarrow, "F(\gamma)" near start, from=U,uu, start anchor={[xshift=1ex,yshift=1ex]},
		end anchor={[xshift=-1ex,yshift=-1ex]}]
		\end{tikzcd}
		\]
	\end{enumerate}
	Subject to the conditions that: 
	\begin{enumerate}[label=(\roman*)]
		\item identity operations get assigned identity functors, 
		\item for every operation $f$ the constant isotopies \begin{tikzcd} & \bullet \\ \bullet \arrow[ru,"f"] \arrow[r,"f"] & \bullet \arrow[u,"\id"] \end{tikzcd} and \begin{tikzcd} & \bullet \\ \bullet \arrow[ru,"f"] \arrow[r,"\id"] & \bullet \arrow[u,"f"] \end{tikzcd} get assigned the identity natural isomorphism $\id_{F(f)}$, 
		\item \label{con3} and for every diagram of operations and isotopies
		\begin{center}
			\begin{tikzcd}[column sep = 70pt,row sep =large]
				k \arrow[r, "\alpha"] \arrow[d, "\gamma"] & m \circ (\sigma_h \circ \amalg_n h_n) \arrow[d, "\id_m * (id_{\sigma_h} *\amalg \beta_n)"]\\
				l \circ (\sigma_f \circ \amalg_j f_j) \arrow[r, "\delta * (\id_{\sigma_f}* \amalg \id_{f_j})"] & m \circ (\sigma_g \circ \amalg_n g_n)\circ (\sigma_f \circ \amalg_j f_j)
			\end{tikzcd}
		\end{center}
		such that there exists a 2-isotopy between the composite isotopies filling the square then the following equation holds between the assigned natural isomorphisms
		\begin{align} \Big( \id_{F(m)} * (\id_{\sigma_h} * \boxt_n F(\beta_n) \Big) \;  \circ \;  F(\alpha) =  \Big( F(\delta) * (\id_{\sigma_f} * \boxt_j \id_{F(f_j)}) \Big) \; \circ \;  F(\gamma). \label{2isoeq} \end{align}
	\end{enumerate}
\end{defprop} 
\begin{rmk}
	We will view $\ztd_n$ as an $\infty$-operad to define $\ztd_n$-algebras in general. Definition-Proposition \ref{rexalgdef} unpacks that definition for $\Rex$.
	The proof is delayed until \S7.
\end{rmk}
Various special cases of condition \ref{con3} determine the `pseudofunctoriality' of $\ztd_n$-algebras:
\begin{lem} \label{pseudofun} Let $F$ be a $\ztd_n$-algebra in $\Rex$.
	\begin{enumerate}
		\item Let $\alpha: f \simeq g$ be an isotopy between operations, then $F(\begin{tikzcd} & \bullet \\ \bullet \arrow[ru,"f"] \arrow[r,"g"] & \bullet \arrow[u,"\id"] \end{tikzcd})$ and $F(\begin{tikzcd} & \bullet \\ \bullet \arrow[ru,"f"] \arrow[r,"\id"] & \bullet \arrow[u,"g"] \end{tikzcd})$ are assigned the same natural isomorphism denoted $F(\alpha)$. 
		\item Let $\alpha^{-1}$ denote the inverse isotopy of $\alpha: f\simeq g$, then $F(\alpha^{-1}) = F(\alpha)^{-1}$.
		\item If $\alpha$ and $\beta$ are isotopies that are 2-isotopic, then $F(\alpha) = F(\beta)$.
		\item For any choice of composite $\beta \circ \alpha$ of two isotopies $\beta$, $\alpha$ we have $F(\beta \circ \alpha) = F(\beta) \circ F(\alpha)$.
		\item The constant isotopy 
		$\begin{tikzcd} 
		& \bullet \\ 
		\bullet \arrow[ru,"m \circ(\sigma_g \circ \amalg_n g_n)"] \arrow[r,swap,"\sigma_g \circ \amalg_n g_n"] & \bullet \arrow[u,"m"] 
		\end{tikzcd}$
		is assigned a natural isomorphism 
		\[F\big(m \circ (\sigma_g \circ \amalg_n g_n) \big) \cong F(m) \circ \big(\sigma_g \circ \boxt_n F(g_n) \big).\] 
		\item The two composites of natural isomorphisms between $F\big(m \circ (\sigma_g \circ \amalg_n g_n) \circ (\sigma_f \circ \amalg _i f_j))$ and $F(m) \circ \big(\sigma_g \circ \boxt_n F(g_n) \big) \circ \big(\sigma_f \circ \boxt_j F(f_j))$ are equal. 
		\item The diagram
		\[
		\begin{tikzcd} F\big(m_1 \circ (\sigma \circ \amalg_n g_n) \big) \arrow[r,"\cong"] \arrow[d,"F\big(\chi * (\id_{\sigma}* \amalg_n \beta_n) \big)"] & F(m_1) \circ \big(\sigma \circ \boxt_n F(g_n) \big) \arrow[d,"F(\chi) * \big(\id_{\sigma} * \boxt_n F(\beta_n)\big)"] \\
		F\big(m_2 \circ (\sigma \circ \amalg_n h_n) \big) \arrow[r,"\cong"] & F(m_2) \circ \big(\sigma \circ \boxt_n F (h_n) \big)
		\end{tikzcd}
		\]
		commutes
	\end{enumerate}
\end{lem}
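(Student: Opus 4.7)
The plan is to derive each of the seven assertions as a formal consequence of the axioms (i)--(iii) of Definition-Proposition \ref{rexalgdef}, by exhibiting for each claim a square of composable operations and isotopies whose two fillings are 2-isotopic, and then reading off the resulting identity of natural isomorphisms via \eqref{2isoeq}. Apart from parts (5)--(7), every assertion is essentially a direct manipulation of condition (iii) in the presence of the constant-isotopy normalisation supplied by condition (ii).

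Concretely, for part (1) I would realise both triangles from the statement as the two fillings of a single square whose top and bottom edges are the isotopy $\alpha$; condition (ii) sends the constant components on either side to identity natural isomorphisms, so \eqref{2isoeq} forces the two candidate natural isomorphisms to coincide. Part (2) follows by applying condition (iii) to the square whose boundary composite is the concatenation $\alpha \circ \alpha^{-1}$ and whose other filling is the constant isotopy at $f$; these are 2-isotopic by reparametrisation, and combined with (ii) this yields $F(\alpha) \circ F(\alpha^{-1}) = \id$, the symmetric argument giving the other composite. Part (3) is essentially a direct restatement of (iii) applied to the square whose horizontal edges are the two 2-isotopic isotopies under comparison, with constant vertical edges. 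For part (4) I would use the square comparing the single isotopy $\beta \circ \alpha$ on one side with its two-step factorization on the other, filled in by constant isotopies supplied by (ii).

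Parts (5)--(7) express higher coherence and follow the same strategy. The natural isomorphism in (5) is defined by evaluating $F$ on the constant isotopy at the composite $m \circ (\sigma_g \circ \amalg_n g_n)$, viewed as a triangular filler in the sense of Definition-Proposition \ref{rexalgdef}(2); that the result depends neither on the chosen representative nor on the ordering of the $g_n$ is exactly a consequence of condition (iii). Part (6) is obtained by applying (iii) to two parallel fillings of the constant isotopy at a triply-composite operation, arranged so that one filling uses the isomorphism of (5) on the outer pair first and the other uses it on the inner pair first. Part (7) is naturality of the isomorphism of (5) in the isotopies $\beta_n$ and $\chi$, obtained by applying (iii) to the square whose horizontal sides are the nontrivial isotopies $\beta_n * \chi$ in the two presentations and whose vertical sides are the structural isomorphisms produced by (5).

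The main obstacle is purely organisational: for each claim one must write down the correct square of operations and isotopies and verify that the two boundary fillings are genuinely 2-isotopic. The latter is never difficult in our context, because in every case both fillings will be concatenations of the same elementary isotopies arranged in different orders, so a reparametrisation 2-isotopy always exists inside the relevant operadic space of operations. Once the squares have been drawn, each assertion is a direct invocation of Equation \eqref{2isoeq}.
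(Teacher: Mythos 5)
Your proposal correctly identifies the mechanism the paper intends: each of the seven assertions is extracted from condition~(iii) of Definition-Proposition~\ref{rexalgdef} by exhibiting a 3-simplex (tetrahedron of operations and isotopies) some of whose faces are degenerate, using condition~(ii) to send the constant components to identities, and then reading off the identity \eqref{2isoeq}. The paper itself supplies no written proof for Lemma~\ref{pseudofun} beyond the preceding sentence asserting that these are ``various special cases of condition~(iii),'' so your sketch, while still schematic, is more explicit than what the paper records and follows exactly the intended route.
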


\begin{conv}
	A right exact functor $F: \fdvect \rightarrow \catC$ in $\Rex$ is uniquely determined by the image of $\kk \in \fdvect$. We will identify $F$ with $F(\kk) \in \catC$ without further comment. 
\end{conv}

In the coming three subsections we will construct assignments of $\kk$-linear categorical $\ztd_n$-algebras for  $n=1$, $n=2$, $n\geq 3$ out of  $\kk$-linear $\Z_2$-monoidal, $\Z_2$-braided, $\Z_2$-symmetric pairs, and vice versa. 
That these assignments are inverse equivalences is shown in Section \ref{sec:genalg}.
\begin{rmk}
	We delay proving the assignments are equivalences because we don't want to write down all the data here that determines an isomorphism of $\ztd_n$-algebras in $\Rex$. 
\end{rmk}
\subsection{$\Z_2$-tensor pairs versus $\ztd_1$-algebras} \label{subsec:tensorcat}

By a \emph{tensor category} we will a monoidal category $\catA$ such that $\catA\in \Rex$ and $\tensor \in \text{Bilin}(\catA \times \catA, \catA)$.  
\begin{rmk}
	By definition of the Deligne-Kelly product $\boxt$ the tensor product $\tensor$ corresponds to a functor $\tensor: \catA \boxt \catA \rightarrow \catA$. 
	A tensor category is thus exactly an $E_1$-algebra in $\Rex$.
\end{rmk}
A right module category over a tensor category $\catA$ is an $\catA$-module category $\catM$ such that $\catM\in \Rex$ and $\act \in \text{Bilin}(\catM \times \catA, \catM)$.

\begin{defn}
	A \emph{$\Z_2$-tensor pair} consists of a tensor category $\catA$ with an anti-involution $\Phi$ and a right $\catA$-module category $\catM$ with a pointing $\1_\catM \in \catM$.
\end{defn}

\begin{rmk} \label{klincoh}
	One obtains a $\Z_2$-monoidal pair from a $\Z_2$-tensor pair by forgetting $\kk$-enrichment.
	As forgetting $\kk$-enrichment is \emph{faithful}\footnote{Two morphisms in a $\kk$-linear category are equal iff they are equal in the underlying plain category.} 
	Theorems \ref{braidcoh}, \ref{moncoh}, \ref{symcoh} automatically yield coherence theorems for their $\kk$-linear tensor category analogues.
\end{rmk}

\begin{prop} \label{prop:1rn}
	To a $\ztd_1$-algebra $F$ in $\Rex$ there is an associated $\Z_2$-tensor pair $(\catA_F,\catM_F)$.
\end{prop}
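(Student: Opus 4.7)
The plan is to extract the structure maps of a $\Z_2$-tensor pair from specific generating operations and isotopies in $\ztd_1$, and then to use the homotopical simplicity of $\ztd_1$ to force coherence automatically. I would set $\catA_F := F(\DD)$ and $\catM_F := F(\DD_*)$; these lie in $\Rex$ by definition of $F$. I would then pick once and for all: a representative $\mu \in \ztd_1(\DD^{\amalg 2}, \DD)$ placing both $D^1_b$-components of the inputs into $D^1_b$ of the target in left-to-right order; the empty operation $e \in \ztd_1(\emptyset,\DD)$; a representative $\phi \in \ztd_1(\DD,\DD)$ of the nontrivial component, i.e.\ the equivariant embedding swapping $D^1_b$ and $D^1_r$; a representative $\nu \in \ztd_1(\DD_* \amalg \DD,\DD_*)$ embedding $\DD_*$ into a symmetric neighbourhood of $0$ and placing the $D^1_b$-component of the $\DD$-input just to the right of it; and the empty operation $e_* \in \ztd_1(\emptyset,\DD_*)$. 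The carrier data of the $\Z_2$-tensor pair would then be $\tensor := F(\mu)$, $\1 := F(e)$, $\Phi := F(\phi)$, $\act := F(\nu)$, and $\1_\catM := F(e_*)$.

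The coherence natural isomorphisms $\alpha, \lambda, \rho, a, r, \Phi_2, \Phi_0, t$ would then arise as $F$ applied to explicit isotopies between appropriate compositions of the generators above. For instance, $\alpha_{X,Y,Z}$ would come from an isotopy between $\mu \circ (\mu \amalg \id)$ and $\mu \circ (\id \amalg \mu)$, both embedding three $\DD$'s into $\DD$ with their blue parts in $D^1_b$ of the target in the same left-to-right order. The anti-monoidality map $\Phi_2: \Phi(X) \tensor \Phi(Y) \cong \Phi(Y \tensor X)$ would be $F$ applied to an isotopy from $\mu \circ (\phi \amalg \phi)$ to $\phi \circ \mu \circ \tau$, where $\tau \in S_2$ is the transposition; the order reversal on the right is forced because the $\Z_2$-action negates coordinates on $D^1$, and this is precisely the topological origin of the anti-monoidality of $\Phi$. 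The isomorphism $t: \Phi^2 \cong \id$ would come from an isotopy between $\phi \circ \phi$ and $\id_\DD$, and the module-category natural isomorphisms $a$ and $r$ would be constructed analogously using $\nu$ and $e_*$.

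Verifying all the coherence axioms (Mac Lane pentagon and triangle, compatibility $\Phi(t)=t_\Phi$, monoidality of $\Phi_2$ and $\Phi_0$, and the module-category analogues) reduces, by condition (iii) of Definition-Proposition~\ref{rexalgdef}, to exhibiting a 2-isotopy between the compositions of isotopies defining the two sides of each equation. The decisive fact is that by Proposition~\ref{htype} together with Lemmas~\ref{order1} and~\ref{order2}, the spaces $\ztd_1(\DD^{\amalg k}, \DD)$ and $\ztd_1(\DD_* \amalg \DD^{\amalg k}, \DD_*)$ are disjoint unions of contractible open subsets indexed by $\{0,1\}^k \times S_k$. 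Therefore any two isotopies with the same pair of endpoints are automatically 2-isotopic, and every coherence axiom collapses to the tautological check that the two sides have matching source and target operations, which will hold by construction. The main obstacle I anticipate is purely organisational—keeping track of the sizeable collection of structural natural isomorphisms and matching their sources and targets carefully—since the genuine topological content, namely the homotopy discreteness of one-dimensional orbifold configuration spaces, already does all the real work of forcing the axioms.
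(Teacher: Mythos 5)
Your proposal matches the paper's proof in essence: you extract the carrier functors from the same generating operations (a multiplication, the nontrivial-component embedding, the action, and the empty operations), produce the structural isomorphisms from isotopies via the pseudofunctoriality of $F$, and deduce all coherence axioms from the homotopy discreteness of the one-dimensional operation spaces established in Proposition~\ref{htype} and Lemmas~\ref{order1} and~\ref{order2}. The one point you elide—that $F$ applied to an isotopy gives a comparison between $F$ of the composite operations, and one must then conjugate by the canonical isomorphisms $F(g\circ(\amalg f_i)) \cong F(g)\circ(\boxt F(f_i))$ of Lemma~\ref{pseudofun} to get the actual associator etc.\ acting between, say, $\tensor\circ(\tensor\boxt\id)$ and $\tensor\circ(\id\boxt\tensor)$—is handled explicitly in the paper but is a routine bookkeeping step and does not affect the correctness of your argument.
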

\begin{proof}
	We set $\catA_F := \catA$ and $\catM_F := \catM$.
	Consider the fixed operations $f_\tensor$, $\id_\DD$, $f_\Phi$, $f_\1$, $f_\act$, $\id_{\DD_*}$, $f_{\1_{\catM}}$ in Table \ref{table:ztd1op}.
	The operations are assigned functors that for which we introduce the following suggestive notation, respectively: $\tensor$, $\id_\catA$, $\Phi$, $\1$, $\act$, $\id_{\catM}$ and $\1_{\catM}$.
	Note that by Definition-Proposition \ref{rexalgdef} $(i)$ the functors $\id_\catA$ and $\id_\catM$ are indeed the identity functors on $\catA$ and $\catM$. 
	
	Next we need to show there exist natural isomorphisms $\alpha, \lambda,\rho,\Phi_2,\Phi_0,t,r,a$ that are part of the definition of a $\Z_2$-tensor pair. 
	One can define all the isomorphisms analogously, therefore we only define $\alpha$ leaving the others to the reader.
	Consider the composite operations $f_{\tensor} \circ (f_{\tensor} \amalg \id_\DD)$ and $f_{\tensor} \circ (\id_{\DD} \amalg f_{\tensor})$. 
	Observe that the two composites define isotopic operations since they lie in the same connected components lying over $(0,0,0,\id) \in \{0,1\}^{\times 3} \times S_3$ in the notation of Lemma \ref{order1}.
	Choose an isotopy $f_{\alpha}$ between them, then by Lemma \ref{pseudofun}.1 we have an associated natural isomorphism $F\big(f_{\alpha}): F(f_{\tensor} \circ (f_{\tensor} \amalg \id_\DD) \big) \cong F\big(f_{\tensor} \circ (\id_{\DD} \amalg f_{\tensor}\big)$.
	Using the natural isomorphisms of Lemma \ref{pseudofun}.5 we can define a natural isomorphism $\alpha$ by requiring that the diagram
	\[
	\begin{tikzcd}
	F(f_{\tensor} \circ (f_{\tensor} \amalg \id_\DD) \big) \arrow[r,"\cong"] \arrow[d,"F(f_\alpha)"] & F(f_{\tensor}) \circ \big( F(f_{\tensor}) \boxt F(\id_{\DD}) \big) = \tensor \circ (\tensor \boxt \id_{\catA}) \arrow[d,"\alpha"]\\
	F\big(f_{\tensor} \circ (\id_{\DD} \amalg f_{\tensor}) \big) \arrow[r,"\cong"] & F(f_{\tensor}) \circ \big( F(\id_{\DD}) \boxt F(f_{\tensor}) \big) = \tensor \circ (\id_\catA \boxt \tensor)
	\end{tikzcd}
	\]
	commutes.
	
	To show that our functors and natural isomorphisms equip $(\catA,\catM)$ with the structure of a $\Z_2$-tensor pair, we need to check that the natural isomorphisms satisfy the defining equations of a $\Z_2$-tensor pair. 
	Since all such equations can be treated similarly, we will verify one case in detail leaving the others to the reader. 
	We will verify that $\alpha$, $\lambda$ and $\rho$ satisfy the triangle axiom of a monoidal category.
	By definition the triangle axiom requires that for every $X,Y \in \catA$  the diagram 
	\[ 
	\begin{tikzcd}
	(X \tensor \1) \tensor Y \arrow[dr, swap,"\rho_X \tensor \id_Y"] \arrow[rr,"\alpha"] & & X \tensor  (\1 \tensor Y) \arrow[dl, "\id_X \tensor \lambda_Y"] \\
	& X \tensor Y &
	\end{tikzcd} 
	\]
	commutes. 
	This is equivalent to asking that the equation 
	\begin{align} \label{trieq}
	\id_{\tensor} * (\id_{\id_{\catA}} \boxt \rho) = \big(\id_{\tensor} *  (\lambda \boxt \id_{\id_{\catA}}) \big) \; \circ \; \big( \alpha * \id_{\id_{\catA} \boxt \1 \boxt \id_{\catA}} \big) 
	\end{align}
	holds.
	Let us consider the isotopy $\id_{f_{\tensor}} * (\id_{\id_{\DD}} \amalg f_\rho)$ and choose a composite isotopy  
	$\big(\id_{f_{\tensor}} *  (f_\lambda \amalg \id_{\id_{\DD}}) \big)  \circ \big( f_\alpha * \id_{\id_{\DD} \amalg f_\1 \amalg \id_{\DD}} \big)$.
	Note that both these isotopies are paths between the same operations, and recall that by Lemmas \ref{htype} and \ref{order1} the space of operations $\ztd_n(\DD^3,\DD)$ contracts onto a discrete space.
	We conclude the two isotopies are 2-isotopic, so that their assigned natural isomorphisms are equal by Lemma \ref{pseudofun}.3.
	Now consider the diagram
	\[
	\begin{tikzcd}[column sep =77pt]
	\tensor \circ (\tensor \boxt \id_{\catA}) \circ (\id_\catA \boxt \1 \boxt \id_{\catA}) \arrow[rrrd, bend left=10,"\id_{\tensor} * (\id_{\id_{\catA}} \boxt \rho)"] \arrow[ddd,swap,"\alpha*\id_{\id_{\catA} \boxt \1 \boxt \id_{\catA}}"]& & & \\
	&  \bullet \arrow[lu,"\cong"] \arrow[r,"F\big(\id_{f_{\tensor}} * (\id_{\id_{\DD}} \amalg f_\rho)\big)"] \arrow[d,swap,"F\big( f_\alpha * \id_{\id_{\DD} \amalg f_\1 \amalg \id_{\DD}} \big)"] &  F(f_{\tensor}) \arrow[r,"="]& \tensor \\
	& \bullet \arrow[ld,"\cong"] \arrow[ru,swap,"F\big(\id_{f_{\tensor}} *  (f_\lambda \amalg \id_{\id_{\DD}}) \big)"] &  & \\
	\tensor \circ (\id_{\catA} \boxt \tensor) \circ (\id_\catA \boxt \1 \boxt \id_{\catA})\arrow[rrruu, bend right=15,swap,"\id_{\tensor} *  (\lambda \boxt \id_{\id_{\catA}})"] & & & 
	\end{tikzcd}
	\]
	and note that the arrows $\cong$ are unambiguous by Lemma \ref{pseudofun} part 6. 
	Then all inner squares commute by Lemma \ref{pseudofun} and the definitions of $\lambda$, $\rho$ and $\alpha$. 
	The inner triangle commutes by Lemma \ref{pseudofun} parts 3 and 4.
	Hence the outer sides of the diagram commute which expresses Equation \eqref{trieq} holds.
\end{proof}

\begin{table}[h]
	\caption{Certain operations in the $\ztd_1$-operad are fixed by their visualisations.}
	\begin{tabular}{| l | l| } \hline
		Operation & Visualisation \\
		\hline
		$f_\tensor: \DD \amalg \DD \rightarrow \DD$ &  
		\begin{tikzpicture} 
		\draw [(-),teal,dotted] (0,0) -- (3.5,0);
		\draw [(-),purple,dotted] (4,0) --(7.5,0);
		\draw [(-),teal,thick] (0.5,0) -- (1.5,0);
		\draw [(-),teal,thick] (2,0) -- (3,0);
		\draw [(-),purple,thick] (4.5,0) -- (5.5,0);
		\draw [(-),purple,thick] (6,0) -- (7,0);
		\node at (1,.5) {$1_b$};
		\node at (2.5,.5) {$2_b$};
		\node at (5,.5) {$2_b$};
		\node at (6.5,.5) {$1_b$};
		\end{tikzpicture} \\ \hline
		$\id_\DD: \DD \rightarrow \DD$ &
		\begin{tikzpicture} 
		\draw [(-),teal] (0,0) -- (3.5,0);
		\draw [(-),purple] (4,0) --(7.5,0);
		\end{tikzpicture}  \\ \hline
		$f_\Phi: \DD \rightarrow \DD$ &
		\begin{tikzpicture} 
		\draw [(-),teal,dotted] (0,0) -- (3.5,0);
		\draw [(-),purple,dotted] (4,0) --(7.5,0);
		\draw [(-),purple,thick] (0.5,0) -- (3,0);
		\draw [(-),teal,thick] (4.5,0) --(7,0);
		\end{tikzpicture}  \\ \hline 
		$f_{\1}: \emptyset \rightarrow \DD$ &
		\begin{tikzpicture} 
		\draw [(-),teal,dotted] (0,0) -- (3.5,0);
		\draw [(-),purple,dotted] (4,0) --(7.5,0);
		\end{tikzpicture}  \\ \hline
		$f_{\act}: \DD_* \amalg \DD \rightarrow \DD_*$ &
		\begin{tikzpicture} 
		\draw [(-),dotted] (0,0) -- (5,0);
		\draw [(-),purple,thick] (0.5,0) -- (1.5,0);
		\draw [(-),black,thick] (2,0) -- (3,0);
		\draw [(-),teal,thick] (3.5,0) -- (4.5,0);
		\node at (2.5,0) {$\ast$};
		\end{tikzpicture} \\ \hline
		$\id_{\DD_*}: \DD_* \rightarrow \DD_*$ &
		\begin{tikzpicture} 
		\draw [(-),thick] (0,0) -- (5,0);
		\node at (2.5,0) {$\ast$};
		\end{tikzpicture}  \\ \hline
		$f_{\1_\catM}: \emptyset \rightarrow \DD_*$ & 
		\begin{tikzpicture} 
		\draw [(-),dotted] (0,0) -- (5,0);
		\node at (2.5,0) {$\ast$};
		\end{tikzpicture} \\ \hline
	\end{tabular}
	\label{table:ztd1op}
\end{table}

\begin{thm} \label{thm:1rs}
	To a $\Z_2$-tensor pair $(\catA,\catM)$ there is an associated $\ztd_1$-algebra $F_{(\catA,\catM)}$ in $\Rex$.
\end{thm}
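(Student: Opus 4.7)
The plan is to reverse the construction of Proposition \ref{prop:1rn}, assigning functors and natural isomorphisms to operations and isotopies by reading off their combinatorial data, and then using the coherence theorem for $\Z_2$-monoidal pairs (Theorem \ref{moncoh}) to verify the compatibility condition \eqref{2isoeq}.

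First, I would fix a rule associating a functor $F_{(\catA,\catM)}(f)$ to each operation $f$. By Lemmas \ref{order1} and \ref{order2}, the connected component of an operation with $k$ inputs is determined by a tuple $(\ep_1,\dots,\ep_k,\sigma) \in \{0,1\}^{\times k}\times S_k$ recording the colour/half of each input and the linear order of its centre. Given this tuple, I define the functor as the left-parenthesized tensor (or action, in the $\DD_*$ case) of the inputs in the order $\sigma$, with $\Phi$ applied to each input whose $\ep_i=1$. Empty-input operations get $\1$ or $\1_\catM$, and identities get identity functors, as required by Definition-Proposition \ref{rexalgdef}(i). Because composition of operations corresponds to the evident substitution of tuples, a composite $g\circ(\sigma\circ \amalg_i f_i)$ is assigned a functor canonically isomorphic to $F(g)\circ(\sigma \circ \boxt_i F(f_i))$ via a structural isomorphism (in the sense of \S\ref{sec:coherence}) built from $\alpha$, $\lambda$, $\rho$, $a$, $r$, $\Phi_2$, $\Phi_0$ and $t$.

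Second, I would assign to each isotopy $\gamma$ between parallel composite operations a natural isomorphism $F(\gamma)$. By Lemmas \ref{order1} and \ref{order2} the spaces of operations deformation retract onto discrete sets, so the two endpoints lie in the same component exactly when their combinatorial tuples agree. The functors assigned in Step 1 then differ only by reparenthesization, reordering of the common tuple and insertions of $\Phi^2 \simeq \id$ and unit isomorphisms; a structural isomorphism witnessing this exists. I pick any such structural isomorphism and define $F(\gamma)$ to be it. Theorem \ref{moncoh} guarantees that this choice is in fact \emph{unique}: any two parallel structural isomorphisms in a $\Z_2$-monoidal pair are equal. In particular the natural isomorphism depends only on the endpoints of $\gamma$, not on the isotopy itself, which takes care of both Definition-Proposition \ref{rexalgdef}(ii) and the constant-isotopy cases.

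Third, I would verify condition \eqref{2isoeq}. By Step 2 both sides are determined by their source and target functors, which are determined by combinatorial tuples of the corner operations of the square; since the square is filled by a $2$-isotopy, these tuples coincide. Hence both sides are parallel structural isomorphisms in the $\Z_2$-monoidal pair underlying $(\catA,\catM)$ (Remark \ref{klincoh} gives us the $\kk$-linear coherence), so they agree by Theorem \ref{moncoh}.

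The main obstacle is really just bookkeeping: one must package the combinatorics of the tuples $(\ep_1,\dots,\ep_k,\sigma)$ into a functorial rule that is compatible with operadic composition up to \emph{specified} structural isomorphisms. Once this bookkeeping is in place, every verification collapses to the statement that two parallel structural isomorphisms in a $\Z_2$-monoidal pair coincide, which is exactly Theorem \ref{moncoh}. The essential content is therefore the discreteness of the operation spaces in $\ztd_1$ together with the coherence theorem already proved in Section \ref{sec:coherence}.
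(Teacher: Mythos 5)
Your proposal is correct and follows essentially the same strategy as the paper: assign functors according to the combinatorial tuples from Lemmas \ref{order1} and \ref{order2} via the formulas \eqref{rass1}--\eqref{rass3}, assign natural isomorphisms to isotopies by producing a structural isomorphism between the corresponding composites, and then invoke Theorem \ref{moncoh} (together with Remark \ref{klincoh} to pass to the $\kk$-linear setting) both to show the assignment is unambiguous and to verify condition (iii).
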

\begin{proof}
	We will abbreviate $F_{(\catA,\catM)}$ by $F$.
	First we define the assignment of functors to embeddings.
	To do that we introduce some notation.
	Recall we can assign a tuple $(\ep_1,\dots,\ep_k) \in \{0,1\}^{\times k}$ and a permutation $\sigma \in S_k$ to operations via the Lemmas \ref{htype}, \ref{order1} and \ref{order2}.
	We denote $\Phi^0 := \id_\catA$ and $\Phi^1 := \Phi$, and $\tensor_0:= \fdvect \xrightarrow{\1} \catA, \tensor_1 := \id_\catA$ and $\tensor_n := \tensor \circ (\tensor_{n-1} \boxt \id_\catA): \catA^{\boxt n} \rightarrow \catA$. 
	
	We can then define the associated functors to operations in $\ztd_n( \DD^{\amalg k}, \DD)$ by
	\begin{align} \label{rass1}
	&\catA^{\boxt k} \xrightarrow{\Phi^{\ep_1} \boxt \dots \boxt \Phi^{\ep_k}} \catA^{\boxt k} \xrightarrow{\sigma} \catA^{\boxt k} \xrightarrow{\tensor_k} \catA,
	\end{align}
	and define the associated functors to operations in $\ztd_n( \DD^{\amalg k}, \DD_*)$ by
	\begin{align} \label{rass2} 
	&\catA^{\boxt k} \xrightarrow{\Phi^{\ep_1} \boxt \dots \boxt \phi^{\ep_k}} \catA^{\boxt k} \xrightarrow{\sigma} \catA^{\boxt k} \xrightarrow{\tensor_k} \catA \xrightarrow{\1_{\catM} \tensor -} \catM,
	\end{align}
	and define the associated functors to operations in $\ztd_n(\DD_* \amalg \DD^{\amalg k}, \DD_*)$ by
	\begin{align} \label{rass3} 
	&\catM \boxt \catA^{\boxt k} \xrightarrow{\id_\catM \boxt \Phi^{\ep_1} \boxt \dots \boxt \Phi^{\ep_k}} \catM \boxt A^{\boxt k} \xrightarrow{\id_\catM \boxt \sigma} \catM \tensor \catA^{\boxt k} \xrightarrow{\id_\catM \boxt \tensor_k} \catM \boxt \catA \xrightarrow{\act} \catM.
	\end{align}
	Note this assignment satisfies Definition-Proposition \ref{rexalgdef} $(i)$.
	
	Next we will assign the natural isomorphisms.
	To an isotopy between operations $g \circ (\sigma_f \circ (\amalg f_j))$ and $h$ we will assign a structural isomorphisms between the assigned functors $F(g) \circ (\sigma\circ (\boxt F(f_j))$ and $F(h)$.
	Combining Theorem \ref{moncoh} and Remark \ref{klincoh} we know there is at most one structural isomorphism  between two given functors.
	Hence to unambiguously specify the assignment it suffices to show that there exists a structural isomorphism.
	We will only discuss one particular case of an assignment, leaving the others to the reader since they are all analogous.
	Consider operations $g \circ (\sigma_F\circ (\amalg_j f_j))$ and $h$ with codomain $\DD$ that are isotopic.
	Note that then all the $f_j$ must also be operations with codomain $\DD$. 
	We consider their images under the maps of Lemma \ref{htype} and \ref{order1}:
	\begin{align*}
	&\ztd_1(\DD^{\amalg i},\DD) \rightarrow F_i[\DD/\Z_2] \rightarrow \{0,1\}^{\times i} \times S_i & &\\
	&h \mapsto (\epsilon_1,\dots,\epsilon_k, s) &  &\text{ here } i = k,\\
	&g \mapsto (\ep_1,\dots,\ep_k, \sigma) & &\text{ here } i = j,\\
	&f_j \mapsto (\ep_1^j,\dots,\ep_{k_j}^j,\sigma_j) & &\text{ here } i = k_j.
	\end{align*}
	As $g \circ (\sigma_f\circ(\amalg_j f_j)) \simeq h$ and $\ztd_n(\DD^k,\DD)$ contracts onto the discrete space $\{0,1\}^{\times k}\times S_k$ we find
	\begin{align*}
	&s = \sigma \circ \big( \sigma_f * (\sigma_1 \times \dots \times \sigma_j)\big) \text{ as permutations in } S_k,\\
	&\epsilon_{s(k_1 + \dots + k_{j-1} + n)} = \ep_{\sigma(j)} + \ep^{\sigma_f(j)}_{\sigma_j (n)} \; \mathrm{mod} \;2 \quad \quad \text{ for any } j, \; 1 \leq n \leq k_j.
	\end{align*}
	Hence $F(h)$ and $F(g)\circ (\sigma_f \circ (\boxt_j F(f_j)))$ multiply a tuple of objects $(X_1,\dots,X_k)$ in $\catA$ in the same order, and with the same distribution of $\Phi$'s (modulo 2).  
	Then we can define a structural isomorphism between $F(h)$ and $F(g) \circ (\boxt_j F(f_j))$ via repeated applications of $\alpha, \phi$, and $t$ (and possibly $\phi_0$, $\lambda$ and $\rho$ in case one or more of the $k_j=0$). 
	
	Note that our assignment of natural isomorphisms satisfies Definition-Proposition \ref{rexalgdef} $(ii)$. 
	It remains to check condition $(iii)$. Consider a diagram of isotopies and operations as in condition $(iii)$ together with a 2-isotopy. 
	The two sides of Equation \eqref{2isoeq} are  structural isomorphisms by construction of our assignments.
	Thus the two sides are parallel structural isomorphisms.
	Then they must agree by Theorem \ref{moncoh} combined with Remark \ref{klincoh}.
\end{proof}

\subsection{$\Z_2$-braided pairs versus $\ztd_2$-algebras}

\begin{defn}
	A \emph{$\Z_2$-braided tensor pair} is a $\Z_2$-tensor pair $(\catA,\catM)$ together with a braiding $\sigma$ on $\catA$, such that $\Phi$ is braided monoidal, and a natural isomorphism 
	\[\kappa: \tensor_{\catM} \Rightarrow \tensor_{\catM} \circ (\id_{\catM} \boxt \Phi)\] 
	such that the diagrams of Figures \ref{fig:bp1} and \ref{fig:bp2} commute for all $M\in \catM$, $X,Y \in \catA$.
\end{defn}
To study categorical $\ztd_2$-algebras we will make use of an ordering on $\R^2$ that subdivides the spaces of operations.
\begin{defn} \label{assignmentdef} Let $\DD$ and $\DD_*$ be of dimension $n >1$.
	\begin{enumerate}
		\item For $x = (x_1,\dots,x_n)$, $y = (y_1,\dots,y_n) \in \R^n$. We write $x<y$ iff $x_1 < x_2$ or there exists $i>1$ such that $x_1 = y_1,x_2 = y_2,\dots, x_i = y_i$ and $x_{i+1} < y_{i+1}$.
		\item To $x = (x_1,\dots,x_k) \in F_k [\DD/\Z_2]$ or $x = (x_1,\dots,x_k) \in F_k[\DD_*/\Z_2]$ we assign $(\ep_1,\dots,\ep_k)\in \{0,1\}^{\times k}$ as in \ref{tupledef1} respectively \ref{tupledef2}.
		\item For $x \in F_k [\DD/\Z_2]$ or $x\in F_k[\DD_*/\Z_2]$ Equation \eqref{sigmadef} defines a unique $\sigma \in S_k$.
	\end{enumerate}
\end{defn}
\begin{rmk}Please note that the assignments 
	\begin{align}
	F_k[\DD/\Z_2] \rightarrow \{0,1\}^{\times k} \times S_k, \label{assdef1}\\
	F_k[\DD_*/\Z_2] \rightarrow \{0,1\}^{\times k} \times S_k \label{assdef2}
	\end{align}
	of Definition \ref{assignmentdef} do not define continuous maps. 
\end{rmk}
\begin{lem} \label{lem:braidassign} Let $n=2$ and $k\geq 0$ and fix basepoints of the spaces $F_k[\DD/\Z_2]$ and $F_k[\DD_*/\Z_2]$. 
	\begin{enumerate} 
		\item To a path $\gamma$ in $F_k[\DD/\Z_2]$ there is a canonically associated braid $[\gamma] \in B_n$ such that $[\gamma_1 * \gamma_2] = [\gamma_1][\gamma_2]$ and $[\gamma^{-1}] = [\gamma]^{-1}$.
		\item  To a path $\gamma$ in $F_k[\DD_*/\Z_2]$ there is a canonically associated braid $[\gamma] \in \bcyl_n$ such that $[\gamma_1 * \gamma_2] = [\gamma_1][\gamma_2]$ and $[\gamma^{-1}] = [\gamma]^{-1}$.
	\end{enumerate}
\end{lem}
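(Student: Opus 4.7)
My plan is to associate braids to paths by projecting to the $K(\pi,1)$ spaces $C_k[X/\Z_2]$ furnished by Lemma \ref{kpi1}, and closing up the projected paths using canonical paths determined by the Fox--Neuwirth-style stratification coming from Definition \ref{assignmentdef}. The two cases $X=\DD$ and $X=\DD_*$ are parallel, so I focus on $X=\DD$ with target $B_k$.

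First, I will set up the covering and the closing-path data. The quotient $p: F_k[\DD/\Z_2] \to C_k[\DD/\Z_2]$ is a regular covering with deck group $\Gamma = \Z_2^k \rtimes S_k$ acting freely, and Lemma \ref{kpi1} identifies $\pi_1(C_k[\DD/\Z_2], [p_0]) \cong B_k$ after a basepoint choice. The induced action of $\Gamma$ on the discrete data $\{0,1\}^k \times S_k$ of Definition \ref{assignmentdef} is free and transitive, so for a fixed generic basepoint $p_0 \in F_k[\DD/\Z_2]$ and any generic $z$, there is a unique $g_z \in \Gamma$ such that $g_z \cdot p_0$ shares the $(\ep_z, \sigma_z)$-data of $z$. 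The open stratum $U_{\ep,\sigma} \subset F_k[\DD/\Z_2]$ of configurations with fixed generic $(\ep, \sigma)$-data is cut out by strict linear inequalities on the first coordinates of the constituent points, hence is convex in $\DD^k$ and contractible. I therefore take $\alpha_z$ to be the straight-line path inside $U_{\ep_z, \sigma_z}$ from $g_z \cdot p_0$ to $z$, which is canonical up to homotopy rel endpoints. For non-generic $z$, an arbitrarily small perturbation lands in a neighboring open stratum, and the resulting closing path is still well-defined up to homotopy because the space of admissible perturbations is contractible.

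Next, I will define $[\gamma]$: given a path $\gamma: [0,1] \to F_k[\DD/\Z_2]$ from $x$ to $y$, set $\hat\gamma := \alpha_x^{-1} * \gamma * \alpha_y$, a path from $g_x \cdot p_0$ to $g_y \cdot p_0$ in $F_k[\DD/\Z_2]$. The projection $p(\hat\gamma)$ is then a loop at $[p_0]$ in $C_k[\DD/\Z_2]$, and I declare $[\gamma] \in B_k$ to be its homotopy class via the identification of Lemma \ref{kpi1}. For the multiplicativity $[\gamma_1 * \gamma_2] = [\gamma_1][\gamma_2]$, I will use that $\alpha_y * \alpha_y^{-1}$ is null-homotopic rel endpoints by contractibility of $U_{\ep_y, \sigma_y}$, whence $\widehat{\gamma_1 * \gamma_2} \simeq \hat\gamma_1 * \hat\gamma_2$ in $F_k[\DD/\Z_2]$ and the projection yields the desired identity in $\pi_1(C_k[\DD/\Z_2], [p_0])$. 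For the inverse, directly, $\widehat{\gamma^{-1}} = \alpha_y^{-1} * \gamma^{-1} * \alpha_x = \hat\gamma^{-1}$, so $[\gamma^{-1}] = [\gamma]^{-1}$.

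The main obstacle will be verifying the contractibility of the open $(\ep,\sigma)$-strata and handling non-generic configurations cleanly. The generic statement is elementary, since the lex order of Definition \ref{assignmentdef} reduces, away from a measure-zero locus, to strict inequalities on first coordinates alone; however, checking that the non-generic locus has codimension at least $1$ in this two-dimensional setting, and that all perturbation choices yield homotopic closing paths, is the delicate technical point. Once this is in place, the two stated properties are essentially formal consequences of the construction.
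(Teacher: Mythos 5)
Your construction is essentially the paper's own argument: stratify $F_k[X/\Z_2]$ by the discrete $(\epsilon,\sigma)$-data of Definition \ref{assignmentdef}, take the $\Z_2^{\times k}\rtimes S_k$-orbit of the basepoint as canonical representatives in each stratum, close $\gamma$ up to those representatives by paths inside the contractible strata, and project along the covering map $F_k[X/\Z_2]\to C_k[X/\Z_2]$ so that Lemma \ref{kpi1} identifies the class with a braid; multiplicativity and inversion then follow exactly as you argue. The only superfluous ingredient is your generic/non-generic dichotomy and the perturbation step --- the full set-theoretic fiber over each $(\epsilon,\sigma)$ is already convex (the lexicographic order of Definition \ref{assignmentdef} is preserved under convex combination), so the closing path lives directly in the fiber and no perturbation is needed.
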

\begin{proof}
	We will prove part two, and leave part one to the reader as the proof is analogous. 
	Consider the non-continuous projection 
	\[F_k[\DD_*/Z_2] \xrightarrow{\ref{assdef2}} \{0,1\}^{\times k} \times S_k\]
	and observe that the set-theoretic fibers of $p$ define contractible subspaces $\{F^i\}_{i=0}^N$ of $F_k[\DD_*/\Z_2]$.
	Let $\{x_1,\dots,x_N: x_i \in F^i\}$ denote the $\Z_2^{\times k} \times S_k$ orbit of the basepoint of $F_k[\DD_*/\Z_2]$.
	Given a path $\gamma$ in $F_k[\DD_*/\Z_2]$ with endpoints $\gamma_0 \in F^{i_0}$ and $\gamma_1 \in F^{i_1}$ we choose a path $e_0$ lying in $F^{i_0}$ from $x_{i_0}$ to $\gamma_0$ and a path $e_1$ lying $F^{i_1}$ from $\gamma_1$ to $x_{i_1}$. 
	Recall that we have the quotient map
	\[ q: F_k[\DD_*/\Z_2] \rightarrow C_k[\DD_*/\Z_2],\] 
	we then define 
	\[ [\gamma] := [q(e_1 * \gamma * e_0)] \in \pi_1 \big( C_k[\DD_*/\Z_2],q(x_0) \big) \overset{\ref{kpi1}}{=}  \bcyl_k. \]
	Since the subspaces $F^i$ are contractible one easily verifies the assignment is independent of the choice of $e_0$ and $e_1$ and respects composition and inversion.
\end{proof}
\begin{rmk} \label{rmk:braidassign}
	Consequently, Lemma \ref{lem:braidassign} together with the homotopy equivalences of Proposition \ref{htype} allows us to associate braids to isotopies of operations in $\ztd_2$. Moreover, note that 2-isotopic isotopies are assigned the same braids.
\end{rmk}

\begin{prop} \label{prop:2rn}
	To a $\ztd_2$-algebra $F$ in $\Rex$ there is an associated $\Z_2$-braided tensor pair $(\catA_F,\catM_F)$.
\end{prop}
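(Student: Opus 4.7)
The plan is to extend the construction of Theorem \ref{thm:1rs} from $\ztd_1$ to $\ztd_2$. First I would observe that there is an obvious map of operads $\ztd_1 \hookrightarrow \ztd_2$ (identifying $1$-dimensional operations with their product with an interval in the second coordinate), so restriction equips $(\catA_F,\catM_F) := (\catA,\catM)$ with the structure of a $\Z_2$-tensor pair by Proposition \ref{prop:1rn}. This gives us for free the tensor product, unit, module action, anti-involution $\Phi$ with $t:\Phi^2 \simeq \id$, along with $\alpha,\lambda,\rho,a,r,\Phi_2,\Phi_0$.

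Next I would produce the braiding and the $\Z_2$-cylinder braiding. For $\sigma$, consider the two composite operations $f_{\tensor}$ and $f_{\tensor}\circ \tau$ in $\ztd_2(\DD\amalg \DD,\DD)$ (where $\tau$ is the swap); choose an isotopy $f_\sigma$ between $f_\tensor$ and $f_{\tensor}\circ\tau$ whose underlying braid (via Remark \ref{rmk:braidassign}) is the standard generator of $B_2$, and define $\sigma:=F(f_\sigma)$. For $\kappa$ do the analogous thing in $\ztd_2(\DD_*\amalg \DD, \DD_*)$: choose an isotopy $f_\kappa$ between $f_\act$ and $f_\act\circ(\id_{\DD_*}\amalg f_\Phi)$ whose underlying cylinder braid is the generator $\kappa\in \bcyl_1$ (cf.\ Figure \ref{fig:z2monodromy}), and set $\kappa:=F(f_\kappa)$.

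The core of the proof is then verifying all the axioms: the hexagon axioms for $\sigma$, the axioms BP1 and BP2 of Figures \ref{fig:bp1}, \ref{fig:bp2}, and that $\Phi$ is braided. My strategy, uniformly across these checks, will be the same as in the proof of Theorem \ref{thm:1rs}: both sides of each axiom are, by construction and Lemma \ref{pseudofun}, structural isomorphisms in the $\Z_2$-braided tensor pair sense between the same source and target functors (here one needs the $\kk$-linear version of Theorem \ref{braidcoh}, valid by the faithfulness argument of Remark \ref{klincoh}). By Theorem \ref{braidcoh} it then suffices to check that the underlying braids agree. For each axiom I would exhibit the two sides as the image under $F$ of a loop of operations in the appropriate configuration space, and verify by inspection of Figures \ref{fig:bp1}, \ref{fig:bp2} (and the standard Yang--Baxter/hexagon pictures) that the corresponding two coloured braids are equal; alternatively, both paths lift to a 2-isotopy in $\ztd_2$ so that Lemma \ref{pseudofun}(3) directly gives the equation. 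The braided monoidality of $\Phi$ is checked similarly: the natural isomorphism $\Phi_2$ and the braiding come from chosen isotopies whose composites on either side of the braided-functor axiom realise the same element of $B_3$.

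The one subtle point, which I expect to be the main obstacle, is well-definedness: the isotopy $f_\sigma$ (and similarly $f_\kappa$) is only unique up to 2-isotopy within its homotopy class, and one must ensure the chosen class is the one corresponding to the generator (not, say, its inverse or a power). Once the generators of the respective braid groups are pinned down, Lemma \ref{lem:braidassign} together with the coherence Theorem \ref{braidcoh} make the rest of the verification essentially combinatorial: translate each axiom into an equality of coloured braids, and check it holds in $B_n$ or $\bcyl_n$. This is exactly why the axioms BP1 and BP2 were stated in the form of Figures \ref{fig:bp1} and \ref{fig:bp2}---so that they match the braid-theoretic identities arising from isotopies in $\ztd_2$.
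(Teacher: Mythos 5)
Your overall construction matches the paper's, but the verification step as you primarily state it is circular. To check that the $\sigma$, $\kappa$, $\Phi_2$, $a$, etc.\ you have built satisfy BP1, BP2 and the hexagon axioms, you propose to invoke Theorem \ref{braidcoh}; but Theorem \ref{braidcoh} is a theorem \emph{about} $\Z_2$-braided pairs, and these axioms are precisely what must hold before $(\catA,\catM)$ qualifies as one. The coherence theorem is used (legitimately) in Theorem \ref{thm:2rs}, which goes in the \emph{other} direction, starting from a $\Z_2$-braided tensor pair whose axioms are already given. For Proposition \ref{prop:2rn} the model should instead be the proof of Proposition \ref{prop:1rn}: both sides of each axiom are $F$ applied to isotopies of operations; by Proposition \ref{htype} and Lemma \ref{kpi1} the relevant operation space deformation-retracts onto a $K(\pi,1)$; so two such isotopies with the same endpoints are 2-isotopic once the corresponding elements of the (cylinder) braid group coincide (via Lemma \ref{lem:braidassign}); and then Lemma \ref{pseudofun}.3 gives the desired equality. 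You do mention this route (``alternatively, both paths lift to a 2-isotopy\ldots''), but present it as a fallback; in fact it is the only valid argument, and it is exactly what the paper does by ``fattening'' the braid diagrams of Figures \ref{fig:bp1}, \ref{fig:bp2} into 2-isotopies.

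Two smaller remarks. First, deriving the $\Z_2$-tensor-pair structure by restricting along an inclusion $\ztd_1\hookrightarrow\ztd_2$ is a genuine economization over the paper, which simply re-derives $\alpha,\lambda,\rho,a,r,\Phi_2,\Phi_0,t$ from two-dimensional isotopies; it is plausible, but you should at least indicate why the thickening $f(x)=\lambda x+t\mapsto (x,y)\mapsto(\lambda x+t,\lambda y)$ respects equivariance, disjointness and operadic composition, so that it is an honest map of $\infty$-operads. Second, your well-definedness worry is well taken: the axioms will not hold for an arbitrary isotopy $f_\tensor\simeq f_\tensor\circ\tau$, so the choice must be pinned down. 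The paper does this by concretely specifying the isotopy (rotating the two disks by $180$ degrees around their common center), which is the natural way to kill the ambiguity.
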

\begin{proof}
	We set $\catA_F := \catA$ and $\catM_F := \catM$.
	Consider the fixed operations $f_\tensor$, $f_\Phi$,  $f_\act$, in Figure \ref{table:ztd2op}, the identity operations $\id_\DD$ and $\id_{\DD_*}$ and the unique operations $f_{\1} \in \ztd_2(\emptyset,\DD)$ and $f_{\1_\catM} \in \ztd_2(\emptyset,\DD_*)$.
	These operations are by $F$ assigned functors that we denote respectively $\tensor$, $\Phi$, $\act$, $\id_\catA$,$\id_{\catM}$, $\1$, and $\1_{\catM}$. 
	Note that by Definition-Proposition \ref{rexalgdef} $(i)$ the functors $\id_\catA$ and $\id_\catM$ are indeed the identity functors on $\catA$ and $\catM$. 
	
	Next we need to show there exist natural isomorphisms $\alpha, \lambda,\rho,\sigma,\Phi_2,\Phi_0,t,r,a,\kappa$ that are part of the definition of a $\Z_2$-braided tensor pair.
	Our strategy is as follows. To define a structural isomorphism $\eta$, we choose an isotopy $\gamma$ such that the associated braid $[\gamma]$ is equal to the underlying braid $\beta_\eta$, and then define $\eta$ as the induced isomorphism of $F(\gamma)$.
	Let us treat the case of $\sigma$, leaving the others to the reader.
	Consider the `swap' permutation $\tau = (12) \in S_2$. 
	We choose the isotopy $i_\sigma$  from $f_\tensor$ to the composite operation $f_\tensor \circ (\tau \circ (\id_\DD \amalg \id_\DD))$, given by rotating the disks 180 degrees around each other. 
	By definition of $\ztd_2$-algebra the assignment $F(i_\sigma)$ is a natural isomorphism from $\tensor = F(f_\tensor)$ to $\tensor \circ (\tau \circ ( \id_\catA \boxt \id_\catA) = \tensor^{op}$. We then define $\sigma := F(i_\sigma)$.
	
	It remains to check that the natural isomorphisms satisfy the defining equations of a $\Z_2$-braided tensor pair.
	As in the proof of Theorem \ref{prop:1rn} one easily deduces that the equations hold from the existence of various 2-isotopies. 
	For example, consider axiom $BP1$. The isotopy of coloured braids drawn there can easily be `fattened' to give a 2-isotopy in $\ztd_2(\DD^{\amalg 2} \amalg \DD_*,\DD_*)$. The existence of this 2-isotopy implies $a, \Phi_2,\kappa$ and $\sigma$ satisfy axiom BP1.
	We leave the details to the reader.
\end{proof}

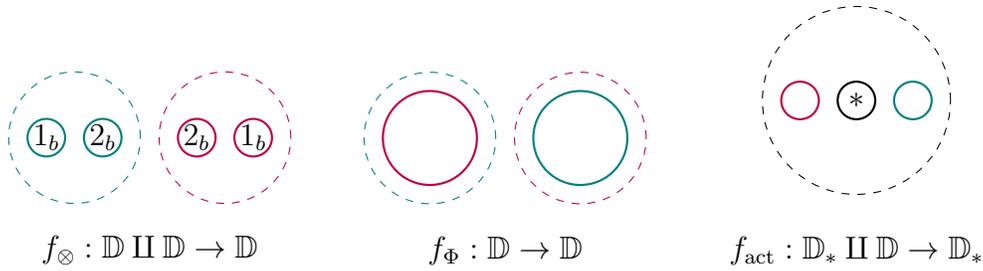
\begin{figure}[h]
	\begin{tikzpicture}[scale=.5]
	\draw [(-),teal,dashed] (1.75,0) circle (1.75);
	\draw [(-),purple,dashed] (5.75,0) circle (1.75);
	\draw [(-),teal,thick] (1,0) circle (.5);
	\draw [(-),teal,thick] (2.5,0) circle (.5);
	\draw [(-),purple,thick] (5,0) circle (.5);
	\draw [(-),purple,thick] (6.5,0) circle (.5);
	\node at (1,0) {$1_b$};
	\node at (2.5,0) {$2_b$};
	\node at (5,0) {$2_b$};
	\node at (6.5,0) {$1_b$};
	\node at (3.75,-3) {$f_\tensor: \DD \amalg \DD \rightarrow \DD$};
	\end{tikzpicture} 
	\quad \quad
	\begin{tikzpicture}[scale=.5]
	\draw [(-),teal,dashed] (1.75,0) circle (1.75);
	\draw [(-),purple,dashed] (5.75,0) circle (1.75);
	\draw [(-),purple,thick] (1.75,0) circle (1.25);
	\draw [(-),teal,thick] (5.75,0) circle (1.25);
	\node at (3.75,-3) {$f_\Phi: \DD \rightarrow \DD$ };
	\end{tikzpicture}  
	\quad \quad
	\begin{tikzpicture}[scale=.5]
	\draw [(-),dashed] (2.5,0) circle (2.5);
	\draw [(-),purple,thick] (1,0) circle (.5);
	\draw [(-),black,thick] (2.5,0) circle (.5);
	\draw [(-),teal,thick] (4,0) circle (.5);
	\node at (2.5,0) {$\ast$};
	\node at (2.5,-4	) {$f_{\act}: \DD_* \amalg \DD \rightarrow \DD_*$};
	\end{tikzpicture} 
	\caption{In this figure operations in the $\ztd_2$-operad are fixed by their visualisations.}
	\label{table:ztd2op}
\end{figure}

\begin{thm} \label{thm:2rs}
	To a $\Z_2$-braided tensor pair $(\catA,\catM)$ there is an associated $\ztd_2$-algebra $F_{(\catA,\catM)}$ in $\Rex$.
\end{thm}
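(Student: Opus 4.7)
The strategy is to mirror the proof of Theorem \ref{thm:1rs}, but using the braided coherence Theorem \ref{braidcoh} in place of Theorem \ref{moncoh}, and using Lemma \ref{lem:braidassign} (together with Remark \ref{rmk:braidassign}) to convert isotopies into braids whose images under the $\Z_2$-braided pair give structural isomorphisms. Set $F := F_{(\catA,\catM)}$ with $F(\DD) = \catA$ and $F(\DD_*) = \catM$.

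First I would assign functors to operations. Given an operation $f$ in $\ztd_2(\DD^{\amalg k}, \DD)$, apply Proposition \ref{htype} and Definition \ref{assignmentdef} to extract a tuple $(\ep_1, \dots, \ep_k) \in \{0,1\}^{\times k}$ and a permutation $\sigma \in S_k$, and then define $F(f)$ by the same formula \eqref{rass1} used in the proof of Theorem \ref{thm:1rs}, namely $\tensor_k \circ \sigma \circ (\Phi^{\ep_1} \boxt \cdots \boxt \Phi^{\ep_k})$. Analogous formulas \eqref{rass2} and \eqref{rass3} handle operations with codomain $\DD_*$, using the action $\act$ and pointing $\1_\catM$. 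This satisfies condition $(i)$ of Definition-Proposition \ref{rexalgdef}.

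Next I would assign natural isomorphisms. Given an isotopy $\alpha$ between operations $g \circ (\sigma_f \circ \amalg_j f_j)$ and $h$, use Lemma \ref{lem:braidassign} and Remark \ref{rmk:braidassign} to extract a (cylinder) braid $[\alpha]$. As in the proof of Theorem \ref{thm:1rs}, the tuples and permutations attached to $h$ and to the composite must agree modulo 2 (since both lie in the same space of operations after choosing a basepoint for the braid), so the source and target of $F(\alpha)$ compose the same objects in the same order with the same pattern of $\Phi$'s. Hence $F(g) \circ (\sigma_f \circ (\boxt_j F(f_j)))$ and $F(h)$ can be connected by a structural isomorphism in the $\Z_2$-braided pair $(\catA, \catM)$ obtained by interpreting each generator of $[\alpha]$ as either an instance of $\sigma$ or of $\kappa$ (using $\Phi_2$, $t$, $\Phi_0$, $\alpha$, $\lambda$, $\rho$, $a$, $r$ and their inverses to handle the monoidal/module structure). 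Define $F(\alpha)$ to be this structural isomorphism; by Theorem \ref{braidcoh} combined with Remark \ref{klincoh}, any two choices with the same underlying braid agree, so this is well-defined.

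Finally, I would verify conditions $(ii)$ and $(iii)$. Condition $(ii)$ is immediate since a constant isotopy has trivial underlying braid, so the associated structural isomorphism is the identity. For condition $(iii)$, a 2-isotopy between two composite isotopies filling a square produces two structural isomorphisms between the same source and target whose underlying braids agree (by Remark \ref{rmk:braidassign}). Theorem \ref{braidcoh} then forces them to be equal, which is exactly the required Equation \eqref{2isoeq}. The main obstacle is the bookkeeping required to check that the braid extracted from a composite isotopy $\beta * (\gamma_1 \amalg \cdots \amalg \gamma_k)$ is the composite of the corresponding braids in the cylinder braid group, so that the assignment respects horizontal and vertical composition; this follows from the compatibility statement $[\gamma_1 * \gamma_2] = [\gamma_1][\gamma_2]$ in Lemma \ref{lem:braidassign} applied componentwise, together with the axioms \eqref{eq:bcyl1}--\eqref{eq:bcyl2} which, via Lemma \ref{relem}, encode the Yang--Baxter and twisted reflection equations satisfied by $\sigma$ and $\kappa$.
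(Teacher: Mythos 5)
Your proof follows essentially the same approach as the paper's: assign functors via the tuple-and-permutation formulae \eqref{rass1}--\eqref{rass3}, assign structural isomorphisms to isotopies by matching the cylinder braid extracted via Lemma \ref{lem:braidassign} and Remark \ref{rmk:braidassign}, invoke Theorem \ref{braidcoh} (with Remark \ref{klincoh}) for uniqueness and to deduce conditions (ii) and (iii). The additional remarks you make about compatibility of the braid extraction with horizontal/vertical composition are a reasonable amplification of what the paper leaves implicit, but the core argument is identical.
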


\begin{proof}
	We will abbreviate $F_{(\catA,\catM)}$ by $F$.
	Via Definition \ref{assignmentdef} we can assign a permutation $\sigma \in S_k$ and a tuple $(\ep_1,\dots,\ep_k) \in \{0,1\}^{\times k}$ to an operation $f$.
	We then assign functors $F(f)$ to operations via the formulae in Equations \ref{rass1} - \ref{rass3}. 
	
	Next we assign natural isomorphisms to isotopies. 
	To an isotopy $\gamma$ between operations $g \circ (\sigma_f \circ (\amalg f_j))$ and $h$ we will assign a structural isomorphisms between the assigned functors $F(g) \circ (\sigma \circ (\boxt F(f_j))$ and $F(h)$, as we did in the proof of Theorem \ref{thm:1rs}. 
	Via Lemma \ref{lem:braidassign} and Remark \ref{rmk:braidassign} we have an associated braid $[\gamma]$ to $\gamma$.
	Recall that by Theorem \ref{braidcoh} there is at most one structural isomorphism  between $F(g) \circ (\sigma \circ (\boxt F(f_j))$ and $F(h)$ that has underlying braid $[\gamma]$.
	Hence to specify the assignment it suffices to show that there exists such a structural isomorphism. 
	Existence is an easy case-by-case analysis similar to the one in Theorem \ref{thm:1rs} that we leave to the reader. 
	
	Note that our assignment of natural isomorphisms satisfies Definition-Proposition \ref{rexalgdef} $(ii)$. 
	It remains to check condition $(iii)$. Consider a diagram of isotopies and operations as in condition $(iii)$ together with a 2-isotopy. 
	The two sides of Equation \eqref{2isoeq} are  structural isomorphisms by construction of our assignments and by definition of structural isomorphism.
	Moreover, the assigned braids of both sides are equal by Lemma \ref{lem:braidassign} and Remark \ref{rmk:braidassign}. 
	Hence, Equation \eqref{2isoeq} holds by Theorem \ref{braidcoh} combined with Remark \ref{klincoh}.
\end{proof}

\subsection{$\Z_2$-symmetric pairs versus $\ztd_n$-algebras}

\begin{defn}
	A $\Z_2$-braided tensor pair $(\catA,\catM)$ is called a \emph{$\Z_2$-symmetric tensor pair} if
	\begin{align*}
	&\sigma_{X,Y} \circ \sigma_{Y,X} = \id_{X\tensor Y},\\
	&\kappa_{M,\Phi(X)} \circ \kappa_{M,X} = \id_{M \tensor X},
	\end{align*}
	for all $M\in \catM, X, Y \in \catA$ 
\end{defn}

\begin{prop} \label{prop:nrn} Let $n\geq 3$.
	To a $\ztd_n$-algebra $F$ in $\Rex$ there is an associated $\Z_2$-symmetric tensor pair $(\catA_F,\catM_F)$.
\end{prop}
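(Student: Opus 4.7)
The plan is to build the underlying $\Z_2$-braided tensor pair by repeating the argument of Proposition \ref{prop:2rn} verbatim, and then to derive the two additional symmetry axioms $\sigma^2 = \id$ and $\kappa^2 = \id$ from the increased connectivity of the relevant orbifold configuration spaces in dimension $n \geq 3$.

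First I would set $\catA_F := \catA$, $\catM_F := \catM$ and fix rectilinear $\Z_2$-equivariant analogues in $\ztd_n$ of the operations $f_\tensor$, $f_\Phi$, $f_\act$, $f_\1$, $f_{\1_\catM}$ of Figure \ref{table:ztd2op}, together with analogues of the isotopies used in the proof of Proposition \ref{prop:2rn} to define $\alpha, \lambda, \rho, \sigma, \Phi_2, \Phi_0, t, r, a, \kappa$. Each 2-isotopy that was used in the $n=2$ proof to verify an axiom of a $\Z_2$-braided pair persists in $\ztd_n$, since one can include it along the standard embedding $\R^2 \hookrightarrow \R^n$. Hence the same line of argument shows that $(\catA_F, \catM_F)$ is a $\Z_2$-braided tensor pair.

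For $\sigma^2 = \id$, recall $\sigma = F(i_\sigma)$ where $i_\sigma$ is a half-rotation isotopy in $\ztd_n(\DD \amalg \DD, \DD)$. The composite $i_\sigma * i_\sigma$ is a loop in this space. By Proposition \ref{htype} and the argument of Lemma \ref{kpi1}, the space $\ztd_n(\DD \amalg \DD, \DD)$ is homotopy equivalent to $F_2[\DD/\Z_2] \simeq F_2(D^n) \simeq F_2(\R^n)$. Since $F_2(\R^n)$ deformation retracts onto $S^{n-1}$ and is thus simply connected for $n \geq 3$, the loop $i_\sigma * i_\sigma$ is 2-isotopic to the constant isotopy at $f_\tensor$. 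Lemma \ref{pseudofun} parts 2--4 then yield $\sigma^2 = F(i_\sigma * i_\sigma) = \id$.

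The case of $\kappa^2$ is the main subtlety. The isotopy $i_\kappa$ lives in $\ztd_n(\DD \amalg \DD_*, \DD_*) \simeq F_1[\DD_*/\Z_2]$; the argument of Lemma \ref{kpi1} identifies this with $(D^n \setminus \{\mathbf{0}\})/\Z_2$, which deformation retracts onto $\R P^{n-1}$. For $n \geq 3$ we have $\pi_1(\R P^{n-1}) \cong \Z_2$, so $i_\kappa * i_\kappa$ is 2-isotopic to a constant isotopy. The complication is that the endpoint of $i_\kappa$ differs from its starting point by a $\Phi$-twist on the second slot, so $i_\kappa * i_\kappa$ is a priori a path from the $\id$-configuration to the $\Phi^2$-configuration rather than a genuine loop. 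To close it one composes with the canonical isotopy witnessing $\Phi^2 \simeq \id$, whose image under $F$ is the natural isomorphism $t$; assembling the resulting prism of natural isomorphisms via Lemma \ref{pseudofun} parts 5--7 yields $\kappa_{M,\Phi(X)} \circ \kappa_{M,X} = \id$, interpreted modulo $t: \Phi^2 \cong \id_\catA$. The expected main obstacle is precisely this bookkeeping of $t$ inside the 2-isotopy, but no idea beyond the coherence results of Section \ref{sec:coherence} is required.
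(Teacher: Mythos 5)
Your proposal follows the same line as the paper's (very brief) proof: import the $\Z_2$-braided pair structure from dimension $2$ and then argue from the connectivity of the operation spaces that the additional symmetry constraints hold. The extra detail you supply about where the $2$-isotopies come from is what the paper leaves implicit, so the overall approach is correct.

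One small slip, though: the operation space $\ztd_n(\DD \amalg \DD_*, \DD_*)$ is homotopy equivalent, by Proposition \ref{htype}, to the \emph{ordered} orbifold configuration space $F_1[\DD_*/\Z_2] = D^n \setminus \{\mathbf{0}\} \simeq S^{n-1}$, not to the unordered quotient $C_1[\DD_*/\Z_2] = (D^n\setminus\{\mathbf{0}\})/\Z_2 \simeq \R P^{n-1}$. Lemma \ref{kpi1} is a statement about $C_k$, whereas the maps in Proposition \ref{htype} land in $F_k$ because the injection records the image of $\mathbf{0}_b$, i.e.\ the blue disk's centre, which picks out a preferred point in each $\Z_2$-orbit. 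In $S^{n-1}$ the isotopy $i_\kappa$ is a path from $p$ to (roughly) $-p$, \emph{not} a loop, so you cannot conclude $i_\kappa * i_\kappa \simeq \id$ by ``squaring an element of $\pi_1 = \Z_2$''; instead, the concatenation $\bigl(i_\kappa * (\id_{\id_{\DD_*}} \amalg \id_{f_\Phi})\bigr) \circ i_\kappa$ followed by the closing isotopy witnessing $\Phi^2 \simeq \id$ yields a genuine loop in $S^{n-1}$, and for $n\geq 3$ that loop is nullhomotopic because $S^{n-1}$ is simply connected. This correction strengthens your argument rather than breaking it, and the remainder — in particular your observation that the endpoint mismatch has to be closed by $t: \Phi^2 \cong \id_\catA$, a point the paper glosses over entirely — is right.
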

\begin{proof}
	As the proof is very similar to Proposition \ref{prop:2rn} we will be brief.
	The $n$-dimensional analogues of the operations, isotopies and 2-isotopies in the proof of Proposition  \ref{prop:2rn} define the structure of a $\Z_2$-braided pair on $\catA_F := \catA$ and $\catM_F:= \catM$. 
	Denote the isotopies $i_\sigma$ and $i_\kappa$ that are mapped by $F$ to the natural isomorphisms $\sigma$ and $\kappa$. 
	For $n\geq 3$ there now exist 2-isotopies 
	\[ i_\sigma \circ i_\sigma \simeq \id_{f_\tensor}, \quad \quad i_\kappa \circ i_\kappa \simeq \id_{f_{\act}}\]
	in $\ztd_n$ enforcing the equations $\sigma^2 = \id$ and $\kappa^2 = \id$. 
\end{proof}

\begin{thm} \label{thm:nrs}  Let $n\geq 3$.
	To a $\Z_2$-symmetric tensor pair $(\catA,\catM)$ there is an associated $\ztd_n$-algebra $F_{(\catA,\catM)}$ in $\Rex$.
\end{thm}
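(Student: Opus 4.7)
The proof will closely follow the templates of Theorems \ref{thm:1rs} and \ref{thm:2rs}, with the crucial simplification that for $n\geq 3$ we can appeal to the symmetric coherence result (Theorem \ref{symcoh}) instead of the braided one. The plan is to first assign functors to operations by the same formulae used in the lower-dimensional cases, then assign natural isomorphisms to isotopies by invoking symmetric coherence for both existence and uniqueness, and finally verify the 2-isotopy axiom of Definition-Proposition \ref{rexalgdef}(iii).

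First, I would extend Definition \ref{assignmentdef} verbatim to $n \geq 3$ to produce, for any operation $f$ with configuration in $F_k[\DD/\Z_2]$ or $F_k[\DD_*/\Z_2]$, an assigned tuple $(\ep_1,\dots,\ep_k) \in \{0,1\}^{\times k}$ and permutation $\sigma \in S_k$. Using the $\Z_2$-symmetric tensor pair $(\catA,\catM)$, I then define the assigned functors $F(f)$ exactly as in Equations \eqref{rass1}, \eqref{rass2}, \eqref{rass3}. Condition $(i)$ of Definition-Proposition \ref{rexalgdef} is immediate since identity operations have trivial permutation and trivial $\ep$-tuple.

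Next, given an isotopy $\gamma$ from $g \circ (\sigma_f \circ \amalg_j f_j)$ to $h$, I need a natural isomorphism between the corresponding composites of assigned functors. By a case analysis identical to the one at the end of the proof of Theorem \ref{thm:1rs}, the domain and codomain multiply the input objects in the same order with the same distribution of $\Phi$'s modulo 2, so \emph{some} structural isomorphism exists, built by repeated applications of $\alpha, \lambda, \rho, \Phi_2, \Phi_0, t, a, r, \sigma, \kappa$ and their inverses. By Theorem \ref{symcoh} combined with Remark \ref{klincoh}, any two such parallel structural isomorphisms coincide, so the assignment $F(\gamma)$ is unambiguously defined. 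Condition $(ii)$ follows since the identity is a valid structural isomorphism for a constant isotopy.

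Finally, to verify condition $(iii)$, consider any diagram of operations and isotopies as in Definition-Proposition \ref{rexalgdef} equipped with a 2-isotopy. Both sides of Equation \eqref{2isoeq} are structural isomorphisms between the same pair of functors by construction, so they are equal by Theorem \ref{symcoh}. I do not need to track braids as in the $n=2$ proof, which is precisely the stabilisation phenomenon: once the operadic dimension is at least $3$, the disks can be rotated past each other and interchanged freely, and the $2$-isotopies in $\ztd_n$ for $n\geq 3$ enforce both $\sigma^2 = \id$ and $\kappa^2 = \id$ (cf.\ the proof of Proposition \ref{prop:nrn}), which is exactly the additional content built into the notion of $\Z_2$-symmetric tensor pair. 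I expect no substantial obstacle beyond carefully checking that the existence argument for a structural isomorphism goes through in each of the three types of operations, which is entirely analogous to the lower-dimensional cases and can be left to the reader.
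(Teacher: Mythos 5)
Your proposal is correct and follows exactly the approach the paper takes: assign functors via Equations \eqref{rass1}--\eqref{rass3} using Definition \ref{assignmentdef}, and then invoke Theorem \ref{symcoh} together with Remark \ref{klincoh} to get both the well-definedness of the assignment on isotopies and the verification of condition (iii). Your elaboration on why braid-tracking is unnecessary (stabilisation) is sound and just spells out what the paper leaves implicit.
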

\begin{proof}
	As the proof is very similar to Theorems \ref{thm:1rs} and \ref{thm:2rs} we will be brief.
	Via Definition \ref{assignmentdef} we can assign a permutation $\sigma \in S_k$ and a tuple $(\ep_1,\dots,\ep_k) \in \{0,1\}^{\times k}$ to an operation $f$.
	We then assign functors $F(f)$ to operations via the formulae in Equations \eqref{rass1} - \eqref{rass3}. 
	For an isotopy $\gamma$ between operations $g \circ (\sigma_f \circ (\amalg f_j))$ and $h$ it is not hard to show that there exists a structural isomorphism between the assigned functors $F(g) \circ (\sigma \circ (\boxt F(f_j))$ and $F(h)$.
	Theorem \ref{symcoh} combined with Remark \ref{klincoh} then imply this assignment is unique, and satisfies Definition-Proposition \ref{rexalgdef}.
\end{proof}

\section{General algebras} \label{sec:genalg}
To define algebras in the $(2,1)$-category $\Rex$ over the topological operad $\ztd_n$ we employ the language of $(\infty,1)$-categories.
Our model of $(\infty,1)$-categories is given by \emph{quasi-categories}.\footnote{Simplicial sets satisfying the weak Kan condition of Boardman-Vogt.} 
Our references are \cite{lhtt} and \cite{lha}.

\subsection{Higher categories and higher operads}
Before recalling the definition of $\infty$-operads we first adopt a different perspective on topogical operads.

\begin{nota} Let $\Fin_*$ denoted the skeletal category of finite pointed sets.\footnote{$\Fin_*$ has objects $[k]$ for $k\in \Z_{\geq 0}$ and a morphism $f:[k] \rightarrow [m]$ is map $f:\{0,\dots,k\} \rightarrow \{0,\dots,m\}$ that sends 0 tot 0.}
\end{nota}
\begin{defn} \cite[2.1.1.7]{lha}
	Let $\catO$ be a topological coloured operad with operation spaces $\catO(c_1 \tensor \dots \tensor c_k,c)$ for colours $c,c_i$. The topological category $\catO^{\tensor}$ has as objects finite sequences of colours of $\catO$ and a morphism $\{c_1,\dots,c_k\} \rightarrow \{d_1,\dots,d_m\}$ is a collection 
	\[ f : [k] \rightarrow [m], \quad \quad f_i \in \catO \big( \bigotimes_{j: f(j)=i}c_j,d_i \big) \text{ for } 1 \leq i \leq k,\]
	where $f$ is a morphism in $\Fin_*$.\footnote{The composition of morphisms in $\catO^{\tensor}$ is induced from the operadic composition in $\catO$. The morphism spaces in $\catO^{\tensor}$ are canonically unions of products of operation spaces of $\catO$.}
\end{defn}
\begin{rmk}
	The category $\catO^{\tensor}$ has a canonical forgetful functor $\catO^{\tensor} \rightarrow \Fin_*$ which one can use to reconstruct the operad $\catO$ out of $\catO^{\tensor}$. 
\end{rmk}

One obtains the associated $\infty$-operad by taking the \emph{(homotopy) coherent nerve} $\nerve$ of $\catO^{\tensor}$.\footnote{See e.g. \cite[\S1.1.5]{lhtt} for the definition of the coherent nerve (also known as the \emph{simplicial nerve}).} 
To apply the homotopy coherent nerve to a topological category one needs to change enrichment to simplicial sets via the singular chains functor.
\begin{conv}
	We will change enrichment from topological to simplicial without comment.
\end{conv}
It is worthwhile to remind ourselves of some of the low-dimensional simplices of $\nerve \catC$. 
\begin{ex} \label{snerve}
	Let $\catC$  be a simplicially enriched category, and denote $\circ$ (resp. $*$) the horizontal composition in $\catC$ of vertices (resp. edges). The coherent nerve $\nerve \catC$ is a simplicial set with vertices being the objects of $\catC$, 1-simplices being the morphisms of $\catC$, 2-simplices being diagrams of the form 
	\adjustbox{scale=.75}{
		\begin{tikzcd}[row sep=small,column sep=large]
			& \bullet \arrow[dd, "g"]\\
			\bullet \arrow[ur,"f"] \arrow[dr, "h"{name=U, below}]{} & \\
			& \bullet  \arrow[Rightarrow, from=U,uu, start anchor={[xshift=1ex,yshift=1ex]},
			end anchor={[xshift=-1ex,yshift=-1ex]}]
		\end{tikzcd}  
	}
	and a 3-simplex consists of 
	\begin{align*}
	&\text{four objects} & & X_0,\dots,X_3 \in \catC  ,\\
	&\text{six morphisms/vertices} & &\{\; f_{ij}: X_i \rightarrow X_j \in \catC(X_i,X_j)_0 \; \}_{0\leq i < j \leq3}\\
	&\text{four 2-morphisms/edges} & & \{\; \alpha_{ijk}: f_{jk} \circ f_{ij} \simeq f_{il} \in \catC(X_i,X_k)_1 \; \}_{0\leq i < j < k \leq 3}\\
	&\text{an extra 2-morphism/edge} & &\alpha_{0123}: f_{03} \simeq f_{23} \circ f_{12} \circ f_{01} \in \catC(X_0,X_3)_1  \\
	&\text{and two 3-morphisms/2-simplices} &  &\text{in } \catC(X_0,X_3)_2 \text{ with boundaries}
	\end{align*}
	\begin{center}
		\begin{tikzcd} 
			& \bullet \\ 
			\bullet \arrow[ru, "\alpha_{0123}"] \arrow[r,swap,"\alpha_{023}"] & \bullet \arrow[u,swap,"\id_{f_{23}} * \alpha_{012}"]  
		\end{tikzcd} 
		\text{ and } 
		\begin{tikzcd} 
			& \bullet \\ 
			\bullet \arrow[ru, "\alpha_{0123}"] \arrow[r,swap,"\alpha_{013}"] & \bullet \arrow[u,swap,"\alpha_{123}* f_{01}"]  
		\end{tikzcd} 
	\end{center}
\end{ex} 

\begin{nota}
	We denote $\nerve \catO^{\tensor}$ as $N^\tensor(\catO)$ and call it \emph{the operadic nerve} of $\catO$ \cite[2.1.1.23]{lha}.
\end{nota}
\begin{rmk}
	$\nerve^{\tensor} (\catO)$ is an $\infty$-category with a canonical functor $q: \nerve^{\tensor} (\catO) \rightarrow \nerve \Fin_*$.\footnote{$\Fin_*$ is a discrete topological category so that the coherent nerve of $\Fin_*$ agrees with the usual nerve.}
\end{rmk}

To describe the special properties of the map $q$ we recall some further definitions. 
A map $f: [n] \rightarrow [m]$ is called \emph{inert} if for all $0< i \leq m$ there is a unique $j \in [n]$ such that $f(j)=i$. 
\begin{ex}
	For given $k$ and $1\leq i \leq n$ we denote by $\rho^i: [k] \rightarrow [1]$ the inert map in $\Fin_*$ that sends $i$ to $1$ and the rest to $0$.
\end{ex}
A morphism $f: X\rightarrow Y$ in $\catO^{\tensor}$ is \emph{$q$-coCartesian} if and only for every $Z\in \catO^{\tensor}$ the diagram
\begin{center}
	\begin{tikzcd}
		\catO^{\tensor} (Y,Z) \arrow[d,"q"] \arrow[r,"f^*"] & \catO^{\tensor}(X,Z) \arrow[d,"q"]\\
		\Fin_* (q(Y),q(Z)) \arrow[r,"q(f)^*"] & \Fin_* (q(X),q(Z))
	\end{tikzcd}
\end{center} 
is a homotopy pullback.\footnote{This is equivalent to the usual Definition of $q$-coCartesian of \cite{lhtt} by Proposition \cite[2.4.1.10]{lhtt}.}

\begin{ex} \label{ex:cocart}
	A morphism $(f,f_i)$ in $\nerve^{\tensor}(\ztd_n)$ lying over an inert morphism $f$ is $q$-coCartesian if and only if each $f_i$ is isotopic to either $\id_\DD$ or $\id_{\DD_*}$.
\end{ex}

\begin{nota}
	For $f: [m] \rightarrow [n]$ denote $\Map_{\catO^{\tensor}}^{f}(X,Y)$ the union of connected components of $\Map_{\catO^{\tensor}}(X,Y)$ lying over $f$. 
\end{nota}

\begin{nota}
	We denote with $\catO^{\tensor}_{[n]}$ the subcategory of $\catO^{\tensor}$ consisting of lists of length $n$ and morphisms lying over $\id_{[n]}$. 
\end{nota}

\begin{prop} \cite[2.1.1.27]{lha} \label{infoperad}
	For any topological operad $\catO$ the functor $q: \nerve^{\tensor} (\catO) \rightarrow \nerve \Fin_*$ defines an $\infty$-operad i.e.
	\begin{enumerate}
		\item For any $f: [m] \rightarrow [k]$ and $X \in \catO^{\tensor}_{[m]}$ there is an object $Y \in \catO^{\tensor}_{[k]}$ and $q$-coCartesian morphism $f_!: X \rightarrow Y$ lying over $f$.
		\item Let $X \in \catO^{\tensor}_{[m]}$, $Y \in \catO^{\tensor}_{[k]}$ and choose $q$-coCartesian morphism $\rho^i_!:Y \rightarrow Y_i$ lying over the $\rho_i: [k] \rightarrow [1]$. Then the induced map 
		\[\Map_{\catO^{\tensor}}^f(X,Y) \rightarrow \prod_{1\leq i \leq k} \Map_{\catO^{\tensor}}^{\rho^i \circ f}(X,Y_i) \]
		is a homotopy equivalence. 
		\item For any $X_1,\dots,X_k \in \catO^{\tensor}$ there is a $X \in \catO^{\tensor}_{[k]}$ and $q$-coCartesian morphisms $\rho^i_!:X \rightarrow X_i$ lying over $\rho^i$.
	\end{enumerate}
\end{prop}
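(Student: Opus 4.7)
The plan is to verify each of the three axioms by exploiting the explicit description of morphisms in $\catO^{\tensor}$ as tuples of operations in $\catO$, together with the characterization of $q$-coCartesian morphisms in terms of homotopy pullback squares of mapping spaces already given above.

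First I would prove axiom (1) by constructing the coCartesian lifts explicitly. Given $X = (c_1, \dots, c_m) \in \catO^{\tensor}_{[m]}$ and $f: [m] \to [k]$ in $\Fin_*$, define the target object $Y = (d_1, \dots, d_k) \in \catO^{\tensor}_{[k]}$ where $d_i := \bigotimes_{j: f(j) = i} c_j$, and let $f_!: X \to Y$ be the morphism over $f$ whose $i$-th component is the identity operation $\id_{d_i} \in \catO(\bigotimes_{j: f(j)=i} c_j, d_i)$. To see that $f_!$ is $q$-coCartesian, I would fix any $Z = (e_1, \dots, e_\ell) \in \catO^{\tensor}$ and any $g: [k] \to [\ell]$ in $\Fin_*$, then identify $\Map_{\catO^{\tensor}}^{g}(Y, Z)$ as the product $\prod_{i=1}^{\ell} \catO\bigl(\bigotimes_{j: g(j)=i} d_j, e_i\bigr)$ and $\Map_{\catO^{\tensor}}^{g \circ f}(X, Z)$ analogously as a product of operation spaces over the fibers of $g \circ f$. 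The map $(f_!)^*$ is then, on each factor, the identity of the same operation space (since $d_i$ was defined precisely as the tensor product of the $c_j$ with $f(j) = i$). Hence the diagram of mapping spaces defining coCartesianness is a pullback on the nose.

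Axiom (3) follows immediately from axiom (1): given any $X = (c_1, \dots, c_k) \in \catO^{\tensor}_{[k]}$, apply the construction above to the inert morphism $\rho^i: [k] \to [1]$ to obtain a $q$-coCartesian lift $\rho^i_!: X \to X_i$ where $X_i = c_i$ viewed as a length-one sequence. Here I would also note that for an inert morphism, the condition that each component map be an identity operation recovers Example \ref{ex:cocart}, confirming consistency with the general characterization.

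Axiom (2) is the Segal-type condition, and this is where I expect the main technical work. The key point is that by the very construction of $\catO^{\tensor}$, for any $X \in \catO^{\tensor}_{[m]}$, $Y \in \catO^{\tensor}_{[k]}$, and $f: [m] \to [k]$, the mapping space $\Map_{\catO^{\tensor}}^{f}(X, Y)$ is already a strict product $\prod_{1 \leq i \leq k} \catO\bigl(\bigotimes_{j: f(j)=i} c_j, d_i\bigr)$ of operation spaces of $\catO$, indexed by the fibers of $f$. Meanwhile, the factors $\Map_{\catO^{\tensor}}^{\rho^i \circ f}(X, Y_i)$ are each exactly $\catO\bigl(\bigotimes_{j: f(j)=i} c_j, d_i\bigr)$, because a coCartesian lift $\rho^i_!: Y \to Y_i$ over $\rho^i$ has $Y_i = d_i$. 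The comparison map is therefore the identity, hence a (trivial) homotopy equivalence. The main obstacle is that these identifications of mapping spaces take place in the topologically enriched category $\catO^{\tensor}$, and I need to transfer them to the homotopy-coherent nerve. This is handled by the general fact (cf.\ \cite[1.1.5.10]{lhtt}) that the mapping spaces in $\nerve\catC$ are weakly equivalent to the simplicial mapping spaces of $\catC$; combined with the observation that products of Kan complexes are preserved under this passage, the Segal condition descends to $\nerve^{\tensor}(\catO)$ as required.
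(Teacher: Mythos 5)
The paper does not prove this proposition; it cites Lurie \cite[2.1.1.27]{lha} for it, so there is no in-paper argument to compare your proposal against. Taken on its own terms, your direct verification is reasonable in outline, but axiom (1) contains a genuine gap.

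You define $Y=(d_1,\dots,d_k)$ by $d_i:=\bigotimes_{j:f(j)=i}c_j$ and take the $i$-th component of $f_!$ to be $\id_{d_i}$. In a general coloured operad there is no colour corresponding to a tensor product of several colours: the expression $\bigotimes_{j:f(j)=i}c_j$ appearing in the definition of $\catO^{\tensor}$ is only notation for the multi-input $(c_{j_1},\dots,c_{j_\ell})$ of an operation, not an object of the operad, and there is no identity operation whose source is such a multi-input. Your construction of $Y$ and of $f_!$ therefore makes sense only when $f$ is \emph{inert}, so that each fibre $f^{-1}(i)$ with $i>0$ is a singleton and $d_i$ is one of the original colours. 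This is precisely the restriction in Lurie's $\infty$-operad axioms: coCartesian lifts are required only over inert morphisms (were they to exist over arbitrary $f$, $q$ would be a coCartesian fibration and $\nerve^{\tensor}(\catO)$ a symmetric monoidal $\infty$-category, which is false for a general coloured operad). The paper's phrasing ``for any $f$'' in item (1) carries the same imprecision, and you inherited it without flagging it; an argument that only works for inert $f$ cannot support the stronger statement as written. Your treatment of axiom (3) only invokes the construction for the inert maps $\rho^i$, so it is fine, and your Segal argument for axiom (2) --- identifying $\Map^f_{\catO^{\tensor}}(X,Y)$ as a strict product of operation spaces indexed over the fibres of $f$ and then invoking \cite[1.1.5.10]{lhtt} to transport the equivalence across the coherent nerve --- is sound, provided you also record that the simplicial mapping spaces are Kan (automatic since one passes through $\mathrm{Sing}$). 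With axiom (1) restricted to inert morphisms, the proof goes through.
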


\begin{defn}  \label{genalgdef} 
	Let $p:\catO^{\tensor} \rightarrow \nerve \Fin_*$ be an $\infty$-operad. 
	A \emph{$\ztd_n$-algebra in $\catO^{\tensor}$} is a map of $\infty$-operads from $\ztd_n$ to $\catO^{\tensor}$ i.e. a functor $F: \nerve^{\tensor}(\ztd_n)  \rightarrow \catO^{\tensor}$ such that $p \circ F = q$ and $F$ sends $q$-coCartesian morphisms lying over inert morphisms to $p$-coCartesian morphisms.
\end{defn}

\begin{rmk}
	A symmetric monoidal $\infty$-category $\catC^{\tensor} \rightarrow \nerve \Fin_*$ is a special kind of $\infty$-operad \cite[2.0.0.7]{lha}, so that Definition \ref{genalgdef} provides a notion of $\ztd_n$-algebra in $\catC^{\tensor}$.
\end{rmk}

\begin{rmk}
	If $\catO^{\tensor} = N^{\tensor}(\catO)$ for some topological operad $\catO$ then a $\ztd_n$-algebra in $\catO^{\tensor}$ corresponds to a homotopy coherent algebra in $\catO$ over $\ztd_n$.  
\end{rmk}

\begin{defn} Let $\catO^{\tensor}$ be an $\infty$-operad.
	The $\infty$-category of $\ztd_n$-algebras in $\catO$, denoted $\mathrm{Alg}_{\ztd_n}(\catO)$, is the full subcategory of
	\[ \mathrm{Fun}(\nerve^{\tensor}(\ztd_n),\catO^{\tensor})\]
	with objects the $\infty$-operad maps.\footnote{Recall that $\mathrm{Fun}(\nerve^{\tensor}(\ztd_n),\catO^{\tensor})$ is just the inner hom of simplicial sets.}
\end{defn}

\subsection{Categorical algebras revisited}
Recall that $\Rex$ has natural isomorphisms as 2-morphisms. 
Hence $\Rex$ is a $(2,1)$-category i.e. a category enriched over groupoids.
We can view $\Rex$ as a simplicially enriched category by base-changing enrichment along the nerve functor $\nerve: \textrm{Grpds} \rightarrow \textrm{sSet}$.
\begin{defn}
	The $\infty$-operad $p: \nerve^{\tensor}(\Rex) \rightarrow \nerve \Fin_*$ is the operadic nerve of the simplicial operad whose colours are the objects of $\Rex$ and which has operations spaces
	\[ \Rex( X_1 \boxtimes \dots \boxtimes X_k,X ) \quad \quad \text{for} \quad X_1, \dots, X_k,X \in \Rex.\footnote{The Deligne-Kelly tensor product can be used to define an operadic structure as it defines a symmetric monoidal structure on the $(2,1)$-category $\Rex$ \cite{kelly89}.}\]	
\end{defn}

Since $\Rex$ is a (2,1)-category the operadic nerve allows a simple description as it is 3-coskeletal. Let us briefly recall this notion.
Denote $\Delta_n \subset \Delta$ the full subcategory generated by $[0],[1],\dots,[n]$. For a simplicial set the restriction to $\Delta_n$ defines a truncation functor 
\[\tr_n: [\Delta^{op},\mathrm{Set}] \rightarrow [\Delta_n^{op},\mathrm{Set}].\]
A simplicial set $X$ is called \emph{n-coskeletal} if for any simplicial set $Y$ the natural map
\[ \Hom_{[\Delta^{op},\mathrm{Set}]} (Y,X) \rightarrow \Hom_{[\Delta^{op}_n,\mathrm{Set}]} (\tr_n Y, \tr_n X)\]
is a bijection.\footnote{In fact, there is an adjunction $\mathrm{cosk}_n: [\Delta_n^{op},\mathrm{Set}] \leftrightharpoons [\Delta^{op},\mathrm{Set}]: \tr_n$ and $n$-coskeletal sets are exactly those for which $X \cong \mathrm{cosk}_n\tr_n X$.}

\begin{defprop} \cite[Proposition 4.1]{bfb04} \label{geometricnerve}
	Let $\catC$ be a 2-category. The coherent nerve $\nerve \catC$ is the 3-coskeletal simplicial set with vertices the objects of $\catC$, 1-simplices the morphisms of $\catC$,  2-simplices the diagrams of the form 
	\adjustbox{scale=.75}{
		\begin{tikzcd}[row sep=small,column sep=large]
			& \bullet \arrow[dd, "g"]\\
			\bullet \arrow[ur,"f"] \arrow[dr, "h"{name=U, below}]{} & \\
			& \bullet  \arrow[Rightarrow, from=U,uu, start anchor={[xshift=1ex,yshift=1ex]},
			end anchor={[xshift=-1ex,yshift=-1ex]}]
		\end{tikzcd}  
	}
	and 3-simplices are `commutative tetrahedra' of the form 
	\begin{center}
		\begin{tikzcd}[column sep = 50pt, row sep= 20pt]
			& \bullet & \\
			& \bullet \arrow[u,"l"] \arrow[rd,swap,"g"] & \\
			\bullet \arrow[uur, "k"{name=X,above}] \arrow[ur, swap,"f"] \arrow[rr,swap,"h"{name=Y,below}] \arrow[Rightarrow,ur, from=X, start anchor={[xshift = .3ex, yshift=-.3ex]}] \arrow[Rightarrow,ur, from=Y, start anchor={[yshift = .6ex]}]& & \bullet \arrow[uul,"m"{name=Z, above}] \arrow[Rightarrow,ul, from=Z, start anchor={[xshift = -.3ex, yshift=-.3ex]}]
		\end{tikzcd}
		$\begin{array}{ l l }
		\alpha: k \Rightarrow mh & \text{(bottom face)}\\
		\beta: h \Rightarrow gf  & \text{(lower face)} \\
		\gamma: k \Rightarrow lf & \text{(left face)}\\
		\delta: l \Rightarrow mg & \text{(right face)}
		\end{array}$
	\end{center}
	where commutativity means that the diagram
	\begin{tikzcd}[scale=.7]
		k \arrow[r,"\alpha"] \arrow[d, "\gamma"] & mh \arrow[d, "\id_m * \beta"] \\
		lf \arrow[r, "\delta * \id_f"] & mgf
	\end{tikzcd}
	commutes.\footnote{This simplicial set is sometimes called the \emph{geometric nerve} of the 2-category and is due to R. Street.}
\end{defprop} 

We will now prove Definition-Proposition \ref{rexalgdef} by showing $\ztd_n$-algebras in $N^{\tensor}(\Rex)$ are exactly determined by the list of data specified there.
\vspace{.5em} \\
\textbf{Proof of Definition-Proposition \ref{rexalgdef}.}
Let $F: \nerve^{\tensor} (\ztd_n ) \rightarrow \nerve^{\tensor} (\Rex)$ be a map of $\infty$-operads. We denote $\catA := F(\DD)$ and $\catM := F(\DD_*)$.
Without loss of generality we may assume $F (X_1,\dots X_k) = (F(X_1),\dots, F(X_k))$. 
By Property (2) in Proposition \ref{infoperad} the functor $F$ is completely determined by its value on $n$-simplices in which the edge between vertices $n-1$ and $n$ is a map lying over some $f:[k] \rightarrow [1]$.
Such a map is exactly an operation in $\ztd_n$. 
The value of $F$ on such 1-, 2-, and 3-simplices are on the nose the assignments of Definition-Proposition \ref{rexalgdef}. The further restrictions on identity operations and trivial isotopies in Definition \ref{rexalgdef} correspond to $F$ commuting with the simplicial maps between 1-,2- and 3-simplices. 
Now recall that the nerve of any 2-category is 3-coskeletal i.e.
\begin{align}
\Fun (\nerve^{\tensor} (\ztd_n), \nerve^{\tensor} (\Rex) \cong \Hom_{\mathrm{sSet}_{\leq 3}} (\tr_3\nerve^{\tensor} (\ztd_n), \tr_3 \nerve^{\tensor} (\Rex). \label{cosk}
\end{align}
Hence by Equation \eqref{cosk} the functor $F$ is completely determined by an assignment as in Definition \ref{rexalgdef}. 
Conversely, we observe that starting from such an assignment the associated functor $F$ is a map of $\infty$-operads. Indeed, this is clear from Example \ref{ex:cocart} since these $q$-coCartesian morphisms are assigned either $\id_\catA$ or $\id_\catM$ which are of course $p$-coCartesian. \\
\qed

We will conclude by showing that the assignments in section \ref{sec:rexalg} are inverse equivalences.
\begin{thm}\label{thm:equivalences}
	\begin{enumerate}
		\item The assignments 
		\begin{align*}
		&F \mapsto (\catA_F,\catM_F) \text{ of Proposition \ref{prop:1rn},}\\ 
		&(\catA,\catM) \mapsto F_{(\catA,\catM)} \text{ of Theorem \ref{thm:1rs}}
		\end{align*}
		are inverse equivalences. 
		\item The assignments 
		\begin{align*}
		&F \mapsto (\catA_F,\catM_F) \text{ of Proposition \ref{prop:2rn},}\\ 
		&(\catA,\catM) \mapsto F_{(\catA,\catM)} \text{ of Theorem \ref{thm:2rs}}
		\end{align*}
		are inverse equivalences. 
		\item The assignments 
		\begin{align*}
		&F \mapsto (\catA_F,\catM_F) \text{ of Proposition \ref{prop:nrn},}\\ 
		&(\catA,\catM) \mapsto F_{(\catA,\catM)} \text{ of Theorem \ref{thm:nrs}}
		\end{align*}
		are inverse equivalences. 
	\end{enumerate}
\end{thm}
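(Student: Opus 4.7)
The plan is to prove each part by constructing a natural equivalence in the $\infty$-category $\mathrm{Alg}_{\ztd_n}(\Rex)$ between any algebra $F$ and the algebra $F_{(\catA_F,\catM_F)}$ reconstructed from its associated pair, and separately checking that the pair extracted from $F_{(\catA,\catM)}$ is $(\catA,\catM)$ on the nose. I will focus on part (2), since parts (1) and (3) are simpler variants in which Theorem \ref{braidcoh} is replaced by Theorems \ref{moncoh} and \ref{symcoh} respectively, and the cylinder braids are replaced by identities or quotiented to the symmetric group.

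For the easier composite, let $(\catA,\catM)$ be a $\Z_2$-braided tensor pair and set $F := F_{(\catA,\catM)}$. By inspection of the formulae \eqref{rass1}--\eqref{rass3} the generating operations $f_\tensor,f_\Phi,f_\act,f_\1,f_{\1_\catM}$ fixed in Figure \ref{table:ztd2op} are assigned the structure functors $\tensor,\Phi,\act,\1,\1_\catM$ of the pair, so applying Proposition \ref{prop:2rn} to $F$ returns the original category data. For the natural isomorphisms, the isotopies $i_\sigma,i_\kappa,i_\alpha,\dots$ selected in Proposition \ref{prop:2rn} have underlying braids matching those of $\sigma,\kappa,\alpha,\dots$ in $(\catA,\catM)$ by construction, so Theorem \ref{braidcoh} combined with Remark \ref{klincoh} forces the extracted structural isomorphisms to agree with the original ones. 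Hence this round trip is the identity on the generating data.

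For the main direction, let $F$ be a $\ztd_2$-algebra in $\Rex$, set $(\catA,\catM) := (\catA_F,\catM_F)$ and $G := F_{(\catA,\catM)}$. I will build an invertible natural transformation $\eta \colon F \Rightarrow G$ of $\infty$-operad maps. Since $\nerve^\tensor(\Rex)$ is $3$-coskeletal by Definition-Proposition \ref{geometricnerve}, it suffices to specify $\eta$ on $\tr_3$. On colours, $F$ and $G$ agree. For each operation $f$ with codomain $\DD$ or $\DD_*$, pick an isotopy $\gamma_f$ from $f$ to the specific composite $\sigma\circ(\amalg_j f_{\Phi^{\ep_j}})$ of generating operations used to define $G(f)$ via \eqref{rass1}--\eqref{rass3}, and set $\eta_f := F(\gamma_f) \colon F(f) \xrightarrow{\;\simeq\;} G(f)$, which is invertible by Lemma \ref{pseudofun}.2. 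This defines $\eta$ on $1$- and $2$-simplices after observing that both $F$ and $G$ send coCartesian maps lying over inert morphisms to identities in $\Rex$.

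The essential content is the check on $2$- and $3$-simplices: one must verify that the square obtained from any $2$-simplex of $\nerve^\tensor(\ztd_2)$ and the tetrahedral coherence of any $3$-simplex commute when decorated with $\eta$. Each such check reduces, after unfolding the formulae for $G$ and using Lemma \ref{pseudofun}, to an equality of two parallel structural isomorphisms in $(\catA,\catM)$. By Lemma \ref{lem:braidassign} and Remark \ref{rmk:braidassign}, the underlying cylinder braid of $F(\gamma)$ equals $[\gamma]$ for every isotopy $\gamma$, and by construction the structural isomorphism produced on the $G$-side in Theorem \ref{thm:2rs} was precisely the unique one with matching underlying braid. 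Hence both sides have equal underlying braids and Theorem \ref{braidcoh} combined with Remark \ref{klincoh} yields the required equality, so $\eta$ is well defined and gives the desired equivalence.

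The hard part will be the compatibility of the braid assignment $[\cdot]$ of Lemma \ref{lem:braidassign} with horizontal and vertical composition of isotopies at the level of $3$-simplices: one must show that the braid read off from either composite in a tetrahedron of isotopies agrees, so that coherence can actually be invoked. This reduces to a careful bookkeeping of basepoint paths $e_0,e_1$ chosen in the proof of Lemma \ref{lem:braidassign}, together with the observation that $2$-isotopic isotopies receive equal braids; once this is in place the remaining verifications are routine applications of the coherence theorem, and parts (1) and (3) follow mutatis mutandis from the same argument with $B_n^{\mathrm{cyl}}$ replaced by the trivial group and by $S_n$ respectively.
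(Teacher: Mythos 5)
Your proof takes essentially the same route as the paper: you observe that the forward round trip on pairs is the identity on the nose, use $3$-coskeletalness of $\nerve^\tensor(\Rex)$ to reduce to specifying $\eta$ on low-dimensional simplices, define the comparison $F(f)\cong G(f)$ via $F(\gamma_f)$ for an isotopy $\gamma_f$ from $f$ to a normal-form composite $f_0$ of generating operations, and verify the $3$-simplex compatibilities. Two points need tightening. First, $F(\gamma_f)$ lands in $F(f_0)$, not in $G(f_0)=G(f)$, so your $\eta_f$ must be postcomposed with the pseudofunctoriality isomorphism $F(f_0)\cong G(f_0)$ of Lemma \ref{pseudofun}.5, as the paper does explicitly when defining $\eta^G(\id_f)$. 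Second, the claim that every $3$-simplex check reduces to an equality of parallel structural isomorphisms in $(\catA,\catM)$ overstates the role of the coherence theorem: Equation \eqref{meq1} involves only $F$-images of isotopies, which are arbitrary natural isomorphisms rather than structural isomorphisms of $(\catA,\catM)$, and it holds directly because $F$ satisfies condition (iii) of Definition-Proposition \ref{rexalgdef}; the mixed Equations \eqref{meq2} and \eqref{meq3} are the ones where Theorem \ref{braidcoh} enters, and even there one must first transport the $F$-data to $G$-data across the $\eta$ components, a step you gesture at via Lemma \ref{pseudofun} but do not carry out.
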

\begin{proof}
	The three proofs are identical, so we will prove all three statements at once.
	First observe that the composite assignment 
	\[(\catA,\catM) \mapsto F_{(\catA,\catM)} \mapsto (\catA_{F_{(\catA,\catM)}},\catM_{F_{(\catA,\catM)}}) \]
	by construction gives back $(\catA,\catM)$ on the nose. 
	Conversely, let us show that a $\ztd_n$-algebra $F$ is isomorphic to the $\ztd_n$-algebra $F_{(\catA_F,\catM_F)}$ as objects in $\mathrm{Alg}_{\ztd_n}(\Rex)$. We will abbreviate $F_{(\catA_F,\catM_F)}$ by $G$.  
	Since $\nerve^{\tensor}(\Rex)$ is 3-coskeletal, we find that a morphism $\eta: F \rightarrow G$ is completely determined by specifying:
	\begin{itemize}
		\item for each 1-simplex $f: X \rightarrow Y$ in $N^{\tensor}(\ztd_n)$ a 1-simplex $\eta(f): F(X) \rightarrow G(Y)$ in $N^{\tensor}(\Rex)$,
		\item for each 2-simplex $\alpha: h \simeq f\circ g$ in $N^{\tensor}(\ztd_n)$ 2-simplices 
		\[\eta^F(\alpha): \eta(h) \Rightarrow \eta(g) \circ F(f), \quad \eta^G(\alpha): \eta(h) \Rightarrow G(g) \circ \eta(f)\]
		in $N^{\tensor}(\Rex)$,
		\item so that for every 3-simplex in $N^{\tensor}(\ztd_n)$, in the notation of Definition-Proposition \ref{geometricnerve}, Equations \eqref{meq1} to \eqref{meq3} hold.
	\end{itemize}
	\begin{align}
	(F(\delta) * \id) \circ \eta^F (\gamma) = (\id * \eta^F(\beta))\circ \eta^F(\alpha), \label{meq1}\\
	(\eta^F(\delta) * \id) \circ \eta^G(\gamma) = (\id * \eta^G(\beta)) \circ \eta^F(\alpha), \label{meq2}\\
	(G (\delta) * \id) \circ \eta^G(\gamma) = (\id * \eta^G(\beta)) \circ \eta^G(\alpha). \label{meq3}
	\end{align}
	By Property (2) in Proposition \ref{infoperad} it suffices to define $\eta$ on operations and $\eta^F,\eta^G$ on isotopies between operations. 
	We set $\eta(f) := F(f)$ for all operations, and set $\eta^F(\alpha) := F(\alpha)$ for isotopies. 
	To define $\eta^G(\id_f)$ we must construct a natural isomorphism $F(f) \Rightarrow G(f)$.
	The operation $f$ lies in the contractible fiber of one of the projections 
	\begin{align*}
	&\ztd_n(\DD^{\amalg k} \amalg \DD_*^{\amalg m},\DD) \xrightarrow{\ref{htype}} F_k[\DD/\Z_2] \xrightarrow{\ref{assdef1}} \{0,1\}^{\times k} \times S_k,\\
	&\ztd_n(\DD^{\amalg k} \amalg \DD_*^{\amalg m},\DD_*) \xrightarrow{\ref{htype}} F_k[\DD_*/\Z_2] \xrightarrow{\ref{assdef2}} \{0,1\}^{\times k} \times S_k,\end{align*}
	and note that there exists an operation $f_0$ in the fiber that is a horizontal/vertical composite of the fixed operations $f_\tensor$, $f_\act$, $f_\Phi$, $f_{\1}$, $f_{\1_\catM}$ of Proposition \ref{prop:1rn}  (or \ref{prop:2rn} or \ref{prop:nrn}). Let us choose a path in the fiber from $f$ to $f_0$. This defines an isotopy $\gamma_f: f \simeq f_0$ which we view as a 1-simplex in $\nerve^{\tensor}(\ztd_n)$.
	We define $\eta^G (\id_f)$ to be the composite natural isomorphism
	\[ F (f) \xrightarrow{F(\gamma_f)} F(f_0) \xrightarrow{\ref{pseudofun}.5} G(f_0) = G(f).\]
	More generally, we define $\eta^G$ for an isotopy $\alpha: h \simeq g\circ f$ as the horizontal composition 
	\[\eta^G(\alpha) := (\eta^G(\id_g)*\id_{F(f)} )\circ F(\alpha).\]
	It remains to show $\eta$ defines an isomorphism. 
	By writing out the definitions of $G$ and $\eta$ in terms of $F$ one shows Equations \eqref{meq1} to \eqref{meq3} hold. For example, Equation \eqref{meq1} follows immediately from functoriality of $F$. For the details see \cite{thesis}.
	To show that $\eta$ is an isomorphism it suffices to show its components $\eta(\id_X)$ for all $X \in \nerve^{\tensor}(\ztd_n)$ are equivalences \cite[Theorem C]{joyalnotes}. 
	This clearly holds as $\eta (\id_X) = \id_{F(X)}$.
\end{proof}

\end{document}